\numberwithin{equation}{section}
\newtheorem{thm}{Theorem}[section]
\newtheorem{cor}{Corollary}[section]
\newtheorem{rem}{Remark}[section]
\newtheorem{pro}{Proposition}[section]
\newtheorem{lemma}{Lemma}[section]
\begin{document}
	\markboth{R. Rajkumar and T. Anitha}{}
	\title{Some results on the reduced power graph of a group}
	
	\author {R. Rajkumar\footnote{e-mail: {\tt rrajmaths@yahoo.co.in}}~ and T. Anitha\footnote{e-mail: {\tt tanitha.maths@gmail.com}, }
		\\ \small \it Department of Mathematics,
		\small \it The Gandhigram Rural Institute-Deemed to be University,\\
		\small \it Gandhigram--624 302, Tamil Nadu, India.\\
		}
\date{}
	\maketitle
	
\begin{abstract}
	
The  reduced power graph  $\mathcal{RP}(G)$ of a group $G$ is the graph with vertex set $G$ and two vertices $u$ and $v$ are adjacent if and only if  $\left\langle v\right\rangle \subset \left\langle u \right\rangle $ or $\left\langle u\right\rangle \subset \left\langle v \right\rangle $. The proper reduced power graph $\mathcal{RP}^*(G)$ of  $G$ is the subgraph of $\mathcal{RP}(G)$ induced on $G\setminus \{e\}$. In this paper,  
we classify the finite groups whose reduced power graph (resp. proper reduced power graph) is one of complete $k$-partite, acyclic, triangle free or claw-free (resp. complete $k$-partite, acyclic, triangle free, claw-free, star or tree). In addition, we obtain the clique number and the chromatic number of $\mathcal{RP}(G)$ and $\mathcal{RP}^*(G)$ for any torsion group $G$. Also, for a finite group $G$, we determine the girth of $\mathcal{RP}^*(G)$.  Further, we discuss the cut vertices, cut edges and perfectness of these graphs. Then we investigate the connectivity, the independence number and the Hamiltonicity of the reduced power graph
(resp. proper reduced power graph) of some class of groups. Finally, we determine the number of components and the diameter of $\mathcal{RP}^*(G)$ for any finite group $G$. 

\paragraph{Keywords:} Reduced power graph, Groups, Clique number, Chromatic number, Independence number, Connectivity,  Diameter.\\
\textbf{2010 Mathematics Subject Classification:}  05C25. 05C15. 05C17. 05C40.
\end{abstract}

\section{Introduction}
Algebraic graph theory deals with the interplay between algebra and graph theory.  The investigation of the algebraic properties of groups or rings by using the study of a suitably associated graph is a useful technique. One of the graph that has attracted considerable attention of the researchers is the power graph associated with a group (semigroup). Kelarev and Quinn in  \cite{Kelarev DPG 2002} introduced the directed power graph of a semigroup. Later,  Chakrabarty et al. in \cite{Chakrabarty PG 2009} defined the undirected power graph of a semigroup.  The \textit{directed power graph} $\overrightarrow{\mathcal{P}}(S)$ of a semigroup $S$, is a digraph with vertex set $S$, and for $u,v\in S$, there is an arc from $u$ to $v$ if and only if $u\neq v$ and $v=u^n$ for some integer $n$, or equivalently  $u\neq v$ and $\langle v \rangle \subseteq \langle u \rangle$. The \textit{undirected power graph} $\mathcal{P}(S)$ of $S$  is the underlying graph of $\overrightarrow{\mathcal{P}}(S)$.  Given a group $G$, the \textit{directed proper power graph}  $\overrightarrow{\mathcal{P}}^*(G)$  \textit{of $G$} (resp. \textit{(undirected) proper power graph} $\mathcal{P}^*(G)$ \textit{of $G$}) is defined as the subdigraph of $\overrightarrow{\mathcal{P}}(G)$ (resp. the subgraph of $\mathcal{P}(G)$) induced on $G \setminus \{e\}$, where $e$ denotes the identity element of $G$.  Lots of research has been made on the study of power graphs of groups and semigroups and the results obtained have shown the significance of these graphs.  We refer the reader to  the survey paper \cite{survey PG} for the detailed research on the  power graphs  associated to groups and semigroups and \cite{curtin 2015,doostabadi PG 2015,doost,moghaddamfar PG 2014,ramesh 2018, Xma} for some recent results in this direction.

In order to reduce the complexity of the arcs and edges involved in the above graphs, in \cite{raj RPG 2017}, the authors defined the following graphs: Let $S$ be a semigroup. The \textit{directed reduced power graph} $\overrightarrow{\mathcal{RP}}(S)$  \textit{of $S$}  is the digraph with vertex set $S$, and for $u,v\in S$, there is an arc from $u$ to $v$ if and only if  $v=u^n$ for some integer $n$ and $\langle v \rangle \neq \langle u \rangle$, or equivalently  $\langle v \rangle \subset \langle u \rangle$. The \textit{(undirected) reduced power graph} $\mathcal{RP}(S)$ \textit{of $S$}  is the underlying graph of $\overrightarrow{\mathcal{RP}}(S)$. That is, the vertex set of $\mathcal{RP}(S)$ is $S$ and two vertices $u$ and $v$ are adjacent if and only if   $\langle v \rangle \subset \langle u \rangle$ or $\langle u \rangle \subset \langle v \rangle$. Given a group $G$,  the \textit{directed proper reduced power graph} $\overrightarrow{\mathcal{RP}}^*(G)$ \textit{of $G$} (resp. \textit{(undirected) proper reduced power graph $\mathcal{RP}^*(G)$ \textit{of $G$}}) is defined as the subdigraph of $\overrightarrow{\mathcal{RP}}(G)$ (resp. the subgraph of $\mathcal{RP}(G)$) induced on $G \setminus \{e\}$.

Clearly, $\overrightarrow{\mathcal{RP}}(G)$ is a spanning subdigraph of $\overrightarrow{\mathcal{P}}(G)$.  It is proved in \cite{raj RPG 2017} that the directed reduced power graph  of a group $G$ can determine its set of elements orders and consequently, some class of groups were determined by their directed reduced power graphs. This shows the importance of these graphs to the theory of groups. Moreover, some results on the characterization of groups whose  reduced power graph (resp. proper reduced power graph) is one of the following: complete graph, path, star, tree, bipartite, triangle-free (resp. complete graph, totally disconnected, path, star) were also  established.


The main objective of this paper is to explore some more graph theoretic properties of the (proper) reduced power graphs  of groups. 

The rest of the paper is organized as follows: 
In Section 2,  we explore the structure of the reduced power graph and the proper reduced power graph of a group. We show that the reduced power graph of a torsion group (resp. proper reduced power graph) is $(\Omega(n)+1)$--partite (resp. $\Omega(n)$--partite). In addition, we obtain the clique number, and the chromatic number of the reduced power graph (resp. proper reduced power graph) of any torsion group. As a consequence of this,  we show that among all the reduced power graphs (resp. proper reduced power graphs) of finite groups of given order $n$, $\mathcal{RP}(\mathbb Z_n)$ (resp. $\mathcal{RP}^*(\mathbb Z_n))$ attains the maximum clique number and the maximum chromatic number. Then we classify the groups whose reduced power graphs (resp. proper reduced power graphs) is one of complete $k$-partite or triangle-free (resp. complete k-partite, or triangle-free, star or tree). Also, for a finite group $G$, we determine the girth of $\mathcal{RP}^*(G)$.   

In Section 3, we show that for any finite group $G$, $\mathcal{RP}(G)$ and $\mathcal{RP}^*(G)$ are perfect and not a cycle. Also we investigate the acyclicity and claw-freeness of $\mathcal{RP}(G)$ and $\mathcal{RP}^*(G)$ for a finite group $G$.

In Section 4, we discuss the cut vertices and cut edges of $\mathcal{RP}(G)$ and $\mathcal{RP}^*(G)$ for a finite group $G$.

In Section 5, we estimate the connectivity and the independence number of reduced power graph
(resp. proper reduced power graph) of the following groups: finite cyclic groups, dihedral groups, quaternion groups,
semi-dihedral groups and finite $p$-groups. Also we study the Hamiltonicity of these graphs.

In Section 6, we determine the number of components of proper reduced power graph of a finite group and express this in terms of the number of components of proper power graph of the corresponding group. Finally, we obtain the diameter of the (proper) reduced power graph of any finite group.

Though out this paper, we denote the set of all prime numbers by $\mathbb P$. We use the standard notations and terminologies of graph theory following~\cite{balakrishnan GT book,Harary 1969}.

\section{$k$-partiteness, Clique number, Chromatic number, Girth}

For a given graph $\Gamma$,  $deg_{\Gamma}(v)$ denotes the degree of  a vertex $v$ in $\Gamma$. The girth $gr(\Gamma)$ of $\Gamma$ is the length of shortest cycle in $\Gamma$, if it exist;
otherwise, we define $gr(\Gamma) = \infty$. The clique number $\omega(\Gamma)$ of $\Gamma$ is the maximum size of a complete graph in $\Gamma$. The chromatic number $\chi(\Gamma)$ of $\Gamma$ is the minimum number of colors needed to color the vertices of $\Gamma$ such that no two adjacent vertices gets the same color. If $\omega(\Gamma)=\chi(\Gamma)$, then $\Gamma$ is said to be weakly $\chi$-perfect. $K_n$ denotes the complete graph on $n$ vertices. $P_n$ and $C_n$, respectively denotes the path and cycle of length $n$. The complete $r$-partite graph having partition sizes $n_1,n_2,\ldots, n_r$ is denoted by $K_{n_1,n_2,\ldots, n_r}$.

First, we start to investigate the  structure of the proper reduced power graph of a group. 

Let $G$ be a torsion group. The order of an element $x$ in  $G$ is denoted by $o(x)$.
Note that, if two vertices $x$ and $y$ in $\mathcal{RP}^*(G)$ (resp. $\mathcal{RP}(G)$) are adjacent, then $o(x)\mid o(y)$ or $o(y)\mid o(x)$ according as $\langle x \rangle \subset \langle y \rangle$ or $\langle y \rangle \subset \langle x \rangle$. We frequently use the contrapositive of this to show the non adjacency of two vertices in  $\mathcal{RP}^*(G)$ (resp. $\mathcal{RP}(G)$). 
Let $\pi_e(G)$ denotes the set of elements orders of $G$. Let $n\in \pi_e{(G)}$ be such that $\Omega(n)$ is maximum, where $\Omega(n)$ denotes the number of prime factors of $n$ counted with multiplicity. For each $i=1,2,\ldots,\Omega(n)$,
let
\begin{align}\label{partion name}
X_i:=\{x\in G\mid \Omega(o(x))=i\},
\end{align}	 Clearly, $\{X_i\}_{i=1}^{\Omega(n)}$ is a partition of the vertex set of $\mathcal{RP}^*(G)$. Also for any $x_1,x_2\in X_i$, either $o(x_1)=o(x_2)$ or $o(x_1)$ and $o(x_2)$ does not divide each other. Consequently, no two elements in the same partition are  adjacent in $\mathcal{RP}^*(G)$. Let $x\in G$ be such that $\Omega(o(x))=\Omega(n)$. Let $o(x)=p_1^{\alpha_1}p_2^{\alpha_2}\ldots p_k^{\alpha_k}$, where $p_i$'s are distinct primes, $\alpha_i\geq 1$ for all $i$. Then for each $i=1,2,\ldots,\Omega(n)$, $X_i$ has the elements in $\left\langle x\right\rangle $ whose order is $p_1^{\beta_1}p_2^{\beta_2}\ldots p_k^{\beta_k}$ with $0\leq \beta_j\leq \alpha_j$ and $\beta_1+\beta_2+\cdots +\beta_k=i$. For each $i=1,2,\ldots,\Omega(n)$, we choose $x_i\in \left\langle x \right\rangle $ as follows: First, we fix an element $x_1$ in $X_1$ of order $q_1$, where $q_1\in \{p_1,p_2,\ldots,p_k\}$. If $\Omega(n)=1$, then we stop the procedure. If $\Omega(n)\geq 2$, then we choose an element $x_2$ in $X_2$ of order $q_1q_2$, where $q_2\in \{p_1,p_2,\ldots,p_k\}$ such that $q_1q_2$ divides $o(x)$. Proceeding in this way, at the $i^{th}$ stage $(i=3,4,\ldots, \Omega(n))$, we choose an element $x_i$ in $X_i$ of order $q_1q_2\ldots q_i$, where $q_i\in \{p_1p_2\ldots,p_k\}$ such that $q_1q_2\ldots q_i$ divides $o(x)$. Then $\left\langle x_r\right\rangle \subset \left\langle x_s\right\rangle $, where $1\leq r<s\leq \Omega(n)$ and so $x_1,x_2,\ldots,x_{\Omega(n)}$ forms a clique in  $\mathcal{RP}^*(G)$. Also, since $\Omega(n)$ is maximum, it follows that $\Omega(n)$ is the minimal number such that $\mathcal{RP}^*(G)$ is a $\Omega(n)$-partite graph and so $\omega(\mathcal{RP}^*(G))=\Omega(n)$. This implies that $\chi(\mathcal{RP}^*(G))=\Omega(n)$.

The above procedure is illustrated in Figure~\ref{dihedral_24}, which   shows the structure of $\mathcal{RP}^*(D_{24})$, where $a$ and $b$ are the generators of $D_{24}$.
\begin{figure}[ht!]
	\begin{center}
		\includegraphics [scale=0.8] {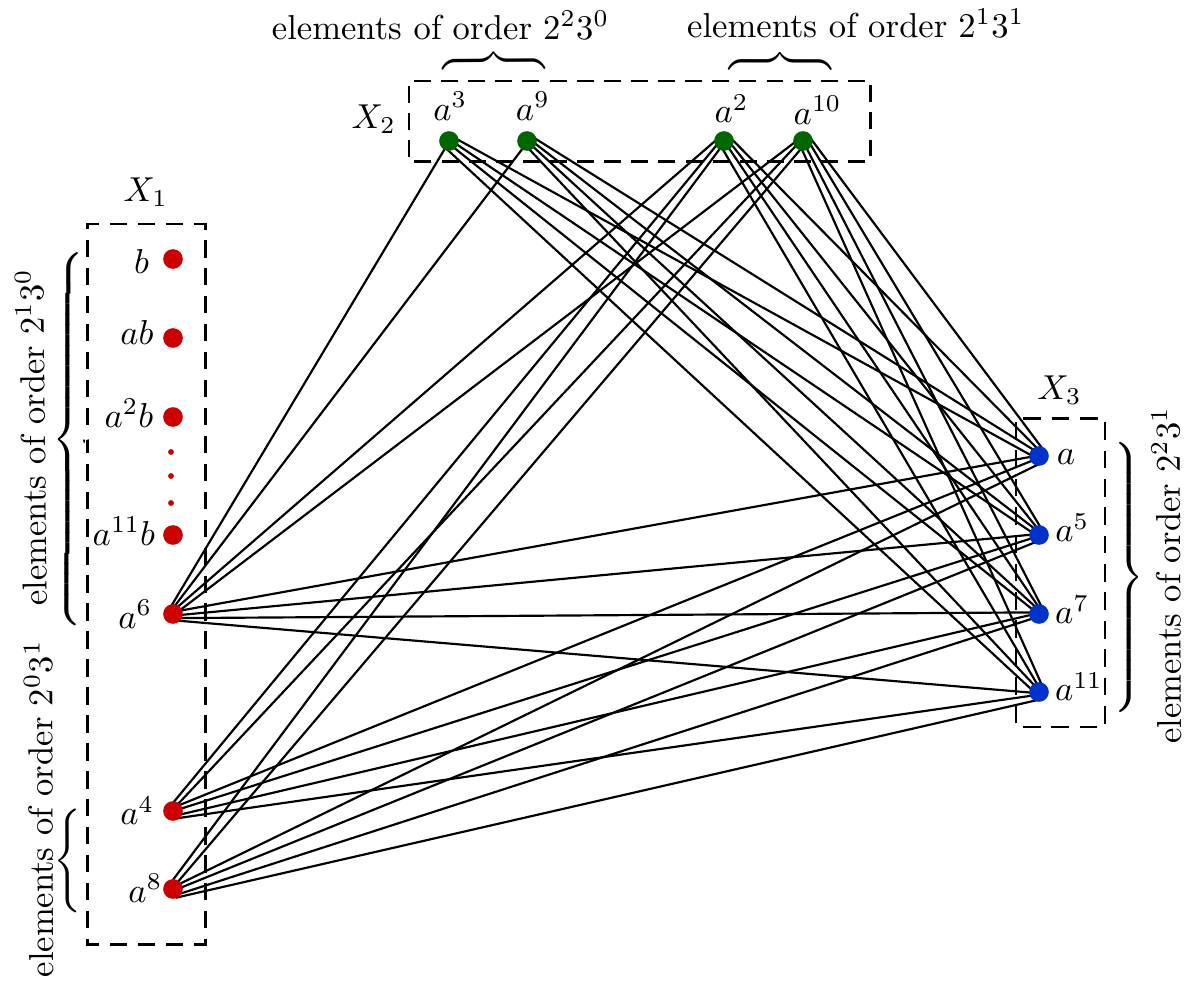}
		\caption{The structure of $\mathcal{RP}^*(D_{24})$} \label{dihedral_24}
	\end{center}
\end{figure}

The above facts are summarized in the following result.
\begin{thm}\label{partite}
	Let $G$ be a torsion group and let $n\in \pi_e{(G)}$ be such that $\Omega(n)$ is maximum. Then
	\begin{enumerate}[\normalfont(1)]
		\item $\mathcal{RP}^*(G)$ is $\Omega(n)$--partite;
		\item $\omega(\mathcal{RP}^*(G))=\Omega(n)=\chi(\mathcal{RP}^*(G))$; and so $\mathcal{RP}^*(G)$ is weakly $\chi$-perfect.	\end{enumerate}
\end{thm}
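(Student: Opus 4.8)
The plan is to use the partition $\{X_i\}_{i=1}^{\Omega(n)}$ introduced above, where $X_i=\{x\in G\mid \Omega(o(x))=i\}$, as the witness that $\mathcal{RP}^*(G)$ is $\Omega(n)$-partite, and then to exhibit an explicit clique of size $\Omega(n)$, so that the chain of inequalities $\omega(\mathcal{RP}^*(G))\le \chi(\mathcal{RP}^*(G))\le \Omega(n)\le \omega(\mathcal{RP}^*(G))$ collapses to equality.

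First I would check that $\{X_i\}_{i=1}^{\Omega(n)}$ really partitions the vertex set $G\setminus\{e\}$ of $\mathcal{RP}^*(G)$: every non-identity element of the torsion group $G$ has finite order at least $2$, so $1\le \Omega(o(x))\le \Omega(n)$ by maximality of $\Omega(n)$, and $x$ lies in exactly one $X_i$. Next, each $X_i$ is an independent set: if $x,y\in X_i$ were adjacent in $\mathcal{RP}^*(G)$, then, say, $\langle x\rangle\subset\langle y\rangle$ is a proper containment of cyclic subgroups, which forces $o(x)$ to be a proper divisor of $o(y)$ and hence $\Omega(o(x))<\Omega(o(y))$, contradicting $x,y\in X_i$. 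This establishes part~(1) and also gives $\chi(\mathcal{RP}^*(G))\le \Omega(n)$ by assigning colour $i$ to every vertex of $X_i$.

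For the lower bound I would fix $x\in G$ with $o(x)=n$ (such an $x$ exists since $n\in\pi_e(G)$), write $n=p_1^{\alpha_1}\cdots p_k^{\alpha_k}$ with $\alpha_1+\cdots+\alpha_k=\Omega(n)$, and build a divisor chain $1=d_0\mid d_1\mid\cdots\mid d_{\Omega(n)}=n$ in which each quotient $d_i/d_{i-1}$ is a single prime, obtained by peeling off the prime factors of $n$ one at a time; then $\Omega(d_i)=i$ for each $i$. Since $\langle x\rangle$ is cyclic, for each $i\ge 1$ it contains a unique subgroup of order $d_i$; let $x_i$ be a generator of it. Then $\langle x_1\rangle\subset\langle x_2\rangle\subset\cdots\subset\langle x_{\Omega(n)}\rangle=\langle x\rangle$ with all containments proper, so $x_1,\ldots,x_{\Omega(n)}$ are pairwise adjacent in $\mathcal{RP}^*(G)$ and form a clique, whence $\omega(\mathcal{RP}^*(G))\ge\Omega(n)$.

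Combining the two bounds, $\Omega(n)\le\omega(\mathcal{RP}^*(G))\le\chi(\mathcal{RP}^*(G))\le\Omega(n)$, so all three equal $\Omega(n)$, which is part~(2); weak $\chi$-perfectness is then immediate from the definition. The only place that requires a little care is the clique construction, namely that a divisor chain of $n$ along which $\Omega$ increases by exactly one always exists, and that passing to the unique subgroups of these orders inside $\langle x\rangle$ yields a strictly increasing chain of cyclic subgroups; both are routine facts about cyclic groups, so I do not expect a genuine obstacle.
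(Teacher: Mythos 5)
Your proposal is correct and follows essentially the same route as the paper: the paper also partitions $G\setminus\{e\}$ into the sets $X_i=\{x\in G\mid \Omega(o(x))=i\}$, observes that each part is independent, and builds a clique of size $\Omega(n)$ by choosing elements of orders $q_1, q_1q_2,\ldots,q_1q_2\cdots q_{\Omega(n)}$ inside $\langle x\rangle$ with $o(x)=n$, which is exactly your divisor-chain construction.
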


\begin{cor}\label{partite2}
	Let $G$ be a torsion group and let $n\in \pi_e{(G)}$ be such that $\Omega(n)$ is maximum. Then
	\begin{enumerate}[\normalfont(1)]
		\item $\mathcal{RP}(G)$ is $\Omega(n)+1$--partite;
		\item $\omega(\mathcal{RP}(G))=\Omega(n)+1=\chi(\mathcal{RP}(G))$; and so $\mathcal{RP}(G)$ is weakly $\chi$-perfect.
	\end{enumerate}
\end{cor}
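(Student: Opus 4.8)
The plan is to deduce everything from Theorem~\ref{partite} by exploiting the one structural difference between $\mathcal{RP}(G)$ and $\mathcal{RP}^*(G)$, namely the identity vertex $e$. Since $\langle e\rangle=\{e\}$ is a proper subgroup of $\langle u\rangle$ for every $u\in G\setminus\{e\}$ (here we use that $G$ is torsion, so $\langle u\rangle$ is a nontrivial finite cyclic group), the vertex $e$ is adjacent in $\mathcal{RP}(G)$ to every other vertex. In other words, $\mathcal{RP}(G)$ is obtained from $\mathcal{RP}^*(G)$ by adjoining one universal vertex (the join with $K_1$), and the proof is then pure bookkeeping.

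For part~(1), I would take the partition $\{X_i\}_{i=1}^{\Omega(n)}$ of $V(\mathcal{RP}^*(G))$ introduced before Theorem~\ref{partite}; each $X_i$ is an independent set of $\mathcal{RP}^*(G)$, and it stays independent in $\mathcal{RP}(G)$ because $e\notin X_i$. Adjoining the singleton part $\{e\}$ (trivially independent) exhibits $\{X_1,\ldots,X_{\Omega(n)},\{e\}\}$ as a partition of $V(\mathcal{RP}(G))$ into $\Omega(n)+1$ independent sets, so $\mathcal{RP}(G)$ is $(\Omega(n)+1)$-partite; this partition also gives $\chi(\mathcal{RP}(G))\le\Omega(n)+1$. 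That $\Omega(n)+1$ is the least possible number of parts will follow from part~(2), since a $k$-partite graph has clique number at most $k$.

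For part~(2), I would first bound $\omega(\mathcal{RP}(G))$ by a short case distinction on a maximum clique $C$: if $e\notin C$, then $C$ is a clique of $\mathcal{RP}^*(G)$, so $|C|\le\omega(\mathcal{RP}^*(G))=\Omega(n)$ by Theorem~\ref{partite}; if $e\in C$, then $C\setminus\{e\}$ is a clique of $\mathcal{RP}^*(G)$, so $|C|\le\Omega(n)+1$. Conversely, appending $e$ to the $\Omega(n)$-clique $\{x_1,\ldots,x_{\Omega(n)}\}$ constructed explicitly before Theorem~\ref{partite} produces, by universality of $e$, a clique of size $\Omega(n)+1$; hence $\omega(\mathcal{RP}(G))=\Omega(n)+1$. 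Chaining the general inequality $\omega\le\chi$ with the upper estimate from part~(1) yields $\Omega(n)+1=\omega(\mathcal{RP}(G))\le\chi(\mathcal{RP}(G))\le\Omega(n)+1$, so $\chi(\mathcal{RP}(G))=\Omega(n)+1=\omega(\mathcal{RP}(G))$ and $\mathcal{RP}(G)$ is weakly $\chi$-perfect.

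There is no substantive obstacle here; the only point deserving a line of care is the degenerate case of the trivial group, where $n=1$, $\Omega(n)=0$, there are no parts $X_i$, and $\mathcal{RP}(G)=K_1$ --- but then all three quantities equal $1=\Omega(n)+1$ and every assertion still holds.
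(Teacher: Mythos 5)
Your proposal is correct and is essentially the argument the paper intends: the corollary is stated without a separate proof precisely because $\mathcal{RP}(G)=K_1+\mathcal{RP}^*(G)$ (the identity is a universal vertex), so each of the partite number, clique number and chromatic number of Theorem~\ref{partite} increases by exactly one. Your bookkeeping of the partition $\{X_1,\ldots,X_{\Omega(n)},\{e\}\}$, the clique $\{x_1,\ldots,x_{\Omega(n)},e\}$, and the chain $\omega\le\chi\le\Omega(n)+1$ matches the paper's implicit reasoning.
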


In the next result, we show that among all the reduced power graphs (resp. proper reduced power graphs) of finite groups of given order $n$, $\mathcal{RP}(\mathbb Z_n)$ (resp. $\mathcal{RP}^*(\mathbb Z_n))$ attains the maximum clique number and the maximum chromatic number.
\begin{cor}
	Let $G$ be a non-cyclic group of order $n$. Then $\omega(\mathcal{RP}(G))< \omega(\mathcal{RP}(\mathbb Z_n))$ and $\chi(\mathcal{RP}(G))< \chi(\mathcal{RP}(\mathbb Z_n))$. Also $\omega(\mathcal{RP}^*(G))< \omega(\mathcal{RP}^*(\mathbb Z_n))$ and $\chi(\mathcal{RP}^*(G))< \chi(\mathcal{RP}^*(\mathbb Z_n))$.
\end{cor}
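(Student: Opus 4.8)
The plan is to reduce everything to the equalities $\omega=\chi=\Omega(\cdot)$ (with the additive constant $1$ in the non-proper case) supplied by Theorem~\ref{partite} and Corollary~\ref{partite2}, and then simply to compare the relevant element orders. Throughout, I use the elementary fact that $\Omega$ is monotone with respect to divisibility: if $d\mid m$ then the multiset of prime factors of $d$ is contained in that of $m$, so $\Omega(d)\le\Omega(m)$, with equality iff $d=m$.

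First I would pin down the cyclic case. Every element order of $\mathbb Z_n$ divides $n$, and $n\in\pi_e(\mathbb Z_n)$; by the monotonicity of $\Omega$, the order $n$ itself is an element of $\pi_e(\mathbb Z_n)$ with $\Omega$ maximum. Hence Theorem~\ref{partite} and Corollary~\ref{partite2} give
$\omega(\mathcal{RP}^*(\mathbb Z_n))=\chi(\mathcal{RP}^*(\mathbb Z_n))=\Omega(n)$ and $\omega(\mathcal{RP}(\mathbb Z_n))=\chi(\mathcal{RP}(\mathbb Z_n))=\Omega(n)+1$.

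Next I would treat a non-cyclic group $G$ of order $n$. By Lagrange's theorem every element order of $G$ divides $n$, and no element of $G$ has order $n$, since otherwise $G$ would be generated by that element and hence cyclic. Thus every $m\in\pi_e(G)$ is a \emph{proper} divisor of $n$, so $n=mk$ for some integer $k>1$, giving $\Omega(n)=\Omega(m)+\Omega(k)\ge\Omega(m)+1$. In particular, choosing $m_0\in\pi_e(G)$ with $\Omega(m_0)$ maximum, we get $\Omega(m_0)\le\Omega(n)-1<\Omega(n)$.

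Finally I would combine the two parts via Theorem~\ref{partite} and Corollary~\ref{partite2}: $\omega(\mathcal{RP}^*(G))=\Omega(m_0)<\Omega(n)=\omega(\mathcal{RP}^*(\mathbb Z_n))$ and $\omega(\mathcal{RP}(G))=\Omega(m_0)+1<\Omega(n)+1=\omega(\mathcal{RP}(\mathbb Z_n))$, and since the chromatic number equals the clique number in all four graphs, the same strict inequalities hold for $\chi$. There is no real obstacle in this argument; the only point that genuinely needs to be kept in mind is that the clique (and chromatic) number of a reduced power graph is controlled by the element order carrying the most prime factors counted with multiplicity, not by the largest element order, and that Lagrange's theorem forces this quantity strictly down as soon as the group fails to be cyclic.
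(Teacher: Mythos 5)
Your proposal is correct and follows essentially the same route as the paper: pick $m\in\pi_e(G)$ with $\Omega(m)$ maximum, note that $m$ is a proper divisor of $n$ because $G$ is non-cyclic, conclude $\Omega(m)<\Omega(n)$, and apply Theorem~\ref{partite}(2) and Corollary~\ref{partite2}. You merely spell out two details the paper leaves implicit (that the maximum of $\Omega$ over $\pi_e(\mathbb Z_n)$ is attained at $n$, and that proper divisibility strictly decreases $\Omega$), which is fine.
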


\begin{proof}
	Let $m \in \pi_e(G)$ be such that $\Omega(m)$ is maximum. Then $m\mid n$ and since $G$ is non-cyclic, which forces $m\neq n$. This implies that $\Omega(m)< \Omega(n)$. By using this fact together with part (2) of Theorem~\ref{partite} and Corollary~\ref{partite2}, we get the result.
\end{proof}

The classification of finite groups for which the reduced power graph is given in \cite[Corollary 2.11]{raj RPG 2017}. As a consequence of Theorem~\ref{partite}(1), in the next result, we characterize the torsion groups whose proper reduced power graph is bipartite.

\begin{cor}
	Let $G$ be a torsion group. Then $\mathcal{RP}^*(G)$ is bipartite if and only if $\pi_e(G)$ has a non-prime number which is of the form either $p^2$ or $pq$, where $p$, $q$ are distinct primes.
\end{cor}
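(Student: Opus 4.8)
The plan is to read this off from Theorem~\ref{partite}. The construction preceding that theorem does more than show $\mathcal{RP}^*(G)$ is $\Omega(n)$-partite: by exhibiting the clique $x_1,x_2,\dots,x_{\Omega(n)}$ it shows that $\Omega(n)$ is the least $k$ for which $\mathcal{RP}^*(G)$ is $k$-partite, and that $\omega(\mathcal{RP}^*(G))=\chi(\mathcal{RP}^*(G))=\Omega(n)$. Since a graph is bipartite precisely when its chromatic number is at most $2$, and since $\mathcal{RP}^*(G)$ is weakly $\chi$-perfect, $\mathcal{RP}^*(G)$ is bipartite if and only if $\Omega(n)\le 2$; and it carries at least one edge exactly when $\Omega(n)\ge 2$, because $\Omega(n)=1$ means every non-identity element of $G$ has prime order, so $X_1$ is the whole vertex set and $\mathcal{RP}^*(G)$ is edgeless (by the divisibility remark preceding Theorem~\ref{partite}, two elements of prime order are never adjacent). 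So the statement reduces to the arithmetic equivalence: $\Omega(n)=2$ if and only if $n=p^2$ for a prime $p$, or $n=pq$ for distinct primes $p,q$ — each of which is a non-prime number lying in $\pi_e(G)$ by the choice of $n$. (Here the hypothesis on $\pi_e(G)$ is read as the condition that the distinguished element $n$ of maximal $\Omega$-value is of the form $p^2$ or $pq$, equivalently that $\pi_e(G)$ contains a non-prime but no element order with three or more prime factors counted with multiplicity.)

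Concretely, for the forward implication I would argue: if $\mathcal{RP}^*(G)$ is bipartite then $\chi(\mathcal{RP}^*(G))\le 2$, so $\Omega(n)\le 2$ by Theorem~\ref{partite}(2); since the graph carries an edge, $\Omega(n)=2$. Writing $n=p_1^{\alpha_1}p_2^{\alpha_2}\cdots p_k^{\alpha_k}$ with $p_i$ distinct primes and $\alpha_1+\alpha_2+\cdots+\alpha_k=2$ leaves only $n=p^2$ or $n=pq$, both non-prime, and $n\in\pi_e(G)$. For the converse I would use that such an $n$ has $\Omega(n)=2$, whence $\chi(\mathcal{RP}^*(G))=\Omega(n)=2$ by Theorem~\ref{partite}(2), so $\mathcal{RP}^*(G)$ is bipartite.

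Since Theorem~\ref{partite} already supplies the partition, the clique, and the value of $\chi$, there is essentially no obstacle; the only points demanding a moment's attention are separating off the edgeless case $\Omega(n)=1$ (which is why one wants a \emph{non-prime} element of $\pi_e(G)$, not merely $\Omega(n)\le 2$) and keeping the arithmetic condition pinned to the maximal-$\Omega$ element of $\pi_e(G)$ rather than to an arbitrary element order, since having some element order of the form $p^2$ or $pq$ does not by itself rule out larger element orders.
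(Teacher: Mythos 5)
Your argument is correct and is exactly the route the paper intends: the corollary is stated there without proof as an immediate consequence of Theorem~\ref{partite}, which gives $\chi(\mathcal{RP}^*(G))=\Omega(n)$ for the element order $n$ of maximal $\Omega$-value, so bipartiteness (in the paper's sense of minimal $2$-partiteness, i.e.\ with at least one edge) is equivalent to $\Omega(n)=2$, i.e.\ $n=p^2$ or $n=pq$. Your parenthetical caveat---that the hypothesis must be pinned to the maximal-$\Omega$ element order, since for instance $\mathbb Z_8$ has $4\in\pi_e(\mathbb Z_8)$ yet $\mathcal{RP}^*(\mathbb Z_8)$ contains a triangle---is a correct and worthwhile clarification of the statement as literally written.
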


The following result is an immediate consequence of Theorem~\ref{partite}(2).

\begin{cor}\label{c3 free}
	Let $G$ be a torsion  group. Then
	
	\begin{enumerate}[\normalfont(1)]
		\item $\mathcal{RP}^*(G)$ is triangle-free if and only if $\Omega(n)\leq 2$ for every $n\in \pi_e(G)$;
		
		\item $\mathcal{RP}(G)$ is triangle-free if and only if every non-trivial element of $G$ is of prime order.
		
\end{enumerate} \end{cor}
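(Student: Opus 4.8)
The plan is to read off both statements directly from the clique-number formulas already proved, using the elementary fact that a graph is triangle-free if and only if it contains no $K_3$, i.e.\ its clique number is at most $2$. Since the excerpt has computed $\omega$ exactly for both graphs, each equivalence reduces to an arithmetic condition on $\Omega$ over $\pi_e(G)$.

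For part (1): by Theorem~\ref{partite}(2), $\omega(\mathcal{RP}^*(G))=\Omega(n)$, where $n\in\pi_e(G)$ is chosen so that $\Omega(n)$ is maximum. Thus $\mathcal{RP}^*(G)$ is triangle-free if and only if $\Omega(n)\le 2$; because $\Omega(n)$ is the maximal value of $\Omega$ over $\pi_e(G)$, this holds precisely when $\Omega(m)\le 2$ for every $m\in\pi_e(G)$. For part (2): by Corollary~\ref{partite2}(2), $\omega(\mathcal{RP}(G))=\Omega(n)+1$, so $\mathcal{RP}(G)$ is triangle-free if and only if $\Omega(n)+1\le 2$, i.e.\ $\Omega(n)\le 1$; again by maximality this is equivalent to $\Omega(m)\le 1$ for all $m\in\pi_e(G)$, which says that every element order is either $1$ or a prime, that is, every non-trivial element of $G$ has prime order.

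There is essentially no obstacle to overcome: the argument is a two-line deduction from the already-established values of $\omega$, and the only point worth stating explicitly is the (trivial) equivalence between ``the maximum of $\Omega$ over $\pi_e(G)$ is $\le k$'' and ``$\Omega$ is $\le k$ on all of $\pi_e(G)$''. One may also remark that the degenerate case $G=\{e\}$ causes no trouble, since then $\mathcal{RP}(G)$ and $\mathcal{RP}^*(G)$ are edgeless and both stated conditions hold vacuously.
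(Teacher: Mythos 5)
Your proposal is correct and is exactly the route the paper takes: the paper presents this corollary as an immediate consequence of the clique-number computations $\omega(\mathcal{RP}^*(G))=\Omega(n)$ and $\omega(\mathcal{RP}(G))=\Omega(n)+1$, combined with the observation that triangle-freeness means clique number at most $2$. Your write-up simply makes explicit the two-line deduction the paper leaves to the reader.
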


In the following results we classify all finite groups whose reduced power graph (resp. proper reduced power graph) is complete $k$-partite. 

\begin{thm}\label{complete partite main thm}
	Let $G$ be a finite group and $p,q$ be distinct primes. Then
	\begin{enumerate}[\normalfont(1)]
		\item $\mathcal{RP}^*(G)$ is complete bipartite if and only if $G\cong \mathbb Z_{p^2}$, $\mathbb Z_{pq}$ or $Q_8$;
		
		\item $\mathcal{RP}^*(G)$ is complete $k$-partite, where $k\geq 3$ if and only if $G\cong \mathbb Z_{p^k}$.
	\end{enumerate}
\end{thm}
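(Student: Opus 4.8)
The plan is to reduce everything to a group-theoretic statement via Theorem~\ref{partite}. By that result $\mathcal{RP}^*(G)$ is $\Omega(n)$-partite with parts $X_1,\dots,X_{\Omega(n)}$, all nonempty (an element of order $n$ has a power of every order dividing $n$), and $\omega(\mathcal{RP}^*(G))=\chi(\mathcal{RP}^*(G))=\Omega(n)$. Since a complete $k$-partite graph has clique number $k$ and its only partition into $k$ independent sets is into its parts, I would first record: $\mathcal{RP}^*(G)$ is complete $k$-partite if and only if $\Omega(n)=k$ and, for all $i<j$, every $x\in X_i$ is adjacent to every $y\in X_j$. Moreover, if $i<j$ then $o(y)\nmid o(x)$ (otherwise $\Omega(o(y))\le\Omega(o(x))$), so $\langle y\rangle\not\subseteq\langle x\rangle$, and hence adjacency of such $x,y$ is equivalent to $\langle x\rangle\subset\langle y\rangle$, i.e. $x\in\langle y\rangle$. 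With this characterization in hand the ``if'' directions are short checks: in $\mathbb Z_{p^k}$ the nontrivial subgroups form a chain, so $X_i$ is the set of elements of order $p^i$ and $\mathcal{RP}^*(\mathbb Z_{p^k})$ is complete $k$-partite; $\mathcal{RP}^*(\mathbb Z_{p^2})=K_{p-1,\,p^2-p}$ and $\mathcal{RP}^*(\mathbb Z_{pq})=K_{p+q-2,\,(p-1)(q-1)}$ are complete bipartite; and $\mathcal{RP}^*(Q_8)=K_{1,6}$ is complete bipartite.

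For the ``only if'' directions, suppose $\mathcal{RP}^*(G)$ is complete $k$-partite, so $\Omega(n)=k$; thus $n=p^k$ or (if $k=2$) $n=pq$ with $p\ne q$. I would first show that for $k\ge 3$, $n$ must be a prime power. If not, write $n=p^am$ with $p\nmid m$, $a\ge 1$, $m\ge 2$, and fix $x$ of order $n$; then $\langle x\rangle$ contains two elements in different parts that are non-adjacent: one of order $q$ (for some prime $q\mid m$) together with one of order $p^2$ when $a\ge 2$, or one of order $p$ together with one of order $q_1q_2$ when $a=1$ (here $\Omega(m)=k-1\ge 2$), and in each case the two orders are incomparable, contradicting the characterization. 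So for $k\ge 3$ we get $n=p^k$, while for $k=2$ both possibilities remain.

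Next I would extract the group structure. Applying the characterization with $y=x$ of order $n$: every element $u$ of prime order lies in $X_1$, hence (as $k\ge 2$) is adjacent to $x\in X_k$, hence $u\in\langle x\rangle$. If $n=p^k$, then $\langle x\rangle\cong\mathbb Z_{p^k}$ contains only elements of $p$-power order, so for any $g\in G$ and any prime $r\mid o(g)$ the element $g^{o(g)/r}$ (of order $r$) lies in $\langle x\rangle$, forcing $r=p$; thus $G$ is a $p$-group, and moreover every element of order $p$ lies in the unique subgroup of order $p$ of $\langle x\rangle$, so $G$ has a unique subgroup of order $p$. By the classical structure theorem, $G$ is then cyclic or (if $p=2$) generalized quaternion; since the exponent of $G$ equals $p^k$ (an element of order $p^{k+1}$ would contradict the maximality of $\Omega(n)$), either $G\cong\mathbb Z_{p^k}$ or $p=2$ and $G\cong Q_{2^{k+1}}$. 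To eliminate the latter when $k\ge 3$: an element $b$ of $Q_{2^{k+1}}$ outside the cyclic maximal subgroup $\langle a\rangle$ has order $4$, so $\Omega(o(b))=2<k=\Omega(o(a))$, yet $b\notin\langle a\rangle$ and $o(b)=4<2^k=o(a)$, so $b$ and $a$ are non-adjacent while lying in different parts --- impossible (this argument fails precisely at $k=2$, where $Q_8$ does occur). Finally, if $k=2$ and $n=pq$: all prime-order elements of $G$ lie in $\langle x\rangle\cong\mathbb Z_{pq}$, so they have order $p$ or $q$, and $G$ has a unique subgroup $P_0$ of order $p$ and a unique one $Q_0$ of order $q$ (both normal); an element of order $p^2$ (resp.\ $q^2$) would be in $X_2$ and non-adjacent to a prime-order element of the other type in $X_1$, so $\pi_e(G)=\{1,p,q,pq\}$; hence every element of $G$ lies in $P_0$, in $Q_0$, or in $\langle g\rangle$ for some $g$ of order $pq$, and since any such $\langle g\rangle$ contains both $P_0$ and $Q_0$ it equals $P_0Q_0$; therefore $G=P_0Q_0\cong\mathbb Z_{pq}$.

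I expect the last paragraph to be the crux: correctly translating the adjacency condition into ``$G$ is a $p$-group'' and ``$G$ has a unique subgroup of order $p$'', invoking the $p$-group structure theorem, and --- the delicate point --- arranging the argument so that generalized quaternion survives exactly when $k=2$ (yielding $Q_8$) and is excluded for every $k\ge 3$. The determination of $n$ is routine but must be organized so that no non-prime-power value is overlooked for $k\ge 3$.
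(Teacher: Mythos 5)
Your proof is correct; I checked the characterization lemma (for a complete $k$-partite graph with nonempty parts, the partition into $k$ independent sets is unique, so the parts must be the $X_i$'s and cross-part adjacency reduces to $x\in\langle y\rangle$), the determination of $n$, the derivation that $G$ is a $p$-group with a unique subgroup of order $p$ when $n=p^k$, the exponent argument, the elimination of $Q_{2^{k+1}}$ for $k\geq 3$, and the $n=pq$ case via the normal subgroups $P_0,Q_0$ --- all the steps hold. The route is genuinely organized differently from the paper's. The paper cases on the number $t$ of distinct primes dividing $|G|$: for $t\geq 2$ it exhibits explicit non-adjacent cross-part pairs to rule those cases out, and for $t=1$ it argues that $G$ must have a unique cyclic subgroup of order $p^i$ for \emph{every} $i<k$ and then cites Hall's Theorem 12.5.2 directly to get $\mathbb Z_{p^k}$ or $Q_8$ (the stronger uniqueness hypothesis excludes $Q_{2^m}$, $m\geq 4$, automatically, since those have several cyclic subgroups of order $4$). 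You instead let the arithmetic of the maximal-$\Omega$ element order $n$ drive the analysis and use the single observation that every prime-order element must lie in $\langle x\rangle$; this yields ``$G$ is a $p$-group'' as a \emph{consequence} of the graph condition (so the paper's Cases 2 and 3 never arise separately), but it gives you only a unique subgroup of order $p$, so you must supplement the cyclic-or-generalized-quaternion classification with the explicit non-adjacency of $a$ and $b$ in $Q_{2^{k+1}}$ to exclude large quaternion groups when $k\geq 3$. The paper's version is more hands-on and repeats similar non-adjacency arguments in each case; yours buys uniformity at the cost of one extra elimination step, and both ultimately rest on the same classification of $p$-groups with a unique minimal subgroup.
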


\begin{proof}
	By Theorem~\ref{partite}, $\mathcal{RP}^*(G)$ is $k$-partite, where $k=\Omega(n)$, $n\in \pi_e{(G)}$ is such that $\Omega(n)$ is maximum.
	Let $|G|$ has $t$ distinct prime divisors.
	
	\noindent\textbf{Case 1.}
	Let $t=1$.
	
	Then $|G|=p^n$, where $p$ is a prime, $n\geq 1$. Then $G$ has a maximal cyclic subgroup, say $H$, of order $p^k$ and so for each $i=1,2,\ldots, k$, the partition $X_i$ defined in~\eqref{partion name} has all the elements in $G$ of order $p^i$. If $G$ has more than one cyclic subgroup of order $p^i$ for some $i<k$, then no non-trivial element in at least one of these subgroups is adjacent with any non-trivial element in $H$ and so $\mathcal{RP}^*(G)$ is not complete $k$-partite. If $G$ has a unique cyclic subgroup of order $p^i$ for each $i=1,2,\ldots, k-1$, then by ~\cite[Theorem 12.5.2]{Marshall}, $G \cong \mathbb Z_{p^k}$ $(k\geq 2)$ or $Q_8$. If $G\cong \mathbb Z_{p^k}$, then $|X_i|=p^{i-1}(p-1)$ for each $i$. So
	\begin{align}\label{str of pm}
	\mathcal{RP}^*(\mathbb Z_{p^k})\cong K_{p-1,p(p-1),\ldots, p^{k-1}(p-1)}
	\end{align} Clearly, $\mathcal{RP}^*(Q_8)\cong K_{1,6}$
	
	\noindent\textbf{Case 2.} Let $t=2$.
	
	Then $|G|=p_1^{n_1}p_2^{n_2}$, where $p_1$, $p_2$ are distinct primes, $n_1,n_2\geq 1$.		
	If $G$ has an element of order $p_i^2$ for some $i=1,2$, then the elements in $X_2$ of order $p_i^2$ are not adjacent to any element in $X_1$ of order $p_j$ $(j\neq i)$ in $\mathcal{RP}^*(G)$, so $\mathcal{RP}^*(G)$ is not complete $k$-partite. If every non-trivial element in $G$ is of prime order, then by~\cite[Theorem 2.9]{raj RPG 2017}, $\mathcal{RP}^*(G)$ is totally disconnected, and so $\mathcal{RP}^*(G)$ is not complete $k$-partite.  Now we assume that $G$  has an element of order $p_1p_2$. If $G$ has more than one cyclic subgroup of any one of the orders $p_1$, $p_2$ or $p_1p_2$, then $\mathcal{RP}^*(G)$ is not complete $k$-partite. If $G$ has a unique subgroup of each of the orders $p_1$, $p_2$ and $p_1p_2$, then $G\cong \mathbb Z_{p_1p_2}$ and in this case $\mathcal{RP}^*(\mathbb Z_{p_1p_2})= K_{p_1+p_2-2,(p_1-1)(p_2-1)}$.
	
	\noindent\textbf{Case 3.} Let $t\geq 3$.
	
	If $G$ has an element of order $p_ip_jp_l$, where $p_i$, $p_j$, $p_l$ are distinct prime divisors of $|G|$, then the elements in $X_2$ of order $p_ip_j$ are not adjacent to the elements in $X_1$ of order $p_l$ in $\mathcal{RP}^*(G)$, so $\mathcal{RP}^*(G)$ is not complete $k$-partite. If $G$ has an element of order $p_ip_j$, then the elements in $X_2$ of order $p_ip_j$ are not adjacent to the elements in $X_1$ of order $p_r$, where $r\neq i,j$ in $\mathcal{RP}^*(G)$, so $\mathcal{RP}^*(G)$ is not complete $k$-partite. If either $G$ has an element of order $p_i^2$ for some $i$ or every non-trivial element in $G$ of prime order, then by the argument used in Case 2, $\mathcal{RP}^*(G)$ is not complete $k$-partite.
	
	Proof follows by combining all the above cases.
\end{proof}

\begin{cor}\label{complete partite}
	Let $G$ be a finite group. Then $\mathcal{RP}(G)$ is complete $k$-partite, where $k\geq 3$ if and only if $G\cong \mathbb Z_{p^{k-1}}$.
\end{cor}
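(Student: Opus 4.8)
The plan is to deduce the corollary from Theorem~\ref{complete partite main thm} by isolating the role of the identity in $\mathcal{RP}(G)$. The one genuinely new step is the observation that $e$ is a universal vertex of $\mathcal{RP}(G)$: for every $u\in G\setminus\{e\}$ we have $\langle e\rangle\subset\langle u\rangle$, so $e$ is adjacent to $u$. Since $\mathcal{RP}^*(G)$ is the subgraph of $\mathcal{RP}(G)$ induced on $G\setminus\{e\}$, this means $\mathcal{RP}(G)$ is simply $\mathcal{RP}^*(G)$ together with one added universal vertex. In any partition of $V(\mathcal{RP}(G))$ into independent sets the vertex $e$, being adjacent to everything, must form a part of its own; conversely, adjoining the singleton $\{e\}$ to the parts of a complete multipartite partition of $\mathcal{RP}^*(G)$ yields a complete multipartite partition of $\mathcal{RP}(G)$ with exactly one more part. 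So I would first record the reduction lemma: $\mathcal{RP}(G)$ is complete $k$-partite if and only if $\mathcal{RP}^*(G)$ is complete $(k-1)$-partite.

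With this in hand, the hypothesis $k\geq 3$ gives $k-1\geq 2$, the range covered by Theorem~\ref{complete partite main thm}. For $k\geq 4$ I would apply part~(2) of that theorem with its parameter taken to be $k-1$: $\mathcal{RP}^*(G)$ is complete $(k-1)$-partite precisely when $G\cong\mathbb Z_{p^{k-1}}$, and by the reduction lemma this is exactly when $\mathcal{RP}(G)$ is complete $k$-partite, giving the stated equivalence (and, from~\eqref{str of pm}, the explicit form $\mathcal{RP}(\mathbb Z_{p^{k-1}})\cong K_{1,p-1,p(p-1),\ldots,p^{k-2}(p-1)}$). For the boundary value $k=3$ one instead uses part~(1): $\mathcal{RP}^*(G)$ is complete bipartite iff $G\cong\mathbb Z_{p^2}$, $\mathbb Z_{pq}$ or $Q_8$, among which the cyclic $p$-group is $\mathbb Z_{p^2}=\mathbb Z_{p^{k-1}}$. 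I would also cite Corollary~\ref{partite2}(1) to note that in any complete $k$-partite realisation of $\mathcal{RP}(G)$ the exponent is forced to be $k=\Omega(n)+1$ for $n\in\pi_e(G)$ of maximal $\Omega$-value, so the partition count is unambiguous.

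I do not anticipate a real obstacle: the whole argument rests on the elementary reduction lemma of the first paragraph, after which the classification is read off from Theorem~\ref{complete partite main thm}. The only point that requires care is bookkeeping --- tracking that $e$ belongs to $\mathcal{RP}(G)$ but not to $\mathcal{RP}^*(G)$, which is precisely what shifts the partition count by one and turns ``$\mathbb Z_{p^k}$'' in Theorem~\ref{complete partite main thm}(2) into ``$\mathbb Z_{p^{k-1}}$'' here.
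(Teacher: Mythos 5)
Your reduction lemma is exactly the right (and evidently intended) route: $e$ is a universal vertex of $\mathcal{RP}(G)$, so $\mathcal{RP}(G)=K_1+\mathcal{RP}^*(G)$, and since in a complete multipartite graph a universal vertex must occupy a singleton part, $\mathcal{RP}(G)$ is complete $k$-partite if and only if $\mathcal{RP}^*(G)$ is complete $(k-1)$-partite. The paper offers no written proof of the corollary, and for $k\geq 4$ your application of Theorem~\ref{complete partite main thm}(2) with parameter $k-1$ settles the matter cleanly.

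The gap is at $k=3$, and it is not a bookkeeping slip that can be patched. Your own lemma says that $\mathcal{RP}(G)$ is complete $3$-partite if and only if $\mathcal{RP}^*(G)$ is complete bipartite, and Theorem~\ref{complete partite main thm}(1) answers the latter with the \emph{three} groups $\mathbb Z_{p^2}$, $\mathbb Z_{pq}$ and $Q_8$, not just $\mathbb Z_{p^2}$. The phrase ``among which the cyclic $p$-group is $\mathbb Z_{p^2}$'' establishes only the ``if'' direction; it gives no reason to discard the other two groups, and indeed none exists: $\mathcal{RP}(Q_8)\cong K_1+K_{1,6}\cong K_{1,1,6}$ and $\mathcal{RP}(\mathbb Z_{pq})\cong K_1+K_{p+q-2,(p-1)(q-1)}\cong K_{1,\,p+q-2,\,(p-1)(q-1)}$ are both complete $3$-partite. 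So the ``only if'' direction of the corollary fails at $k=3$ as stated; the correct statement either restricts to $k\geq 4$ or lists $\mathbb Z_{p^2}$, $\mathbb Z_{pq}$ and $Q_8$ in the $k=3$ case. Your reduction actually exposes this, and the honest conclusion of your argument is that the corollary needs amending, not that it follows.
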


The following three results are established in \cite{moghaddamfar PG 2014,raj RPG 2017}:

\begin{thm}(\cite[Corollary 4.1]{moghaddamfar PG 2014})\label{con PG}
	Let $G$ be a finite $p$-group, where $p$ is a prime. Then $\mathcal{P}^*(G)$ is connected if and only if $G$ is either cyclic or generalized quaternion.
\end{thm}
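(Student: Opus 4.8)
The plan is to prove the two implications separately, the only nontrivial external ingredient being the classical structure theorem for finite $p$-groups that possess a unique subgroup of order $p$: such a group is cyclic if $p$ is odd, and cyclic or generalized quaternion if $p=2$ (this is essentially the content of \cite[Theorem 12.5.2]{Marshall}, already invoked above). Throughout I use only the elementary facts that $u,v$ are adjacent in $\mathcal{P}^*(G)$ exactly when $\langle u\rangle\subseteq\langle v\rangle$ or $\langle v\rangle\subseteq\langle u\rangle$, and that a nontrivial cyclic $p$-group has exactly one subgroup of order $p$.

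For the ``if'' direction it suffices to exhibit a vertex of $\mathcal{P}^*(G)$ adjacent to every other vertex, since that forces connectedness (indeed, diameter at most $2$). If $G=\langle g\rangle$ is cyclic of order $p^n$ with $n\geq 1$, then every $x\in G\setminus\{e\}$ is a power of $g$, so $\langle x\rangle\subseteq\langle g\rangle$ and $x$ is adjacent to $g$; thus $g$ is such a vertex. If $G$ is generalized quaternion, then $G$ has a unique subgroup of order $2$, say $\langle z\rangle$, and every nontrivial subgroup of $G$ contains $z$ (every nontrivial subgroup of a $2$-group has even order, hence contains the unique involution); therefore $\langle z\rangle\subseteq\langle x\rangle$ for every $x\in G\setminus\{e\}$, so $z$ is adjacent to every other vertex.

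For the ``only if'' direction I argue contrapositively: suppose $G$ is a finite $p$-group that is neither cyclic nor generalized quaternion, and show $\mathcal{P}^*(G)$ is disconnected. By the structure theorem quoted above, $G$ then has at least two distinct subgroups of order $p$. Define, for each $x\in G\setminus\{e\}$, $\phi(x)$ to be the unique subgroup of order $p$ contained in the nontrivial cyclic $p$-group $\langle x\rangle$. The key step is that $\phi$ is constant along every edge: if $x$ and $y$ are adjacent, say $\langle x\rangle\subseteq\langle y\rangle$, then $\phi(x)$ is a subgroup of order $p$ inside $\langle y\rangle$, and by uniqueness $\phi(x)=\phi(y)$. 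Hence $\phi$ is constant on each connected component of $\mathcal{P}^*(G)$. Since $\phi(x)=\langle x\rangle$ whenever $o(x)=p$, the map $\phi$ attains each order-$p$ subgroup of $G$ as a value, so it takes at least two values; consequently $\mathcal{P}^*(G)$ has at least two components and is disconnected. Combining the two directions gives the theorem.

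The only real obstacle is the classification of finite $p$-groups having a unique subgroup of order $p$; it carries the entire weight of the ``only if'' direction, and once it is granted, the remaining steps — locating a dominating vertex in the cyclic and generalized quaternion cases, and checking that $\phi$ respects adjacency — are short and routine. A fully self-contained treatment would have to reprove that a $2$-group in which all subgroups of order $2$ coincide is cyclic or generalized quaternion, which is standard but not brief.
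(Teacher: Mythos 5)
Your proof is correct and complete. Note that the paper does not prove this statement at all: it is imported verbatim as a citation of \cite[Corollary 4.1]{moghaddamfar PG 2014}, so there is no in-paper argument to compare against. Your route — a dominating vertex (the generator, resp.\ the unique involution) for the ``if'' direction, and for the ``only if'' direction the map $\phi$ sending each $x\neq e$ to the unique order-$p$ subgroup of $\langle x\rangle$, which you correctly show is constant on components and takes at least two values once the classification of $p$-groups with a unique subgroup of order $p$ is invoked — is the standard and fully rigorous way to establish this result, and it correctly isolates that classification theorem as the only nonelementary input.
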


\begin{thm}(\cite[Theorem 2.15]{raj RPG 2017})\label{connected RPG}
	Let $G$ be a finite group of non-prime order or an infinite group. Then $\mathcal{RP}^*(G)$ is connected if and only if  $\mathcal{P}^*(G)$ is connected.
\end{thm}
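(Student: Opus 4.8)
The plan is to prove the two directions separately. One is immediate: since $\langle u\rangle\subset\langle v\rangle$ implies $\langle u\rangle\subseteq\langle v\rangle$, the graph $\mathcal{RP}^*(G)$ is a spanning subgraph of $\mathcal{P}^*(G)$ (same vertex set $G\setminus\{e\}$, fewer edges), so if $\mathcal{RP}^*(G)$ is connected then $\mathcal{P}^*(G)$ is connected; this uses no hypothesis on $G$. All the work is in the converse.

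For the converse I would first isolate the one structural obstruction to transferring connectedness from $\mathcal{P}^*(G)$ down to $\mathcal{RP}^*(G)$. If $x$ and $y$ generate the same cyclic subgroup $C$ of $G$, then they are non-adjacent in $\mathcal{RP}(G)$, yet for every vertex $z$ with $\langle z\rangle\neq C$ one has $z$ adjacent to $x$ in $\mathcal{RP}(G)$ if and only if $\langle z\rangle\subset C$ or $C\subset\langle z\rangle$ if and only if $z$ is adjacent to $y$; that is, distinct generators of a fixed cyclic subgroup are non-adjacent twins in $\mathcal{RP}(G)$. Hence the only edges of $\mathcal{P}^*(G)$ that fail to be edges of $\mathcal{RP}^*(G)$ are those joining two generators of one cyclic subgroup, and such a pair $x,y$ already lies in a single component of $\mathcal{RP}^*(G)$ provided just one generator of their common cyclic subgroup $C$ has some neighbour $z$ in $\mathcal{RP}^*(G)$: that $z$ has $\langle z\rangle\neq C$, so by the twin property $z$ is adjacent in $\mathcal{RP}^*(G)$ to every generator of $C$, giving a path $x$--$z$--$y$.

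The key claim is therefore: if $G$ has non-prime order, or is infinite, and $\mathcal{P}^*(G)$ is connected, then every cyclic subgroup $C$ of $G$ with $|C|\geq 2$ has a generator with a neighbour in $\mathcal{RP}^*(G)$. I would prove this by assuming the contrary for some $C$ and deriving that $|G|$ is prime. If $C$ were infinite, a generator $x$ would be adjacent to $x^{2}\neq e$ in $\mathcal{RP}^*(G)$; if $|C|$ were composite, a generator would be adjacent to an element of $C$ of proper-divisor order, which is again nontrivial; and if $C$ were properly contained in a larger cyclic subgroup $D$, a generator of $C$ would be adjacent to a generator of $D$. So $C$ must be a maximal cyclic subgroup of $G$ of prime order $p$. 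But then maximality and primality force the neighbours of a generator of $C$ in $\mathcal{P}^*(G)$ to be again generators of $C$, so $C\setminus\{e\}$ is a full connected component of $\mathcal{P}^*(G)$; since $\mathcal{P}^*(G)$ is connected this yields $G=C$ and $|G|=p$, contradicting the hypothesis.

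Finally I would assemble everything. Given $u,v\in G\setminus\{e\}$, choose a path $u=w_{0},w_{1},\dots,w_{m}=v$ in $\mathcal{P}^*(G)$. For each $i$, either $\langle w_{i}\rangle\neq\langle w_{i+1}\rangle$, in which case $w_{i}$ and $w_{i+1}$ are adjacent already in $\mathcal{RP}^*(G)$, or $w_{i}$ and $w_{i+1}$ are distinct generators of a cyclic subgroup of order at least $2$, in which case they lie in a common component of $\mathcal{RP}^*(G)$ by the key claim together with the twin property. Either way consecutive vertices of the path lie in the same component of $\mathcal{RP}^*(G)$, so $u$ and $v$ do too; as $u,v$ were arbitrary, $\mathcal{RP}^*(G)$ is connected. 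The main obstacle is precisely the key claim, and it is the only place the hypothesis on $|G|$ is used: one must recognise that a cyclic subgroup whose generators are entirely cut off from the $\mathcal{RP}^*$-structure is forced to be a maximal subgroup of prime order, hence already an entire component of $\mathcal{P}^*(G)$, which a connected $\mathcal{P}^*(G)$ can afford only when $G$ itself has prime order.
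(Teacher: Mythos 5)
Your argument is correct and complete. Note that the paper itself gives no proof of this statement—it is imported verbatim from \cite[Theorem 2.15]{raj RPG 2017}—so there is no in-text proof to compare against; but your route is exactly the mechanism the paper relies on elsewhere when it transfers properties between $\mathcal{P}^*(G)$ and $\mathcal{RP}^*(G)$ (the shortest-path argument in Proposition~\ref{cut}, and the analysis of the equivalence classes of ``generate the same cyclic subgroup'' in Theorem~\ref{connected}). The two essential points are both present and correctly handled: (i) the only edges lost in passing from $\mathcal{P}^*(G)$ to $\mathcal{RP}^*(G)$ join distinct generators of one cyclic subgroup $C$, and such generators are non-adjacent twins in $\mathcal{RP}^*(G)$, so a single $\mathcal{RP}^*$-neighbour of one generator reconnects all of them; and (ii) the only way a cyclic subgroup $C$ can fail to have such a neighbour is for $C$ to be a maximal cyclic subgroup of prime order, in which case $C\setminus\{e\}$ is an entire component of $\mathcal{P}^*(G)$, and connectedness of $\mathcal{P}^*(G)$ then forces $G=C$ of prime order, which is excluded by hypothesis. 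Your case split in the key claim (infinite $C$, composite $|C|$, $C$ non-maximal, and the residual prime-order maximal case) is exhaustive, and the final assembly along a $\mathcal{P}^*$-path is sound. This is a valid proof of the cited theorem.
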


\begin{lemma}(\cite[Lemma 2.16]{raj RPG 2017})\label{pendent vertex}
	If $G$ is a group, and $x$ is a pendent vertex in $\mathcal{RP}^*(G)$, then $o(x)=4$. Converse is true if  $\left\langle x \right\rangle$ is  either $\mathbb Z_4$ or a maximal cyclic subgroup of $G$ when $G$ is noncyclic.
\end{lemma}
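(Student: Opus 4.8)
The plan is to control $deg_{\mathcal{RP}^*(G)}(x)$ by first counting the neighbors of $x$ that lie inside its own cyclic subgroup. In $\langle x\rangle\cong\mathbb Z_{o(x)}$ a non-identity element $z$ is adjacent to $x$ exactly when $\langle z\rangle\subsetneq\langle x\rangle$, that is, exactly when $z$ is a non-generator of $\langle x\rangle$; so $x$ has precisely $o(x)-1-\varphi(o(x))$ neighbors inside $\langle x\rangle$, where $\varphi$ is Euler's function. If $x$ is pendent then this number is at most $1$, hence $o(x)-\varphi(o(x))\le 2$.

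Next I would use the elementary fact that for $n>1$ one has $n-\varphi(n)=1$ iff $n$ is prime and $n-\varphi(n)=2$ iff $n=4$: indeed, if $n-\varphi(n)=2$ then there is exactly one non-unit $a$ in $\{1,\dots,n-1\}$, any prime $p\mid\gcd(a,n)$ must equal $a$, the integer $2p$ (if $<n$) would be a further non-unit, and no second prime can divide $n$, which forces $n=p^\alpha$ with $2p\ge n$ and hence $n=4$. Together with $o(x)\ne 1$ this gives $o(x)\in\mathbb P\cup\{4\}$. To discard the prime case, assume $o(x)=p$; then $x$ has no neighbor inside $\langle x\rangle$, so its unique neighbor is some $y$ with $\langle x\rangle\subsetneq\langle y\rangle$, whence $o(y)$ is a proper multiple of $p$, so $o(y)\ge 2p\ge 4$ and $\langle y\rangle$ has at least $\varphi(o(y))\ge 2$ generators, each adjacent to $x$ --- contradicting $deg_{\mathcal{RP}^*(G)}(x)=1$. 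Therefore $o(x)=4$, proving the first assertion.

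For the converse, write $\langle x\rangle=\{e,x,x^2,x^3\}$ with $o(x^2)=2$. The count above shows that $x$ has exactly one neighbor inside $\langle x\rangle$, namely $x^2$ (since $x^3$ generates $\langle x\rangle$). Hence $x$ is pendent if and only if $x$ has no neighbor outside $\langle x\rangle$, i.e. if and only if no $y\in G$ satisfies $\langle x\rangle\subsetneq\langle y\rangle$, i.e. if and only if $\langle x\rangle$ is a maximal cyclic subgroup of $G$. When $G$ is non-cyclic this is precisely the stated hypothesis; when $G$ is cyclic the unique maximal cyclic subgroup is $G$ itself, so the condition becomes $G=\langle x\rangle\cong\mathbb Z_4$. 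This gives the lemma. I do not foresee a genuine obstacle: the argument is a short degree count together with the $n-\varphi(n)\le 2$ fact, and the only delicate point is the converse, where one must recognize that pendency of an order-$4$ vertex is equivalent to maximality of $\langle x\rangle$ among the cyclic subgroups of $G$ and then split into the cyclic and non-cyclic cases.
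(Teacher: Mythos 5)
Your argument is correct. Note that the paper does not prove this lemma at all---it is quoted from \cite[Lemma 2.16]{raj RPG 2017}---so there is no in-paper proof to compare against; your degree count $o(x)-1-\varphi(o(x))$ inside $\langle x\rangle$, the characterization of $n-\varphi(n)\le 2$, the elimination of the prime case via the $\varphi(o(y))\ge 2$ generators of a strictly larger cyclic subgroup, and the reduction of the converse to maximality of $\langle x\rangle$ among cyclic subgroups are all sound and constitute the natural proof. The only point worth adding explicitly is that a pendent vertex must have finite order (an element of infinite order is adjacent to $x^2, x^4, x^8,\dots$, which generate pairwise distinct subgroups), since the lemma is stated for an arbitrary group and your opening line already assumes $\langle x\rangle\cong\mathbb Z_{o(x)}$ with $o(x)$ finite.
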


The next result is an immediate consequence of Theorems~\ref{connected RPG} and \ref{con PG}.

\begin{cor}\label{connected p group}
	Let $G$ be a finite $p$-group, where $p$ is a prime. Then $\mathcal{RP}^*(G)$ is connected if and only if $G$ is either cyclic or quaternion. 
\end{cor}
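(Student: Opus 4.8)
The plan is to deduce the statement by concatenating the two results already quoted, handling the prime-order base case separately. Write $|G| = p^n$ with $n \ge 1$.

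Suppose first that $n \ge 2$, so $G$ has non-prime order. Then Theorem~\ref{connected RPG} applies and gives that $\mathcal{RP}^*(G)$ is connected if and only if $\mathcal{P}^*(G)$ is connected. Applying Theorem~\ref{con PG} to the prime $p$, the latter holds if and only if $G$ is cyclic or generalized quaternion. Chaining these two biconditionals yields, for $n \ge 2$, that $\mathcal{RP}^*(G)$ is connected precisely when $G$ is cyclic or (generalized) quaternion, as claimed. As a consistency check: a generalized quaternion group is a $2$-group, so for odd $p$ the surviving case is $G \cong \mathbb{Z}_{p^n}$, and indeed $\mathcal{RP}^*(\mathbb{Z}_{p^n}) \cong K_{p-1,\,p(p-1),\,\ldots,\,p^{n-1}(p-1)}$ by~\eqref{str of pm}, a complete multipartite graph with $n \ge 2$ parts, hence connected.

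It remains only to inspect $n = 1$, i.e.\ $G \cong \mathbb{Z}_p$, which Theorem~\ref{connected RPG} excludes from its hypotheses. Here every non-identity element generates $G$, so no proper inclusion $\langle u \rangle \subset \langle v \rangle$ ever occurs, $\mathcal{RP}^*(\mathbb{Z}_p)$ is edgeless, and it is connected exactly when $p = 2$; thus the clean equivalence should be read in the range $n \ge 2$ (with the trivial case $G \cong \mathbb{Z}_2$ adjoined). Beyond this bookkeeping there is no real obstacle: all the substance already sits in Theorems~\ref{connected RPG} and~\ref{con PG}, and the only point needing care is that the passage from $\mathcal{P}^*$-connectedness to $\mathcal{RP}^*$-connectedness is not automatic for $G \cong \mathbb{Z}_p$, so that single case must be treated by hand.
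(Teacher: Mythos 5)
Your proof follows the same route as the paper, which simply declares the corollary an immediate consequence of Theorems~\ref{connected RPG} and~\ref{con PG}. Your extra care with the case $|G|=p$ is warranted and in fact exposes an imprecision in the statement itself: Theorem~\ref{connected RPG} excludes groups of prime order, and for $G\cong\mathbb Z_p$ with $p$ odd we have $\mathcal{RP}^*(G)=\overline{K}_{p-1}$, which is disconnected even though $G$ is cyclic, so the ``if'' direction fails there. The equivalence should indeed be read for $|G|=p^n$ with $n\geq 2$ (with the trivial case $G\cong\mathbb Z_2$ adjoined), exactly as you say.
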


\begin{thm}\label{star tree}
	Let $G$ be a finite group. Then the following are equivalent:
	\begin{enumerate}[\normalfont(1)]		
		\item $\mathcal{RP}^*(G)$ is a tree,
		\item $\mathcal{RP}^*(G)$ is a star,
		\item $G\cong \mathbb Z_4$ or $Q_8$.
	\end{enumerate}
\end{thm}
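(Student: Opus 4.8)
The plan is to establish the cycle of implications $(2)\Rightarrow(1)\Rightarrow(3)\Rightarrow(2)$. The implication $(2)\Rightarrow(1)$ is immediate, as every star is a tree. For $(3)\Rightarrow(2)$ it suffices to recall the structural descriptions already on hand: by~\eqref{str of pm} we have $\mathcal{RP}^*(\mathbb Z_4)\cong K_{1,2}$, and in the proof of Theorem~\ref{complete partite main thm} it was shown that $\mathcal{RP}^*(Q_8)\cong K_{1,6}$; both of these are stars. So the substance lies in $(1)\Rightarrow(3)$.

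Assume $\mathcal{RP}^*(G)$ is a tree. Then it is connected and acyclic, hence triangle-free, so Corollary~\ref{c3 free}(1) gives $\Omega(m)\le 2$ for every $m\in\pi_e(G)$; in particular every non-identity element of $G$ has order $p$, $p^2$ or $pq$ for primes $p,q$. If $|G|$ is prime, then $\mathcal{RP}^*(G)$ is edgeless (every non-identity element generates $G$), which is a tree only in the degenerate single-vertex case $G\cong\mathbb Z_2$, which we set aside following the convention that the graphs under consideration are nontrivial (equivalently, one restricts to $|G|\ge 3$). So I assume $|G|$ is not prime and distinguish cases by the number $t$ of distinct primes dividing $|G|$.

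First suppose $t\ge 2$. If every non-identity element of $G$ had prime-power order, then the non-identity elements would split, according to the prime dividing their order, into at least two nonempty classes (nonempty by Cauchy's theorem) with no edges between distinct classes, since cyclic subgroups of coprime order $>1$ admit no containment; this would force $\mathcal{RP}^*(G)$ to be disconnected, a contradiction. Hence $G$ has an element $z$ with $\Omega(o(z))\ge 2$, and since $\Omega(o(z))\le 2$ we get $o(z)=pq$ for distinct primes $p,q$. Now $\langle z\rangle\cong\mathbb Z_{pq}$, and the subgraph of $\mathcal{RP}^*(G)$ induced on $\langle z\rangle\setminus\{e\}$ is precisely $\mathcal{RP}^*(\mathbb Z_{pq})\cong K_{(p-1)+(q-1),\,(p-1)(q-1)}$, as computed in the proof of Theorem~\ref{complete partite main thm}. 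Since $p\ne q$, both parts have size at least $2$, so this induced subgraph contains a $4$-cycle; thus $\mathcal{RP}^*(G)$ is not acyclic, a contradiction. Therefore $t\ge 2$ is impossible.

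Consequently $|G|=p^n$ for a prime $p$ and some $n\ge 2$, and by Corollary~\ref{connected p group} connectedness of $\mathcal{RP}^*(G)$ forces $G$ to be cyclic or generalized quaternion. If $G\cong\mathbb Z_{p^n}$, then $\Omega(p^n)=n\le 2$ forces $n=2$, and by~\eqref{str of pm} $\mathcal{RP}^*(\mathbb Z_{p^2})\cong K_{p-1,\,p(p-1)}$, which is acyclic only when $p-1=1$, that is $G\cong\mathbb Z_4$. If $G$ is generalized quaternion, say $G\cong Q_{2^m}$ with $m\ge 3$, then $G$ contains a cyclic subgroup of order $2^{m-1}$, hence an element of order $2^{m-1}$, so $\Omega(2^{m-1})=m-1\le 2$ forces $m=3$ and $G\cong Q_8$. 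This finishes $(1)\Rightarrow(3)$. I expect the only delicate points to be the degenerate small cases (the status of $\mathbb Z_2$ and single-vertex graphs) and the observation that $\mathcal{RP}^*$ of a subgroup appears as an \emph{induced} subgraph, which is exactly what converts the $\mathbb Z_{pq}$ computation into a genuine $4$-cycle in $\mathcal{RP}^*(G)$; everything else is bookkeeping assembly of Corollaries~\ref{c3 free} and~\ref{connected p group} together with the complete-multipartite structure results.
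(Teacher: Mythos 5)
Your proof is correct and follows essentially the same route as the paper: both arguments combine triangle-freeness (Corollary~\ref{c3 free}), the detection of $4$-cycles inside cyclic subgroups of order $pq$ or odd $p^{2}$, and the connectivity classification of finite $p$-groups (Corollary~\ref{connected p group}). The only organizational difference is that the paper first invokes Lemma~\ref{pendent vertex} to produce an element of order $4$, which you bypass by settling the cyclic $p$-group case directly via $\mathcal{RP}^*(\mathbb Z_{p^2})\cong K_{p-1,\,p(p-1)}$; you are also slightly more careful than the paper about the degenerate one-vertex tree $\mathcal{RP}^*(\mathbb Z_2)$.
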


\begin{proof}
	$(2)\Longleftrightarrow  (3)$ is proved in \cite[Theorem 2.17]{raj RPG 2017}. Clearly, $(3)\Rightarrow (1)$. Now we prove $(1) \Rightarrow (3)$. Let $|G|=p_1^{n_1}p_2^{n_2}\ldots p_k^{n_k}$, where $p_i$'s are distinct primes and $k \geq 1$, $n_i\geq 1$ for all $i=1,2,\ldots, k$.
	Since $\mathcal{RP}^*(G)$ is a tree, $\mathcal{RP}^*(G)$ has at least two pendent vertices.  Then $G$ has an element of order 4, by~Lemma\ref{pendent vertex} and this implies that $p_i=2$ and $n_i\geq 2$ for some $i\in \{1,2,\ldots, k\}$. Without loss of generality, we assume that $p_1=2$ and $n_1\geq 2$.
	
	\noindent \textbf{Case 1.} Let $\Omega (n)\geq 3$ for some $n\in \pi_e(G)$.
	
	Then by part (1) of Corollary~\ref{c3 free}, $C_3$ is a subgraph of $\mathcal{RP}^*(G)$.
	
	\noindent \textbf{Case 2.} Let for every $n\in \pi_e(G)$, $\Omega(n)\leq 2$.
	
	If $G$ has an element, say $a$, of order $p_i^2$ ($i\neq 1$) (resp. $p_rp_s(r\neq s)$), then the elements of order $p_i^2~(\text{resp.~} p_rp_s)$ in $\left\langle a \right\rangle $ together with the elements of order $p_i$ (resp. $p_r$ and $p_s$) in $\left\langle a \right\rangle $ forms $C_4$ as a subgraph in $\mathcal{RP}^*(G)$.
	
	\noindent \textbf{Case 3.} Let $\pi_e(G)=\{1,4\}\cup \mathbb{P}$. 
	
	If $k\geq 2$, then the elements of order $p_i$ $(i\neq 1)$ are isolated vertices in $\mathcal{RP}^*(G)$, a contradiction to our assumption that $\mathcal{RP}^*(G)$ is a tree.	
	If $k=1$, then $|G|=2^n$ with $\pi_e(G)=\{1,2,4\}$. Then by Corollary~\ref{connected p group}, $G\cong \mathbb Z_4$ or $Q_8$, since $\mathcal{RP}^*(G)$ is connected. Also, $\mathcal{RP}^*(G)\cong K_{1,2}$ or $K_{1,6}$, which are trees. 
\end{proof}

The girth of the reduced power graph of a finite group is determined by the authors in \cite[Corollary 2.12]{raj RPG 2017}. In the following result, we determine the girth of the proper reduced power graph of a finite group.

\begin{thm}
	Let $G$ be a finite group and let $n\in \pi_e(G)$ be such that $\Omega(n)$ is maximum. Then
	
	$gr(\mathcal{RP}^*(G))=\begin{cases}
	3, & \text{if}~~\Omega(n)\geq 3;\\
	4, & \text{if}~~\Omega(n)=2 ~\text{and}~ \pi_e(G)\neq \{1,4\}\cup \mathbb{P}; \\\infty, & \text{otherwise}.
	\end{cases}$
\end{thm}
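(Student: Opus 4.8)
The plan is to split into the three cases declared in the statement and in each case either exhibit a short cycle (for the upper bounds) or prove that no cycle of the relevant length can exist (for the ``otherwise'' case and for ruling out triangles when $\Omega(n)=2$).

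\textbf{Case $\Omega(n)\geq 3$.} Here I would invoke Corollary~\ref{c3 free}(1): since $\Omega(n)\geq 3$ for some $n\in\pi_e(G)$, $\mathcal{RP}^*(G)$ is not triangle-free, so it contains $C_3$ as a subgraph, and hence $gr(\mathcal{RP}^*(G))=3$. (Concretely, the clique $x_1,x_2,x_3$ built inside $\langle x\rangle$ in the discussion preceding Theorem~\ref{partite} already gives such a triangle.)

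\textbf{Case $\Omega(n)=2$ and $\pi_e(G)\neq\{1,4\}\cup\mathbb P$.} First, since $\Omega(m)\leq 2$ for every $m\in\pi_e(G)$, Corollary~\ref{c3 free}(1) says $\mathcal{RP}^*(G)$ is triangle-free, so $gr(\mathcal{RP}^*(G))\geq 4$. It remains to produce a $C_4$. Because $\Omega(n)=2$, $G$ has an element $a$ with $o(a)$ equal to $p^2$ or $pq$ for distinct primes $p,q$. If $o(a)=pq$, then in $\langle a\rangle\cong\mathbb Z_{pq}$ the two elements of order $p$ together with the two elements of order $q$ are pairwise adjacent across the parts (each element of order $p$ has its cyclic group contained in each cyclic group of order $pq$, but there is a unique such group here), giving a $4$-cycle; more carefully, pick $u$ of order $p$, $v,v'$ of order $pq$ generating $\langle a\rangle$ with $\langle u\rangle\subset\langle v\rangle\cap\langle v'\rangle$, and $w$ of order $q$ with $\langle w\rangle\subset\langle v\rangle\cap\langle v'\rangle$; then $u-v-w-v'-u$ is a $C_4$. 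If instead every element of order divisible by two primes has prime-power order $p^2$ — so $\pi_e(G)$ contains some $p^2$ — I must exhibit a $C_4$; this is the delicate subcase, because $\mathbb Z_4$ alone gives only $K_{1,2}$. Here I would use the hypothesis $\pi_e(G)\neq\{1,4\}\cup\mathbb P$: either $p^2\neq 4$, in which case $\mathbb Z_{p^2}$ has $\geq 2$ generators and $\geq 2$ elements of order $p$, and Theorem~\ref{partite}'s structure (or \eqref{str of pm}) gives $K_{p-1,p(p-1)}\supseteq C_4$ since $p\geq 3$; or $p^2=4$ but $\pi_e(G)$ contains some prime $q$ not forced, or an element of order $4$ whose cyclic group is not maximal, or two distinct subgroups of order $2$ lying under a common $\mathbb Z_4$ — all of which, via Lemma~\ref{pendent vertex} and a short degree count, yield a vertex of degree $\geq 2$ among the order-$4$ elements or an extra order-$2$ vertex producing a $C_4$. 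The main obstacle is this bookkeeping: enumerating exactly which group-theoretic configurations are excluded by $\pi_e(G)\neq\{1,4\}\cup\mathbb P$ and checking each yields a $4$-cycle.

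\textbf{Otherwise.} The remaining possibilities are $\Omega(n)\leq 1$, or $\Omega(n)=2$ with $\pi_e(G)=\{1,4\}\cup\mathbb P$. If $\Omega(n)\leq 1$ then every non-identity element of $G$ has prime order, so by \cite[Theorem 2.9]{raj RPG 2017} $\mathcal{RP}^*(G)$ is totally disconnected and has no cycle. If $\pi_e(G)=\{1,4\}\cup\mathbb P$, then every edge of $\mathcal{RP}^*(G)$ joins an element of order $2$ to an element of order $4$ lying above it, while elements of order an odd prime are isolated; thus each component is contained in the induced subgraph on the order-$2$ and order-$4$ elements under a fixed cyclic $\mathbb Z_4$. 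Since a given $\mathbb Z_4$ has a unique subgroup of order $2$ and two generators, each such component is a star $K_{1,2}$ (or smaller after identifications), and distinct $\mathbb Z_4$'s sharing their order-$2$ subgroup would force a larger order-$4$-part-to-order-$2$-part bipartite block with the order-$2$ side a single vertex, still a star. A star is acyclic, so $gr(\mathcal{RP}^*(G))=\infty$. Assembling the three cases completes the proof; the hard part is, as noted, the $C_4$-construction in the $p^2$ subcase of Case $2$.
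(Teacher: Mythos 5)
Your overall route is the same as the paper's: Corollary~\ref{c3 free}(1) gives the triangle in the first case and rules out triangles in the second, an explicit $C_4$ inside a cyclic subgroup of order $pq$ or $p^2$ ($p$ odd) gives the upper bound $4$, and the ``otherwise'' case is handled exactly as the paper does (totally disconnected when $\Omega(n)\leq 1$; a union of stars $K_{1,2}$, possibly sharing their order-$2$ centres as in $Q_8$, when the only composite order is $4$). The one place you diverge is the ``delicate subcase'' you flag in Case~2, and that subcase is a phantom. The hypothesis $\pi_e(G)\neq\{1,4\}\cup\mathbb P$ must be read as $\pi_e(G)\not\subseteq\{1,4\}\cup\mathbb P$ (the paper uses the same abuse in Theorem~\ref{star tree} and Proposition~\ref{acyclic}); under the literal set-equality reading the theorem is simply false, since $\pi_e(\mathbb Z_4)=\{1,2,4\}\neq\{1,4\}\cup\mathbb P$ yet $\mathcal{RP}^*(\mathbb Z_4)\cong K_{1,2}$ has infinite girth. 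With the intended reading, $\Omega(n)=2$ together with $\pi_e(G)\not\subseteq\{1,4\}\cup\mathbb P$ forces an element of order $p^2$ with $p$ odd or of order $pq$, and your two constructions already finish the proof; there is no residual case where $4$ is the only composite order. Be aware that your fallback analysis of that residual case would not survive scrutiny anyway: a cyclic group of order $4$ has a unique subgroup of order $2$, so ``two distinct subgroups of order $2$ lying under a common $\mathbb Z_4$'' cannot occur; an order-$4$ cyclic subgroup that is not maximal would sit inside a cyclic group of order $8$ or $4q$, contradicting $\Omega\leq 2$; and odd-prime-order elements are isolated, so an extra prime in $\pi_e(G)$ produces no $C_4$. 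Indeed, any group with $\pi_e(G)=\{1,2,4\}$ has girth $\infty$, which is precisely why the hypothesis must be the subset version. Deleting that subcase (and citing the corrected hypothesis) makes your proof complete and matches the published one.
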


\begin{proof}
	If $\Omega(n)\geq 3$, then by Corollary \ref{c3 free}, $gr(\mathcal{RP}^*(G))=3$. If $\Omega(n)=2$ and $\pi_e(G)\neq \{1,4\}\cup  \mathbb P$, then by Theorem~\ref{partite}, $\mathcal{RP}^*(G)$ is bipartite  and $G$ has an element of order $p^2$ ($p >2$) or $pq$, where $p,q$ are distinct prime factors of $|G|$. Consequently, $\mathcal{RP}^*(G)$ has $C_4$ as a subgraph. But the bipartiteness of $G$ forces that $gr(\mathcal{RP}^*(G))=4$. If $\Omega(n)=2$ and $\pi_e(G)= \{1,4\}\cup  \mathbb P$, then $\mathcal{RP}^*(G)$ is the union (not necessarily disjoint) of some copies of the graph $K_{1,2}$ and isolated vertices. It follows that $gr(\mathcal{RP}^*(G))=\infty$. If $\Omega(n)=1$, then $\mathcal{RP}^*(G)$ is totally disconnected, so $gr(\mathcal{RP}^*(G))=\infty$. 
\end{proof}

\section{Perfectness, Acyclicity, Claw-freeness}

The complement of a graph $\Gamma$ is denoted by $\overline \Gamma$.  The join of two graphs $\Gamma_1$ and $\Gamma_2$ is denoted by $\Gamma_1+\Gamma_2$.  A graph
$\Gamma$ is perfect if for every induced subgraph $H$ of $\Gamma$, the chromatic number of $H$ equals the size of the largest clique of $H$. The strong perfect graph theorem \cite{chudnovsky perfect graph thm} states that \textit{a graph is perfect if and only if neither the graph nor its complement contains an odd cycle of length at least 5 as an induced subgraph}. In the following two results we show that $\mathcal{RP}(G)$ and $\mathcal{RP}^*(G)$ are perfect for any finite group $G$.   

\begin{thm}\label{perfect}
	For a finite group $G$, $\mathcal{RP}(G)$ is perfect.
\end{thm}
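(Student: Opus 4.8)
The plan is to invoke the strong perfect graph theorem: it suffices to show that neither $\mathcal{RP}(G)$ nor its complement $\overline{\mathcal{RP}(G)}$ contains an induced odd cycle $C_{2k+1}$ with $2k+1 \ge 5$. I would handle the two halves separately.

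First I would rule out an induced $C_{2k+1}$ with $2k+1 \ge 5$ inside $\mathcal{RP}(G)$ itself. The key structural fact is that adjacency in $\mathcal{RP}(G)$ is governed by strict containment of cyclic subgroups, which is a (strict) partial order on the set $\{\langle x\rangle : x \in G\}$; moreover, if $\langle x\rangle \subset \langle z\rangle$ and $\langle y\rangle \subset \langle z\rangle$ for the same $z$, then the cyclic group $\langle z\rangle$ contains both $\langle x\rangle$ and $\langle y\rangle$, and in a cyclic group the lattice of subgroups is a chain on each prime, so $\langle x\rangle$ and $\langle y\rangle$ are comparable whenever... — actually that last claim is false in general (two cyclic subgroups of a cyclic group of order $pq$ need not be comparable), so the right statement is more delicate. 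Instead I would argue directly on a putative induced cycle $v_1 v_2 \cdots v_{2k+1} v_1$: consider a vertex $v_i$ whose cyclic subgroup $\langle v_i\rangle$ is maximal (with respect to $\subseteq$) among those appearing on the cycle. Then both neighbours $v_{i-1}, v_{i+1}$ satisfy $\langle v_{i-1}\rangle \subset \langle v_i\rangle$ and $\langle v_{i+1}\rangle \subset \langle v_i\rangle$. Now among $v_{i-1}$ and $v_{i+1}$ pick one, say $v_{i+1}$, that is minimal; I would chase the implied containments around and derive an extra chord or a short closing, contradicting that the cycle is induced and has length $\ge 5$. The cleanest way to package this: show $\mathcal{RP}(G)$ is a \emph{comparability graph} of the poset of cyclic subgroups (with a vertex for each group element, all elements generating the same cyclic subgroup forming an independent "bundle"), and comparability graphs are perfect (a classical theorem of Berge/Dilworth), hence contain no induced odd cycle of length $\ge 5$ and their complements contain none either.

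For the complement, the same comparability-graph viewpoint does the work: the complement of a comparability graph is again perfect (this is exactly Mirsky's/Dilworth's duality, or simply the statement that perfection is complement-closed — the weak perfect graph theorem), so $\overline{\mathcal{RP}(G)}$ has no induced odd cycle of length $\ge 5$ either. Thus, rather than doing two separate $C_{2k+1}$-exclusion arguments, I would prove the single clean structural lemma that $\mathcal{RP}(G)$ is a comparability graph — concretely, orient an edge $uv$ from $u$ to $v$ when $\langle v\rangle \subset \langle u\rangle$; this orientation is exactly $\overrightarrow{\mathcal{RP}}(G)$ and it is transitive since $\subset$ is transitive — and then cite that comparability graphs are perfect.

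The main obstacle is making sure the orientation really is a transitive orientation of the whole graph, including the non-edges: one must check that if $u \to v$ and $v \to w$ are oriented edges then $uw$ is an edge oriented $u \to w$, which is immediate from transitivity of $\subset$, \emph{but} one must also be careful that distinct elements generating the same cyclic subgroup are non-adjacent (true, since $\subset$ is strict) so they form independent sets and cause no problem. I expect the bulk of the writing to be this verification plus the citation that comparability graphs are perfect; everything else is formal. If the paper prefers to stay self-contained and avoid citing comparability-graph perfection, the fallback is the direct induced-$C_{2k+1}$ argument sketched above for $\mathcal{RP}(G)$, together with a parallel (and slightly more annoying) direct argument for $\overline{\mathcal{RP}(G)}$ using a vertex whose cyclic subgroup is maximal on the complement-cycle; that direct route is where the real case-checking would live, so I would lead with the comparability-graph proof.
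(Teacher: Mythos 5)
Your proof is correct, but it takes a genuinely different route from the paper. The paper deduces the result from the known perfection of the power graph $\mathcal{P}(G)$ (\cite[Theorem 5]{doostabadi PG 2015}): it applies the strong perfect graph theorem and argues that an induced odd cycle of length at least $5$ in $\mathcal{RP}(G)$ or in $\overline{\mathcal{RP}(G)}$ would have all its vertices generating pairwise distinct cyclic subgroups, hence would survive as an induced odd cycle in $\mathcal{P}(G)$ or $\overline{\mathcal{P}(G)}$, a contradiction. You instead prove the stronger structural fact that $\mathcal{RP}(G)$ is the comparability graph of the strict partial order $u \prec v \iff \langle u\rangle \subset \langle v\rangle$ on $G$ (irreflexivity, antisymmetry and transitivity are immediate, and elements generating the same cyclic subgroup are incomparable, hence non-adjacent), and then cite the classical perfection of comparability graphs together with complement-closure of perfection. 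Your route is self-contained relative to the power-graph literature, avoids the strong perfect graph theorem entirely in favour of Dilworth/Mirsky-type results, and yields extra structural information; the paper's route is shorter given the cited result on $\mathcal{P}(G)$, though it quietly uses the fact that on an induced cycle of length at least $5$ no two vertices can generate the same cyclic subgroup (such vertices would share their cycle-neighbours' adjacencies, which is impossible for cycles of length at least $5$) --- a point worth making explicit. Do discard the first, admittedly muddled, direct chord-chasing sketch in your write-up; the comparability-graph packaging is the argument to keep.
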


\begin{proof}
	If we show that neither $\mathcal{RP}(G)$ nor its complement has an induced cycle of odd length at least 5, then by strong perfect graph theorem \cite{chudnovsky perfect graph thm}, $\mathcal{RP}(G)$ is perfect. Assume the contrary. Let $C:u_1-u_2-\cdots-u_{2n+1}-u_1$ be an induced cycle of odd length at least 5 in $\mathcal{RP}(G)$. Then $\left\langle u_i \right\rangle \neq \left\langle u_j \right\rangle $, $i\neq j$. This implies that $C$ is also an induced cycle in $\mathcal{P}(G)$, which is a contradiction to \cite[Theorem 5]{doostabadi PG 2015}, which states that  $\mathcal{P}(G)$ is perfect . Suppose $C': v_1-v_2-\cdots-v_{2n+1}-v_1$ be an induced cycle of odd length at least 5 in $\overline{\mathcal{RP}(G)}$. Then $\left\langle v_i \right\rangle \neq \left\langle v_j \right\rangle $, $i\neq j$. Consequently, $C'$ is also an induced cycle of odd length at least 5 in $\overline{\mathcal{P}(G)}$, which is again a contradiction to the fact that $\mathcal{P}(G)$ is perfect.  
\end{proof}

\begin{cor}
	For a finite group $G$, $\mathcal{RP}^*(G)$ is perfect.
\end{cor}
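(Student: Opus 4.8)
The plan is to deduce the corollary directly from Theorem~\ref{perfect} by observing that $\mathcal{RP}^*(G)$ is an induced subgraph of $\mathcal{RP}(G)$. Indeed, $\mathcal{RP}^*(G)$ is, by definition, the subgraph of $\mathcal{RP}(G)$ induced on the vertex set $G\setminus\{e\}$, so it is an induced subgraph of $\mathcal{RP}(G)$.

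First I would recall the standard fact that perfectness is inherited by induced subgraphs: if $\Gamma$ is perfect and $H$ is an induced subgraph of $\Gamma$, then $H$ is perfect, because every induced subgraph of $H$ is also an induced subgraph of $\Gamma$, and hence satisfies $\chi=\omega$ by perfectness of $\Gamma$. Alternatively, one can invoke the strong perfect graph theorem as in the proof of Theorem~\ref{perfect}: any induced odd cycle of length at least $5$ in $\mathcal{RP}^*(G)$ or in its complement would also be an induced odd cycle of length at least $5$ in $\mathcal{RP}(G)$ or its complement, contradicting Theorem~\ref{perfect}.

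So the whole argument is a one-line consequence: since $\mathcal{RP}(G)$ is perfect by Theorem~\ref{perfect} and $\mathcal{RP}^*(G)$ is an induced subgraph of $\mathcal{RP}(G)$, the graph $\mathcal{RP}^*(G)$ is perfect. There is no real obstacle here; the only thing to be careful about is to state explicitly that the hereditary property of perfectness under induced subgraphs is being used (or to spell out the one-sentence reduction via the strong perfect graph theorem), rather than leaving it implicit.
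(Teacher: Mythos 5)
Your proposal is correct and matches the paper's argument: the paper likewise deduces the corollary from Theorem~\ref{perfect} together with the fact that $\mathcal{RP}^*(G)$ is an induced subgraph of $\mathcal{RP}(G)$ (phrased there via $\mathcal{RP}(G)=K_1+\mathcal{RP}^*(G)$) and the hereditary nature of perfectness built into its definition. Your explicit statement of the hereditary property is a welcome clarification, but the route is the same.
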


\begin{proof}
	The proof follows from the definition of perfect graph,  Theorem~\ref{perfect} and the fact that $\mathcal{RP}(G)=K_1+\mathcal{RP}^*(G)$. 
\end{proof}

\begin{pro}\label{acyclic}
	Let $G$ be a finite group and $p$, $q$ be distinct primes. Then we have the following:
	
	\begin{enumerate}[\normalfont(1)]
		\item $\mathcal{RP}^*(G)$ is acyclic if and only if $\pi_e(G)\subseteq \{1,4\}\cup \mathbb{P}$.
		\item $\mathcal{RP}(G)$ is acyclic if and only if $G$ is either a $p$-group with exponent $p$ or non-nilpotent group of order $p^nq$ ($n\geq 1$) with all non-trivial elements are of order $p$ or $q$.
	\end{enumerate} 
\end{pro}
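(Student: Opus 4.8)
The plan is to treat the two parts by first reducing each to a condition on the orders of the elements of $G$ and then finishing, respectively, with an explicit description of the graph and with a classical group-theoretic classification; throughout I use the remark recorded before Theorem~\ref{partite} that adjacent vertices have comparable orders. For part (1), I would first prove the ``if'' direction by writing down $\mathcal{RP}^{*}(G)$ explicitly when $\pi_{e}(G)\subseteq\{1,4\}\cup\mathbb{P}$: the only admissible non-trivial orders are then $4$ and primes, so a short divisibility check shows that an element of odd prime order is isolated, an element of order $2$ is adjacent only to the order-$4$ elements lying above it, and an element $x$ of order $4$ is adjacent only to the involution $x^{2}$. Hence $\mathcal{RP}^{*}(G)$ is a vertex-disjoint union of stars --- each centred at an involution $z$ with leaf set $\{y : o(y)=4,\ y^{2}=z\}$ --- together with isolated vertices, in particular a forest, so it is acyclic. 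For the converse I argue by contraposition: if some $x\in G$ has $o(x)=m\notin\{1,4\}\cup\mathbb{P}$, then $m$ is composite and $m\neq 4$; if $\Omega(m)\ge 3$ then $\mathcal{RP}^{*}(G)$ contains a triangle by Corollary~\ref{c3 free}(1), while if $\Omega(m)=2$ then $m=p^{2}$ with $p$ odd or $m=pq$ with $p<q$, and in each case I exhibit a $4$-cycle inside $\langle x\rangle$: in $\mathbb{Z}_{p^{2}}$ two generators $u_{1},u_{2}$ and two elements $v_{1},v_{2}$ of order $p$ give $v_{1}-u_{1}-v_{2}-u_{2}-v_{1}$, and in $\mathbb{Z}_{pq}$ two generators $u_{1},u_{2}$ together with elements $v,w$ of orders $p,q$ give $v-u_{1}-w-u_{2}-v$. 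Either way $\mathcal{RP}^{*}(G)$ is not acyclic.

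For part (2) the key structural observation is the identity $\mathcal{RP}(G)=K_{1}+\mathcal{RP}^{*}(G)$ used after Theorem~\ref{perfect}: the identity element is adjacent to every other vertex, so any edge of $\mathcal{RP}^{*}(G)$ closes with $e$ to a triangle, whereas if $\mathcal{RP}^{*}(G)$ is edgeless then $\mathcal{RP}(G)$ is a star. Thus $\mathcal{RP}(G)$ is acyclic if and only if $\mathcal{RP}^{*}(G)$ has no edge, which by Corollary~\ref{c3 free}(2) --- or directly, since $\langle x^{m/r}\rangle\subsetneq\langle x\rangle$ whenever a prime $r$ properly divides $m=o(x)$ --- is equivalent to requiring that every non-trivial element of $G$ have prime order. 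It then remains to classify the finite groups with this property, which I would do by the number $t$ of prime divisors of $|G|$: for $t=1$ this says precisely that $G$ is a $p$-group of exponent $p$; for $t=2$, writing $|G|=p^{a}q^{b}$, the absence of an element of order $pq$ prevents $G$ from being nilpotent, and a Sylow/Frobenius-type analysis (using that Frobenius complements have cyclic odd Sylow subgroups) forces $|G|=p^{n}q$ with every non-trivial element of order $p$ or $q$; and for $t\ge 3$ one appeals to the known classification of finite groups in which every element has prime order. Assembling the cases yields the statement.

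The routine ingredients are the edge-by-edge analysis in part (1) and the join identity in part (2). The substantive point --- and the main obstacle --- is the group theory needed to finish part (2): classifying the finite groups in which every non-identity element has prime order. The cases $t\le 2$ are within reach of standard Sylow theory together with the observation that a group of order $p^{a}q^{b}$ with no element of order $pq$ is non-nilpotent (and, in the relevant situation, Frobenius), but controlling $t\ge 3$ is the delicate part and is where one must lean on the literature.
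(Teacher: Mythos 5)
Your overall strategy coincides with the paper's. For part (1) you show that when $\pi_e(G)\subseteq\{1,4\}\cup\mathbb{P}$ the graph $\mathcal{RP}^*(G)$ is a vertex-disjoint union of stars (centred at involutions, with leaves the order-$4$ elements squaring to them) together with isolated vertices, and otherwise you exhibit a $C_3$ via Corollary~\ref{c3 free}(1) or an explicit $C_4$ inside a cyclic subgroup of order $p^2$ ($p$ odd) or $pq$; this is exactly what the paper does by pointing back to Cases 1 and 2 of Theorem~\ref{star tree}, and your edge-by-edge verification is correct. For part (2) both you and the paper reduce acyclicity of $\mathcal{RP}(G)$ to the condition that every non-trivial element of $G$ has prime order (your use of $\mathcal{RP}(G)=K_1+\mathcal{RP}^*(G)$, so that acyclic is equivalent to $\mathcal{RP}^*(G)$ being edgeless, is a slightly cleaner route than the paper's appeal to triangle-freeness) and then invoke the classification of Cheng, Deaconescu, Lang and Shi. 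Up to that point everything is sound.

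The genuine gap is in the endgame of part (2), at precisely the spot you flag as delicate: the classification you propose to cite says that a finite group all of whose non-trivial elements have prime order is (i) a $p$-group of exponent $p$, (ii) a Frobenius group of order $p^nq$ with kernel of exponent $p$ and complement of order $q$, or (iii) $A_5$. Since $\pi_e(A_5)=\{1,2,3,5\}$, the graph $\mathcal{RP}^*(A_5)$ is edgeless and $\mathcal{RP}(A_5)\cong K_{1,59}$ is acyclic, yet $A_5$ is neither a $p$-group of exponent $p$ nor of order $p^nq$. So the case $t\geq 3$ cannot be closed in the way you (and the paper) intend: the literature you would lean on there refutes, rather than finishes, the claim, and the statement needs $G\cong A_5$ added to the list. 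Your $t=2$ Frobenius analysis is consistent with the classification, but be aware that the paper's own one-line proof of (2) has exactly this defect, so "appealing to the known classification" for $t\geq 3$ does not complete a proof of the proposition as written.
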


\begin{proof}
	(1) If $\pi_e(G)\nsubseteq \{1,4\}\cup \mathbb P$, then by cases (1) and (2) of Theorem~\ref{star tree}, $\mathcal{RP}^*(G)$ has a cycle. Otherwise, $\mathcal{RP}^*(G)$ is a disjoint union of some copies of star graphs and isolated vertices. So $\mathcal{RP}^*(G)$ is acyclic.
	
	(2) The proof follows from Corollary~\ref{c3 free}(2) and by the classification of finite groups with non-trivial
	elements are of prime order given in~\cite{cheng prime elt}. 
\end{proof}

\begin{pro}\label{abelian acyclic}
	Let $G$ be a finite abelian group and $p$ be a prime. Then we have the following:
	
	\begin{enumerate}[\normalfont(1)]
		\item $\mathcal{RP}^*(G)$ is acyclic if and only if $G\cong \mathbb Z_p^n$, $\mathbb Z_4^n$ or $\mathbb Z_4^m\times \mathbb Z_2^n$, where $n$, $m\geq 1$.
		
		\item $\mathcal{RP}(G)$ is acyclic if and only if $G\cong \mathbb Z_p^n$, where $n\geq 1$.
		
	\end{enumerate} 
\end{pro}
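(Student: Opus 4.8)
The plan is to reduce both parts to Proposition~\ref{acyclic} by invoking the structure theorem for finite abelian groups, so that the whole argument becomes a computation of $\pi_e(G)$. For part (1), Proposition~\ref{acyclic}(1) tells us that $\mathcal{RP}^*(G)$ is acyclic if and only if $\pi_e(G)\subseteq\{1,4\}\cup\mathbb P$, so it suffices to classify the finite abelian $G$ with this property. First I would show that such a $G$ must be a $p$-group: if two distinct primes $p,q$ divided $|G|$, then, using the primary decomposition $G\cong\prod_i P_i$ and the fact that an abelian group contains an element of order equal to the product of prime powers coming from distinct primary components, $G$ would have an element of order $pq$, a composite number different from $4$ and not a prime, contradicting $\pi_e(G)\subseteq\{1,4\}\cup\mathbb P$.

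Next I would split on the prime $p$. If $p$ is odd, then $\pi_e(G)\subseteq\{1,4\}\cup\mathbb P$ forces the exponent of $G$ to equal $p$ (an element of order $p^2$ has composite order, and $p^2\neq4$ when $p$ is odd), so by the structure theorem $G\cong\mathbb Z_p^n$ for some $n\ge1$. If $p=2$, then since $\pi_e(G)$ consists of powers of $2$, the condition $\pi_e(G)\subseteq\{1,4\}\cup\mathbb P$ is equivalent to $\pi_e(G)\subseteq\{1,2,4\}$, i.e.\ to the exponent of $G$ dividing $4$; the structure theorem then gives $G\cong\mathbb Z_4^m\times\mathbb Z_2^n$ with $m,n\ge0$, and separating the subcases $m=0$ (giving $\mathbb Z_2^n$), $n=0$ (giving $\mathbb Z_4^m$) and $m,n\ge1$ (giving $\mathbb Z_4^m\times\mathbb Z_2^n$) yields exactly the three families in the statement, which are pairwise non-isomorphic so the list is irredundant. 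The converse is immediate: for each listed group $\pi_e\subseteq\{1,2,4\}\subseteq\{1,4\}\cup\mathbb P$, so $\mathcal{RP}^*(G)$ is acyclic by Proposition~\ref{acyclic}(1).

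For part (2), I would apply Proposition~\ref{acyclic}(2): $\mathcal{RP}(G)$ is acyclic if and only if $G$ is a $p$-group of exponent $p$ or a non-nilpotent group of order $p^nq$ all of whose non-trivial elements have order $p$ or $q$. Since every finite abelian group is nilpotent, the second alternative never occurs, so $\mathcal{RP}(G)$ is acyclic precisely when $G$ is an abelian $p$-group of exponent $p$, which by the structure theorem means exactly $G\cong\mathbb Z_p^n$ for some $n\ge1$; conversely $\mathbb Z_p^n$ has exponent $p$ and hence acyclic reduced power graph. I do not expect any genuine obstacle here—everything follows mechanically from Proposition~\ref{acyclic} once the primary decomposition is in hand; the only point requiring mild care is the bookkeeping in part (1), namely checking that ``$2$-group of exponent dividing $4$'' decomposes into precisely the three named (and mutually non-isomorphic) families.
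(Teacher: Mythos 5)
Your proposal is correct and follows essentially the same route as the paper: both reduce the problem to Proposition~\ref{acyclic}, use the fact that a finite abelian group with elements of two distinct prime orders has an element of composite order $p_1p_2$ to force $G$ to be a $p$-group, and then read off the classification from the constraint on the exponent via the structure theorem. Your treatment is slightly more explicit than the paper's (in particular, in part (2) you note that abelian groups are nilpotent, so the second alternative of Proposition~\ref{acyclic}(2) cannot occur), but the underlying argument is the same.
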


\begin{proof}
	
	(1) By Proposition~\ref{acyclic}(1), it is enough to determine the finite abelian groups whose elements order set is $\{1,4\}\cup \mathbb{P}$. Assume that $\pi_e(G)\subseteq \{1,4\}\cup  \mathbb P$. If $G$ has elements of order $p_1$ and $p_2$, where $p_1$, $p_2$ are two distinct prime divisors of $|G|$, then $G$ has an element of order $p_1p_2$. Consequently, $\pi_e(G)=\{1,p\}$ or $\{1,2,4\}$. In the former case we get $G\cong \mathbb Z_p^n$, $n\geq 1$ and in the later case we get $G\cong \mathbb Z_4^n$ or $\mathbb Z_4^m \times \mathbb Z_2^n$, where $n$, $m\geq 1$.
	
	(2) By Proposition~\ref{acyclic}(2), $\pi_e(G)\subseteq \mathbb{P}$. If the non-trivial elements in $G$ are of prime order, then $G\cong \mathbb Z_p^n$ and so $\mathcal{RP}(G)\cong K_{1,p^n-1}$, which is acyclic.  
\end{proof}

\begin{thm}\label{claw free1}
	Let $G$ be a finite group. Then $\mathcal{RP}(G)$ is claw-free if and only if $G\cong \mathbb Z_n$, $n=2, 3, 4$.
\end{thm}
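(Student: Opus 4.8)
The plan is to prove this in two directions. For the "if" direction, I would simply exhibit $\mathcal{RP}(\mathbb Z_n)$ explicitly for $n=2,3,4$: by the formula~\eqref{str of pm}, $\mathcal{RP}(\mathbb Z_2)=K_2$, $\mathcal{RP}(\mathbb Z_3)=K_1+\mathcal{RP}^*(\mathbb Z_3)=K_1+\overline{K_2}=K_{1,2}$ (or rather $K_3$ — note $\mathcal{RP}(\mathbb Z_3)=K_3$ since all three elements generate subgroups that are comparable via the trivial subgroup, wait: actually $\mathcal{RP}(\mathbb Z_3)$ has $e$ adjacent to both generators, but the two generators are non-adjacent, so it is $K_{1,2}=P_2$), and $\mathcal{RP}(\mathbb Z_4)=K_1+\mathcal{RP}^*(\mathbb Z_4)=K_1+K_{1,2}$, a graph on $4$ vertices that one checks directly contains no induced $K_{1,3}$. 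Each of these is small enough that claw-freeness is immediate by inspection.

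For the "only if" direction, I would argue by contradiction: assume $\mathcal{RP}(G)$ is claw-free and $G\not\cong\mathbb Z_2,\mathbb Z_3,\mathbb Z_4$. The identity $e$ is a natural candidate for the center of a claw, since $e$ is adjacent to every other vertex (as $\langle e\rangle\subset\langle u\rangle$ for all $u\neq e$). So it suffices to find three pairwise non-adjacent non-trivial elements: then together with $e$ they form an induced $K_{1,3}$. Three elements $x,y,z$ are pairwise non-adjacent in $\mathcal{RP}^*(G)$ when none of their cyclic subgroups contains another; in particular this holds if $o(x),o(y),o(z)$ are pairwise non-dividing, or more simply if $x,y,z$ each have prime order with at least two distinct primes involved, or if $G$ has at least three subgroups of the same prime order. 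The key step is a case analysis on $|G|$. If $|G|$ has two distinct prime divisors $p<q$, pick elements $x$ of order $p$ and $y$ of order $q$; by Cauchy these exist, and if additionally there is a third element $z$ of prime order not generating $\langle x\rangle$ or $\langle y\rangle$ (which happens unless $G$ has a unique subgroup of order $p$ and a unique one of order $q$ and these are the only subgroups of prime order — forcing $|G|=pq$ and $G$ cyclic, handled separately), we get a claw. For the cyclic case $G\cong\mathbb Z_{pq}$: here $\mathcal{RP}^*(\mathbb Z_{pq})$ is complete bipartite with parts of size $p+q-2\geq 2$ and $(p-1)(q-1)\geq 1$ by Theorem~\ref{complete partite main thm}, and taking the identity plus two vertices from the larger part plus one from the smaller — actually more carefully, the identity is adjacent to all, and two vertices of order $p$ plus one of order $q$ are pairwise nonadjacent, giving a claw once $p+q-2\geq 3$, i.e. always except we must double-check $\mathbb Z_6$. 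If $|G|$ is a prime power $p^m$: when $m=1$, $G\cong\mathbb Z_p$ and $\mathcal{RP}(G)=K_{1,p-1}$, which is claw-free iff $p-1\leq 2$, i.e. $p\in\{2,3\}$, yielding $\mathbb Z_2,\mathbb Z_3$; when $m\geq 2$, if $G$ is not cyclic or generalized quaternion it has at least two subgroups of order $p$ and hence (since $p\geq 2$, these contribute $\geq 3$ elements of order $p$ when $p\geq 2$... careful for $p=2$ two subgroups give $2$ nonidentity elements of order $2$, but a third can be found among elements of larger order or a third subgroup) one produces three pairwise non-adjacent elements; if $G\cong\mathbb Z_{p^m}$ with $m\geq 2$, then by~\eqref{str of pm} the bottom part $X_1$ has $p-1$ elements and $X_2$ has $p(p-1)\geq 2$ elements, all of $X_1\cup X_2$ together with $e$ — here $e$ plus two elements of $X_2$ plus one element of $X_1$: are two elements of $X_2$ nonadjacent? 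Yes if they generate different subgroups of order $p^2$, which requires $p\geq 3$ or $m\geq 3$; and one element of $X_1$ is nonadjacent to an element of $X_2$ only if... actually $X_1\subset$ every subgroup of order $p^2$ when the subgroup of order $p$ is unique, so this needs more care — instead use $e$, one element of order $p^2$, and... the cleanest route is: $\mathbb Z_{p^m}$ for $m\geq 2$ contains $\mathbb Z_{p^2}$ or is $\mathbb Z_8$ etc., and one exhibits the claw directly in $\mathbb Z_8$, $\mathbb Z_9$, $\mathbb Z_{p^2}$, then notes any larger cyclic group contains one of these as an induced subgraph on $\langle x\rangle$ for a suitable $x$ together with $e$. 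Finally $Q_8$: $\mathcal{RP}^*(Q_8)=K_{1,6}$, so $\mathcal{RP}(Q_8)=K_1+K_{1,6}$ contains $e$ adjacent to the three elements $i,j,k$ of order $4$ which are pairwise non-adjacent — a claw; so $Q_8$ is excluded, consistent with the statement.

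The main obstacle I anticipate is the bookkeeping in the prime-power case when $p=2$, where "two distinct subgroups of order $p$" only yields two elements of order $2$ rather than three, so claw-freeness is not immediately violated by order-$2$ elements alone; one must bring in an element of order $4$ (or a third involution) and verify the non-adjacencies carefully, in particular ruling out exactly $\mathbb Z_2,\mathbb Z_4,Q_8$ among $2$-groups of small order and showing every larger $2$-group — cyclic $\mathbb Z_{2^m}$ with $m\geq 3$, generalized quaternion $Q_{2^m}$ with $m\geq 4$, or any non-cyclic non-quaternion $2$-group — contains an induced claw. The rest of the case analysis is routine once the right three vertices are identified, and the earlier structural results (Theorem~\ref{complete partite main thm}, Corollary~\ref{connected p group}, the formula~\eqref{str of pm}, and the observation that $e$ is universal in $\mathcal{RP}(G)$) do most of the work.
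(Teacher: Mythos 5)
Your overall strategy is sound and you locate the right claws in every case, but your route is substantially longer than the paper's and leaves one sub-case genuinely unfinished. The paper's proof opens with a single reduction you do not make: if $G$ has an element $a$ of order at least $5$, then $\mathcal{RP}(\langle a\rangle)$ already contains an induced $K_{1,3}$ (for $o(a)\ne 6$ take $e$ together with three generators of $\langle a\rangle$, which are pairwise non-adjacent because they generate the \emph{same} subgroup; for $o(a)=6$ take $e$ with the involution and the two elements of order $3$). This forces every element order into $\{1,2,3,4\}$, hence $|G|\in\{2^n,3^m,2^n3^m\}$, and the remaining analysis collapses to a few lines: in the mixed case the prime-order elements give a claw with $e$; in the $p$-group case either there are at least two (hence, since the number of subgroups of order $p$ is $\equiv 1\pmod p$, at least three) subgroups of order $p$, yielding three pairwise non-adjacent elements of order $p$, or the subgroup of order $p$ is unique and the classification gives $\mathbb Z_2,\mathbb Z_3,\mathbb Z_4$ or $Q_8$, with $Q_8$ killed by $\{e,i,j,k\}$ exactly as you note. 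A related observation would have simplified several of your worries: two elements of \emph{equal} order are never adjacent in $\mathcal{RP}(G)$ (a proper containment $\langle x\rangle\subset\langle y\rangle$ forces $o(x)<o(y)$), so any order class with at least three elements immediately produces a claw with $e$. Your parenthetical ``non-adjacent \emph{if} they generate different subgroups of order $p^2$'' has this backwards — they are non-adjacent whether the subgroups coincide or are incomparable — which is why $\mathbb Z_{p^2}$ ($p\ge 3$), $\mathbb Z_8$ and $\mathbb Z_9$ all work without further conditions.

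The genuine gap is the case you yourself flag: a non-cyclic, non-generalized-quaternion $2$-group, where ``more than one subgroup of order $2$'' gives you only two involutions on the face of it, and your fallback (``a third can be found among elements of larger order or a third subgroup'') is not an argument — indeed an element of order $4$ is always adjacent to the involution it powers into, so it cannot be thrown into the independent triple blindly. The clean fix is the counting fact above (the number of subgroups of order $p$ in a group of order divisible by $p$ is $\equiv 1\pmod p$, so ``more than one'' means ``at least three'' when $p=2$), or simply the paper's exponent reduction, which makes this whole branch unnecessary. With that fact supplied, your proof closes; without it, the only-if direction is incomplete for $2$-groups.
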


\begin{proof}
	If $G$ has an element $a$ with $o(a)\geq 5$, then $\mathcal{RP}(\left\langle a \right\rangle )$ has $K_{1,3}$ as an induced subgraph of $\mathcal{RP}(G)$. So we assume that the order of any element in $G$ is at most 4. Then $|G|$ is one of the following: $2^n$, $3^m$ or $2^n3^m$, $n,m\geq 1$. Assume that $|G|=2^n3^m$, $n,m\geq 1$. Then the elements in $G$ of order 2 and 3 together with the identity element in $G$ forms $K_{1,3}$ as an induced subgraph of $\mathcal{RP}(G)$. Next we assume that $|G|=p^n$, $p=2$ or 3 and $n\geq 1$. 
	
	If $G$ has more than one cyclic subgroup of order $p$, then the elements of order $p$ together with the identity element in $G$ forms $K_{1,3}$  as an induced subgraph of $\mathcal{RP}(G)$.
	
	If $G$ has a unique subgroup of order $p$, then $G\cong \mathbb Z_2$, $\mathbb Z_3$, $\mathbb Z_4$ or $Q_8$. If $G\cong Q_8$, then its  elements  of order 4 together with the element of order 2 forms $K_{1,3}$ as an induced subgraph of $\mathcal{RP}(G)$. In the remaining cases, we have $\mathcal{RP}(\mathbb Z_2)\cong K_2$,    $\mathcal{RP}(\mathbb Z_3)\cong K_{1,2}$ and  $\mathcal{RP}(\mathbb Z_4)\cong K_1+K_{1,2}$, which are claw-free.  
\end{proof}

\begin{thm}\label{claw free}
	Let $G$ be a finite group. Then $\mathcal{RP}^*(G)$ is claw free if and only if $\pi_e(G)\subseteq \{1,4\}\cup \mathbb{P}$ and any two cyclic subgroups of order 4 in $G$ intersect trivially.
\end{thm}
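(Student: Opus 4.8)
The plan is to prove both implications by hunting for an induced $K_{1,3}$, producing one explicitly when either stated condition fails, and showing that under both conditions $\mathcal{RP}^*(G)$ splits into tiny claw-free components.

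For the forward implication, assume $\mathcal{RP}^*(G)$ is claw-free. I would first show $\pi_e(G)\subseteq\{1,4\}\cup\mathbb P$. If not, take $x\in G$ of composite order $m\neq 4$; a suitable power of $x$ then generates a subgroup $H$ isomorphic to $\mathbb Z_{pq}$, to $\mathbb Z_{p^2}$ with $p$ odd, or to $\mathbb Z_8$, according as $m$ has two distinct prime divisors, is an odd prime power, or is a power of $2$ (hence at least $8$). Since $\mathcal{RP}^*(H)$ is an induced subgraph of $\mathcal{RP}^*(G)$ and, by~\eqref{str of pm} and the proof of Theorem~\ref{complete partite main thm}, $\mathcal{RP}^*(\mathbb Z_{pq})=K_{p+q-2,(p-1)(q-1)}$, $\mathcal{RP}^*(\mathbb Z_{p^2})=K_{p-1,p(p-1)}$ and $\mathcal{RP}^*(\mathbb Z_8)=K_{1,2,4}$, in each case some part has size at least $3$; three vertices of that part together with one vertex from another part induce a claw, a contradiction. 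Next, suppose $\langle a\rangle\neq\langle b\rangle$ are cyclic subgroups of order $4$ with $\langle a\rangle\cap\langle b\rangle\neq\{e\}$. Since $\mathbb Z_4$ has a unique subgroup of order $2$, this forces $a^2=b^2=:z$; then $z$ is adjacent in $\mathcal{RP}^*(G)$ to each of the three distinct order-$4$ elements $a,a^3,b$, whereas no two of $a,a^3,b$ are adjacent, because $\langle a\rangle=\langle a^3\rangle$ and $\langle b\rangle$ are distinct cyclic subgroups of the same order, so neither strictly contains another. Hence $\{z;a,a^3,b\}$ induces $K_{1,3}$, again a contradiction.

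For the converse, assume $\pi_e(G)\subseteq\{1,4\}\cup\mathbb P$ and that distinct cyclic subgroups of order $4$ intersect trivially. Under the first hypothesis, a short check shows every edge of $\mathcal{RP}^*(G)$ joins an element of order $2$ to an element of order $4$ (since $o(u)\mid o(v)$ with $o(u)<o(v)$ and both orders in $\{4\}\cup\mathbb P$ forces $(o(u),o(v))=(2,4)$), each order-$4$ vertex $y$ has $y^2$ as its only neighbour, and each order-$2$ vertex has only order-$4$ neighbours; hence $\mathcal{RP}^*(G)$ is a disjoint union of isolated vertices and stars centred at order-$2$ elements (compare the proof of Proposition~\ref{acyclic}). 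It then remains to bound the number of leaves at a star centre $c$ of order $2$: each leaf is an order-$4$ element squaring to $c$, and for every cyclic subgroup $H\cong\mathbb Z_4$ with $c\in H$ both generators of $H$ occur as leaves; so three or more leaves would force two distinct such subgroups $H_1,H_2$, giving $c\in H_1\cap H_2\neq\{e\}$ and contradicting the hypothesis. Thus every component is $K_1$ or $K_{1,2}$, and $\mathcal{RP}^*(G)$ is claw-free.

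I expect the necessity direction to be routine: a finite case split fed by the already-recorded structure of $\mathcal{RP}^*$ of small cyclic groups. The step carrying the real content is the sufficiency direction, namely recognizing that the condition on order-$4$ subgroups is exactly what caps each star component at two leaves; this in turn rests on the small but decisive observation that the two generators of a cyclic group of order $4$ have the same square, so the leaves of a star come in pairs, one pair for each order-$4$ subgroup through the centre.
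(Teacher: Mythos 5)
Your proof is correct and follows essentially the same strategy as the paper's: exhibit an induced $K_{1,3}$ whenever an element of order $pq$, odd $p^2$, or $8$ exists or two order-$4$ cyclic subgroups meet nontrivially, and show that under both hypotheses $\mathcal{RP}^*(G)$ is a disjoint union of copies of $K_{1,2}$ and isolated vertices. Your write-up is in fact somewhat more careful than the paper's at two points: the explicit reduction of an arbitrary composite order $m\neq 4$ to one of the three small cyclic subgroups, and the verification that each star centre has exactly $0$ or $2$ leaves.
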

\begin{proof}
	Let $a$ be a non-trivial element in $G$. If $o(a)=p^2$, where $p>2$ is a prime, then the elements of order $p$ and $p^2$ in $\left\langle a \right\rangle $ are  adjacent with each other in $\mathcal{RP}^*(G)$ and so $\mathcal{RP}^*(G)$ has $K_{p-1,p(p-1)}$ as an induced subgraph. In particular, it has $K_{1,3}$ as an induced subgraph. If $o(x)=8$, then the element of order 2 together with the elements of order 8 in $\left\langle a \right\rangle $ forms $K_{1,3}$ as an induced subgraph of $\mathcal{RP}^*(G)$. Similarly, if $o(a)=pq$, where $p$, $q$ are distinct primes, then the elements of order $pq$ together with the elements of order $p$ and $q$ in $\left\langle a \right\rangle $ forms $K_{1,3}$ as an induced subgraph of $\mathcal{RP}^*(G)$. Now we assume that  $\pi_e(G)\subseteq \{1,4\}\cup \mathbb{P}$. Let $H$ and $K$ be cyclic subgroups of $G$ of order 4. If they intersect non-trivially, then the non-trivial elements in $H\cup K$ forms $K_{1,3}$ as an induced subgraph of $\mathcal{RP}^*(G)$. If any two cyclic subgroups of order 4 in $G$ intersect trivially, then $\mathcal{RP}^*(G)$ is the disjoint union of the graph $K_{1,2}$ and isolated vertices  and so $\mathcal{RP}^*(G)$ is claw free. 
\end{proof}

The following result shows that a cycle can not be realized as the reduced power graph (resp. proper reduced power graph) of any finite group.

\begin{cor}
	Let $G$ be a finite group. Then $\mathcal{RP}(G)\ncong C_n$ and $\mathcal{RP}^*(G)\ncong C_n$ for any $n\geq 3$.
\end{cor}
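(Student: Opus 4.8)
The plan is to deduce this from the two claw-freeness classifications just proved, since every cycle $C_n$ with $n\geq 3$ is claw-free: $C_3$ has only three vertices, and $C_n$ with $n\geq 4$ has maximum degree $2$, so in either case $C_n$ contains no induced $K_{1,3}$.

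For $\mathcal{RP}(G)$, I would argue by contradiction. If $\mathcal{RP}(G)\cong C_n$ for some $n\geq 3$, then $\mathcal{RP}(G)$ is claw-free, so Theorem~\ref{claw free1} forces $G\cong \mathbb Z_2$, $\mathbb Z_3$ or $\mathbb Z_4$. I would then simply record the three graphs: $\mathcal{RP}(\mathbb Z_2)\cong K_2$, $\mathcal{RP}(\mathbb Z_3)\cong K_{1,2}$, and $\mathcal{RP}(\mathbb Z_4)\cong K_1+K_{1,2}$ (the last identity already appears in the proof of Theorem~\ref{claw free1}). The first two are trees and the third has a vertex of degree $3$, so none of them is a cycle, a contradiction.

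For $\mathcal{RP}^*(G)$, again suppose $\mathcal{RP}^*(G)\cong C_n$ for some $n\geq 3$. Then $\mathcal{RP}^*(G)$ is claw-free, so Theorem~\ref{claw free} gives $\pi_e(G)\subseteq \{1,4\}\cup\mathbb P$ together with the condition that any two cyclic subgroups of order $4$ in $G$ intersect trivially. But the proof of Theorem~\ref{claw free} shows that under exactly these hypotheses $\mathcal{RP}^*(G)$ is a disjoint union of copies of $K_{1,2}$ together with isolated vertices; in particular it is a forest, hence has no cycle, contradicting $\mathcal{RP}^*(G)\cong C_n$. (Equivalently, one may invoke Proposition~\ref{acyclic}(1), which already says $\mathcal{RP}^*(G)$ is acyclic in this case.)

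There is essentially no real obstacle here: the only care needed is to enumerate the three small cases for $\mathcal{RP}(G)$ correctly and to quote the structural description of $\mathcal{RP}^*(G)$ extracted from the proof of Theorem~\ref{claw free}. As an alternative route for the first part one can avoid Theorem~\ref{claw free1} entirely by using $\mathcal{RP}(G)=K_1+\mathcal{RP}^*(G)$: the identity vertex is then adjacent to every other vertex, which is impossible in $C_n$ for $n\geq 4$, while $C_3=K_1+K_2$ would require $\mathcal{RP}^*(G)\cong K_2$, forcing $|G|=3$ and hence $G\cong\mathbb Z_3$, for which $\mathcal{RP}^*(\mathbb Z_3)$ is edgeless, again a contradiction.
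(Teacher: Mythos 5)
Your argument is correct. For the $\mathcal{RP}^*(G)$ half it is exactly the paper's proof: a cycle is claw-free, so the structural conclusion extracted from the proof of Theorem~\ref{claw free} (equivalently Proposition~\ref{acyclic}(1)) makes $\mathcal{RP}^*(G)$ a disjoint union of copies of $K_{1,2}$ and isolated vertices, hence acyclic. For the $\mathcal{RP}(G)$ half you take a slightly longer route than the paper: you invoke the full classification of Theorem~\ref{claw free1} to reduce to $\mathbb Z_2$, $\mathbb Z_3$, $\mathbb Z_4$ and then inspect the three graphs, whereas the paper argues in one line that the identity vertex has degree $|G|-1$, which must equal $2$ in a cycle, forcing $|G|=3$ and $\mathcal{RP}(\mathbb Z_3)\cong K_{1,2}$, not a cycle. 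Your own ``alternative route'' via $\mathcal{RP}(G)=K_1+\mathcal{RP}^*(G)$ is essentially that degree argument, so you have both proofs in hand; the classification-based version costs a heavier citation but nothing is wrong with it, and all three case checks ($K_2$, $K_{1,2}$, $K_1+K_{1,2}$) are accurate.
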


\begin{proof} If $\mathcal{RP}(G)\cong C_n$, $n\geq 3$, then $deg_{\mathcal{RP}(G)}(a)=2$ for each non-trivial element $a$ in $G$. Also $deg_{\mathcal{RP}(G)}(e)=|G|-1$. It follows that $|G|=3$. Then $G\cong \mathbb Z_3$ and so $\mathcal{RP}(G)\cong P_3$, which is a contradiction to our assumption. 
	If $\mathcal{RP}^*(G)\cong C_n$, $n\geq 3$, then it is claw-free. So by the proof of Theorem~\ref{claw free}, $\mathcal{RP}^*(G)$ is the disjoint union of the graph $K_{1,2}$ and isolated vertices, which is a contradiction to our assumption that $\mathcal{RP}^*(G)$ is a cycle.  
\end{proof}

\section{Cut vertices and Cut edges}

A cut vertex (cut edge) of a graph is a vertex (edge) whose removal increases the number of components of the graph. A connected graph $\Gamma$ is said to be $k-$connected if there is no set of $k-1$ vertices whose removal disconnects $\Gamma$. 

Note that for any group $G$, if $x$ is a cut vertex of $\mathcal{P}^*(G)$, then $x$ is a cut vertex of $\mathcal{RP}^*(G)$, since $\mathcal{RP}^*(G)$ is a subgraph of $\mathcal{P}^*(G)$. But the converse of this statement is not true. For example, $2$ is a cut vertex of $\mathcal{RP}^*(\mathbb Z_4)$, whereas $\mathcal{P}^*(\mathbb Z_4)$ has no cut vertex. Notice that the degree of 2 in $\mathcal{RP}^*(\mathbb Z_4)$ is 2.  In the next theorem, we show that the degree of a vertex in $\mathcal{RP}^*(G)$ is at least three is a necessary condition to hold the converse of this statement for that vertex.

\begin{pro}\label{cut}
	Let $G$ be a finite group and let $x$ be an element in $G$ with $deg_{\mathcal{RP}^*(G)}(x)\geq 3$. Then $x$ is a cut vertex in $\mathcal{RP}^*(G)$ if and only if $x$ is a cut vertex in $\mathcal{P}^*(G)$.
\end{pro}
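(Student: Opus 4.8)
The plan rests on a precise description of how $\mathcal{P}^*(G)$ differs from $\mathcal{RP}^*(G)$: the two graphs share the vertex set $G\setminus\{e\}$, and $\mathcal{P}^*(G)$ is $\mathcal{RP}^*(G)$ together with, for every cyclic subgroup $C\le G$, all edges among the generators of $C$ (equivalently, all pairs $\{u,v\}$ with $\langle u\rangle=\langle v\rangle$); moreover any two generators of a common cyclic subgroup have the same open neighbourhood in $\mathcal{RP}^*(G)$, and I will call such vertices false twins. The implication ``$x$ a cut vertex of $\mathcal{P}^*(G)$ $\Rightarrow$ $x$ a cut vertex of $\mathcal{RP}^*(G)$'' is just the remark recorded right before the statement; in brief, a pair of neighbours $a,b$ of $x$ separated by $x$ in $\mathcal{P}^*(G)$ must already be neighbours of $x$ in $\mathcal{RP}^*(G)$ --- a neighbour of $x$ that is not an $\mathcal{RP}^*(G)$-neighbour is a further generator of $\langle x\rangle$, hence a false twin of $x$, hence adjacent in $\mathcal{P}^*(G)$ to the other neighbour, contradicting the separation --- and then the path $a-x-b$ together with $\mathcal{RP}^*(G)-x\subseteq\mathcal{P}^*(G)-x$ shows $x$ separates $a$ and $b$ in $\mathcal{RP}^*(G)$ as well.

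For the reverse implication I would assume $x$ is a cut vertex of $\mathcal{RP}^*(G)$ with $deg_{\mathcal{RP}^*(G)}(x)\ge 3$, write $N=N_{\mathcal{RP}^*(G)}(x)$, and let $A_1,\dots,A_m$ ($m\ge 2$) be the components of $\mathcal{RP}^*(G)-x$ that meet $N$; the aim is to prove that adjoining the twin-edges to $\mathcal{RP}^*(G)-x$ cannot fuse $A_1,\dots,A_m$ into a single component. The crux is to classify every edge of $\mathcal{P}^*(G)$ joining two distinct components of $\mathcal{RP}^*(G)-x$: such an edge is a twin-edge $\{u,v\}$, $\langle u\rangle=\langle v\rangle$; as $u,v$ are false twins, $N_{\mathcal{RP}^*(G)-x}(u)=N_{\mathcal{RP}^*(G)-x}(v)$, and this common set must be empty (a common neighbour would keep $u,v$ in one component), so $N_{\mathcal{RP}^*(G)}(u)\subseteq\{x\}$; since the component of $u$ meets $N$ it is not an isolated singleton, so in fact $N_{\mathcal{RP}^*(G)}(u)=\{x\}$, i.e.\ $u$ is a pendent vertex, whence $o(u)=4$ by Lemma~\ref{pendent vertex}. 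Therefore $\langle u\rangle$ is a maximal cyclic subgroup of order $4$, $x=u^2$, $v=u^3$, and the two fused components are the singletons $\{u\}$ and $\{u^3\}$, both of which meet $N$ and so appear among $A_1,\dots,A_m$.

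With this classification the proof splits into two cases. If no two of $A_1,\dots,A_m$ are joined by an edge of $\mathcal{P}^*(G)-x$, then --- since by the crux every between-component twin-edge lies between two of the $A_i$ --- no fusion happens at all and $A_1,A_2$ stay in distinct components of $\mathcal{P}^*(G)-x$, so picking a neighbour of $x$ in each shows $x$ is a cut vertex of $\mathcal{P}^*(G)$. If some pair is fused, it must be $A_1=\{u\}$, $A_2=\{u^3\}$ with $x=u^2$; here the hypothesis $deg_{\mathcal{RP}^*(G)}(x)\ge 3$ yields a neighbour $c$ of $x$ with $c\notin\{u,u^3\}$, and because $\langle u\rangle$ is maximal cyclic of order $4$ the component of $u$ in $\mathcal{P}^*(G)-x$ is exactly $\{u,u^3\}$, so $c$ and $u$ lie in different components of $\mathcal{P}^*(G)-x$ while the path $c-x-u$ joins them in $\mathcal{P}^*(G)$; once more $x$ is a cut vertex of $\mathcal{P}^*(G)$. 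I expect the crux to be the main obstacle: one has to see that the only way the twin-edges can re-connect the graph across $x$ is the order-$4$ situation exemplified by $x=2$ in $\mathcal{RP}^*(\mathbb{Z}_4)$, which is precisely the situation (with $deg_{\mathcal{RP}^*(G)}(x)=2$) that the degree hypothesis removes.
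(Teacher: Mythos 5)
Your proof is correct, and it reorganizes the argument rather than reproducing the paper's. The paper fixes a pair $y,z$ separated by $x$ in $\mathcal{RP}^*(G)$ and reroutes: if $y,z$ are non-adjacent in $\mathcal{P}^*(G)$, a shortest $y$--$z$ path in $\mathcal{P}^*(G)$ avoiding $x$ automatically has consecutive vertices generating distinct cyclic subgroups and hence already lies in $\mathcal{RP}^*(G)$, a contradiction; if they are adjacent, they are twins and one lands in exactly your order-$4$ pendant configuration. You replace the shortest-path rerouting by a global classification of the edges of $\mathcal{P}^*(G)-x$ that cross components of $\mathcal{RP}^*(G)-x$; this buys a sharper structural statement (the only fusion touching a component that meets $N(x)$ is $\{u\}$ with $\{u^3\}$, $o(u)=4$, $x=u^2$) and makes transparent why $deg\geq 3$ is exactly the right hypothesis, at the cost of a longer setup. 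Your handling of the easy direction is also more careful than the paper's one-line ``since $\mathcal{RP}^*(G)$ is a subgraph,'' which is not a valid inference for arbitrary spanning subgraphs without your false-twin observation. One phrase is overstated: ``every between-component twin-edge lies between two of the $A_i$'' fails for twin-edges joining isolated vertices of $\mathcal{RP}^*(G)$; what your crux actually establishes, and all the case analysis needs, is that every such edge \emph{incident to} some $A_i$ joins two of the $A_i$. This does not affect the validity of the argument.
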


\begin{proof}
	Assume that $x$ is a cut vertex in $\mathcal{RP}^*(G)$. Then there exist $y$, $z\in G$ such that every $y-z$ path in $\mathcal{RP}^*(G)$ contains $x$. 
	
	\noindent \textbf{Case 1.}	Let $yz$ is an edge in $\mathcal{P}^*(G)$.\\ Then $\left\langle y\right\rangle =\left\langle z\right\rangle $ and $y$, $z$ are pendent vertices in $\mathcal{RP}^*(G)$, so $y-x-z$ is the only path joining $y$, $z$ in $\mathcal{RP}^*(G)$. It follows that both $y$ and $z$ are adjacent only to the vertex $x$ in $\mathcal{P}^*(G)$. Since $deg_{\mathcal{RP}^*(G)}(x)\geq 3$,  $\mathcal{P}^*(G)-\{x\}$ is disconnected and so $x$ is a cut vertex in $\mathcal{P}^*(G)$.
	
	\noindent \textbf{Case 2.}	Let $y$ be non adjacent to $z$ in $\mathcal{P}^*(G)$.\\ Suppose $x$ is not a cut vertex in $\mathcal{P}^*(G)$, then we can choose a shortest $y-z$ path in $\mathcal{P}^*(G)$, which does not contain $x$, let it be $P(y,z)$. Then $P(y,z):=y(=w_1)-w_2-\cdots -w_{n-1}-(w_n=)z$, where $\left\langle w_i\right\rangle \neq \left\langle w_{i+1} \right\rangle $, $i=1,2,\ldots,k$. It follows that $P(y,z)$ is also a path in $\mathcal{RP}^*(G)$ not containing $x$, a contradiction. Hence $x$ is a cut vertex in $\mathcal{P}^*(G)$. 
	
	Converse is clear.  
\end{proof}


%

Note that, for any group $G$, if $\mathcal{RP}(G)$ has a cut vertex, then it must be the identity element in $G$. Converse of this statement is not true. For example, the identity element is not a cut vertex of $\mathcal{RP}(\mathbb Z_4)$, since $\mathcal{RP}(\mathbb Z_4)-\{e\}=K_{1,2}$ . In the following result, we give a necessary condition for a vertex  of $\mathcal{RP}^*(G)$ to be its cut vertex.

\begin{pro}\label{cut1}
	Let $G$ be a finite group. If an element $x$ in $G$ is a cut vertex in $\mathcal{RP}^*(G)$, then $o(x)=2$.
\end{pro}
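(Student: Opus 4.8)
The plan is to prove directly that an element of order at least $3$ is never a cut vertex of $\mathcal{RP}^*(G)$; since a vertex of $\mathcal{RP}^*(G)$ lies in $G\setminus\{e\}$ and hence has order at least $2$, this forces $o(x)=2$. The key observation is that adjacency in $\mathcal{RP}^*(G)$ depends only on the cyclic subgroup generated by a vertex: $w$ is adjacent to $x$ if and only if $\langle w\rangle\subsetneq\langle x\rangle$ or $\langle x\rangle\subsetneq\langle w\rangle$. Consequently, if $x'$ is another generator of $\langle x\rangle$ with $x'\neq x$, then $x$ and $x'$ have exactly the same set of neighbours in $\mathcal{RP}^*(G)$, and moreover $x$ and $x'$ are not adjacent to each other, since $\langle x\rangle=\langle x'\rangle$ is not a proper containment. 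Such a vertex $x'$ exists precisely when $\langle x\rangle$ has more than one generator, i.e.\ when $o(x)\geq 3$.

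First I would assume $o(x)\geq 3$ and fix such a twin vertex $x'$. Then I would show that $x$ cannot be a cut vertex: if it were, there would exist vertices $y,z\neq x$ lying in the same component of $\mathcal{RP}^*(G)$ such that every $y$--$z$ path contains $x$. Choose a shortest $y$--$z$ path $P\colon y=v_0-v_1-\cdots-v_k=z$; it has no repeated vertices and $v_i=x$ for some $0<i<k$, so $v_{i-1}$ and $v_{i+1}$ are neighbours of $x$, hence also of $x'$. Now reroute through $x'$. If $x'$ does not occur on $P$, the walk $v_0-\cdots-v_{i-1}-x'-v_{i+1}-\cdots-v_k$ avoids $x$. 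If $x'=v_j$ for some $j$ --- necessarily $j\notin\{i-1,i,i+1\}$, since $x'$ is not adjacent to $x=v_i$ --- then, using that $v_{i-1}$ and $v_{i+1}$ are neighbours of $x'=v_j$, one of the walks $v_0-\cdots-v_j-v_{i+1}-\cdots-v_k$ (if $j<i$) or $v_0-\cdots-v_{i-1}-v_j-\cdots-v_k$ (if $j>i$) is again a $y$--$z$ walk not containing $x$. In every case we obtain a $y$--$z$ walk avoiding $x$, which contains a $y$--$z$ path avoiding $x$, contradicting the choice of $y,z$. Hence $o(x)\leq 2$, and since $x\neq e$, we get $o(x)=2$.

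The routine verifications --- that equal cyclic subgroups yield identical neighbourhoods in $\mathcal{RP}^*(G)$, and that a cyclic group of order at least $3$ has at least two generators --- are immediate. The only point requiring care is the possibility that the twin $x'$ already lies on the chosen $y$--$z$ path; this is exactly why I would take a \emph{shortest} path and then short-circuit through the occurrence of $x'$ as above, and I expect this to be the only genuinely delicate step in the argument.
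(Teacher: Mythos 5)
Your proposal is correct and uses essentially the same argument as the paper: replace $x$ on a shortest $y$--$z$ path by another generator $x'$ of $\langle x\rangle$, which has the same neighbourhood, to produce a $y$--$z$ walk avoiding $x$. Your explicit treatment of the case where $x'$ already lies on the path is a point the paper handles only implicitly (by asserting that no vertex of a shortest path generates $\langle x\rangle$), so if anything your write-up is slightly more careful.
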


\begin{proof}
	Let $x$ be a cut vertex in $\mathcal{RP}^*(G)$. Then there exist $y$, $z\in G$ such that every $y-z$ path in $\mathcal{RP}^*(G)$ contains $x$. Let $P(y,z)$ be a shortest $y-z$ path in $\mathcal{RP}^*(G)$. Then $P(y,z):y(=w_1)-w_2-\cdots-w_i-x-w_{i+1}-\cdots-(w_n=)z$, where $\left\langle w_r\right\rangle \neq \left\langle x \right\rangle $, $r=1,2,\ldots,n$. Suppose $o(x)\neq 2$, then there exist $x'\in G$ such that $x\neq x'$ and $\left\langle x' \right\rangle = \left\langle x \right\rangle $. Consequently, $P'(y,z):y(=w_1)-w_2-\cdots-w_i-x'-w_{i+1}-\cdots-(w_n=)z$ is a $y-z$ path in $\mathcal{RP}^*(G)$ not containing $x$, a contradiction to $x$ being a cut vertex of $\mathcal{RP}^*(G)$. Therefore, $o(x)=2$.  
\end{proof}

Since any cut vertex of $\mathcal{P}^*(G)$ is a cut vertex of $\mathcal{RP}^*(G)$, we get the following result as an immediate consequence of the previous theorem.

\begin{cor}\label{power cut vertex}
	Let $G$ be a finite group. If an element $x$ in $G$ is a cut vertex in $\mathcal{P}^*(G)$, then $o(x)=2$.
\end{cor}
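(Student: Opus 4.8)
The plan is to obtain this as an immediate corollary of Proposition~\ref{cut1}, using the observation recorded at the start of Section~4 that a cut vertex of $\mathcal{P}^*(G)$ is always a cut vertex of $\mathcal{RP}^*(G)$. So the first (and essentially only) step is: let $x\in G$ be a cut vertex of $\mathcal{P}^*(G)$, and argue that $x$ is a cut vertex of $\mathcal{RP}^*(G)$. For this, recall that $\mathcal{RP}^*(G)$ and $\mathcal{P}^*(G)$ have the same vertex set $G\setminus\{e\}$ and that every edge of $\mathcal{RP}^*(G)$ is an edge of $\mathcal{P}^*(G)$. Since $x$ is a cut vertex of $\mathcal{P}^*(G)$, there are vertices $y,z$ such that every $y$–$z$ path in $\mathcal{P}^*(G)$ passes through $x$; as any $y$–$z$ path in $\mathcal{RP}^*(G)-x$ would in particular be a $y$–$z$ path in $\mathcal{P}^*(G)-x$, no such path exists, so $x$ separates $y$ and $z$ in $\mathcal{RP}^*(G)$ as well. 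Hence $x$ is a cut vertex of $\mathcal{RP}^*(G)$, and then Proposition~\ref{cut1} gives $o(x)=2$.

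Since the deduction is one line, there is no genuine obstacle here; the only point worth spelling out is the ``subgraph $\Rightarrow$ preservation of separation'' step just described, which is purely graph-theoretic and independent of the group structure.

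For completeness I would also mention that a self-contained proof mirroring that of Proposition~\ref{cut1} is available. If $o(x)\neq 2$, choose $x'\neq x$ with $\left\langle x'\right\rangle=\left\langle x\right\rangle$; then $x$ and $x'$ have the same neighbours in $\mathcal{P}^*(G)$ (adjacency in the power graph depends only on the generated cyclic subgroup) and $x$ is adjacent to $x'$. Given a shortest $y$–$z$ path through $x$, replacing the single occurrence of $x$ by $x'$ yields another $y$–$z$ walk; if $x'$ happens to already lie on the path, then by minimality it must be one of the two neighbours of $x$ on the path, and one shortcuts past $x$ directly (those two neighbours of $x$ are adjacent in $\mathcal{P}^*(G)$, since one of them generates $\left\langle x\right\rangle$). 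In every case this produces a $y$–$z$ walk avoiding $x$, contradicting that $x$ is a cut vertex of $\mathcal{P}^*(G)$. The only mildly delicate point in this alternative route is the degenerate case where $x'$ coincides with a path vertex, which is disposed of by minimality of the chosen path.
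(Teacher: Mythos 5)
Your main deduction is exactly the paper's: the corollary is obtained in one line from the observation at the start of Section~4 (every cut vertex of $\mathcal{P}^*(G)$ is a cut vertex of $\mathcal{RP}^*(G)$) together with Proposition~\ref{cut1}. The only caveat --- present in the paper's note as well --- is that showing $x$ separates $y$ and $z$ in $\mathcal{RP}^*(G)-x$ only yields a cut vertex if $y$ and $z$ are also joined by some path in $\mathcal{RP}^*(G)$ itself (separation after deletion is vacuous if they were already in different components); your self-contained second argument, which runs the proof of Proposition~\ref{cut1} directly inside $\mathcal{P}^*(G)$ and handles the degenerate case where $x'$ lies on the chosen shortest path, sidesteps this entirely and is correct.
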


\begin{rem}
	Converse of Proposition~\ref{cut1} is not true. For example,  the element 3 in $\mathbb Z_6$ has order 2, but $\mathcal{RP}^*(\mathbb Z_6)-\{3\}=K_{2,2}$ is connected.
\end{rem}

\begin{cor}
	Let $G$ be a group  of non-prime odd order. Then the following are equivalent:
	\begin{enumerate}[\normalfont(1)]
		
		\item 	$\mathcal{RP}^*(G)$ is connected,
		
		\item $\mathcal{RP}^*(G)$ is 2--connected,
		
		\item $\mathcal{P}^*(G)$ is 2--connected.		
	\end{enumerate}
\end{cor}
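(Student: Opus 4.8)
The plan is to obtain this corollary by combining three results already in the excerpt with the single structural observation that a group of odd order contains no element of order $2$. Since $|G|$ is odd and not prime, we may assume $|G|\ge 9$, so that $\mathcal{RP}^*(G)$ and $\mathcal{P}^*(G)$ each have enough vertices for $2$-connectedness to be meaningful, and $G$ has no involution.

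First I would establish $(1)\Leftrightarrow(2)$. The implication $(2)\Rightarrow(1)$ is immediate, since by definition a $2$-connected graph is connected. For $(1)\Rightarrow(2)$, suppose $\mathcal{RP}^*(G)$ is connected. By Proposition~\ref{cut1}, any cut vertex of $\mathcal{RP}^*(G)$ has order $2$; but $G$ has odd order and hence no such element, so $\mathcal{RP}^*(G)$ has no cut vertex, i.e. it is $2$-connected.

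Next I would tie $(1)$ to the power graph. Because $G$ has non-prime order, Theorem~\ref{connected RPG} gives that $\mathcal{RP}^*(G)$ is connected if and only if $\mathcal{P}^*(G)$ is connected. It then remains to see that, for this $G$, $\mathcal{P}^*(G)$ is connected if and only if it is $2$-connected: the forward direction is trivial, and conversely, if $\mathcal{P}^*(G)$ is connected then by Corollary~\ref{power cut vertex} any cut vertex of $\mathcal{P}^*(G)$ would be an involution, which $G$ lacks, so $\mathcal{P}^*(G)$ has no cut vertex and is $2$-connected. Chaining these equivalences yields $(1)\Leftrightarrow(3)$, closing the cycle of implications.

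There is essentially no analytic obstacle here; the only point requiring care is the bookkeeping around the definition of $2$-connectedness (ensuring the graphs are connected in the first place and large enough, which follows from $|G|\ge 9$) together with verifying that the hypotheses of Theorem~\ref{connected RPG} (non-prime order) and of Proposition~\ref{cut1} and Corollary~\ref{power cut vertex} are genuinely in force. Accordingly I would present the argument as the short chain $(2)\Rightarrow(1)\Leftrightarrow(\mathcal{P}^*(G)\text{ connected})\Leftrightarrow(3)$ together with $(1)\Rightarrow(2)$, invoking the odd-order hypothesis precisely at the two places where a cut vertex is ruled out.
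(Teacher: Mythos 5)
Your proposal is correct and follows essentially the same route as the paper: both use Proposition~\ref{cut1} (cut vertices of $\mathcal{RP}^*(G)$ have order $2$) to get $(1)\Leftrightarrow(2)$ in the absence of involutions, and both combine Corollary~\ref{power cut vertex} with Theorem~\ref{connected RPG} to transfer the statement to $\mathcal{P}^*(G)$. Your write-up merely makes explicit a couple of bookkeeping points (the size of $G$ and exactly where the odd-order hypothesis is used) that the paper leaves implicit.
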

\begin{proof}
	$(2)\Rightarrow (1)$ is obvious. Now we  prove $(1)\Rightarrow (2)$. Assume that $\mathcal{RP}^*(G)$ is connected. As a consequence of Proposition~\ref{cut1}, $\mathcal{RP}^*(G)$ has no cut vertex and so it is 2--connected. The proof of $(2)\Longleftrightarrow(3)$ follows from Corollary~\ref{power cut vertex} and \cite[Theorem 2.15]{raj RPG 2017}.  
\end{proof}

\begin{pro}
	Let $G$ be a finite cyclic group. Then $\mathcal{RP}^*(G)$ has a cut vertex if and only if $G\cong \mathbb Z_4$.
\end{pro}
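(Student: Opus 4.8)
The plan is to combine Proposition~\ref{cut1}, which says the only possible cut vertex of $\mathcal{RP}^*(G)$ is an element of order $2$, with a direct connectivity count. Write $G=\mathbb Z_n$. For the ``if'' direction, when $n=4$ one checks at once that $\mathcal{RP}^*(\mathbb Z_4)\cong K_{1,2}$, the path $g_1-x-g_2$ whose endpoints are the two generators of $\mathbb Z_4$ and whose central vertex $x$ has order $2$; deleting $x$ leaves two isolated vertices, so $x$ is a cut vertex.

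For the ``only if'' direction, assume $x$ is a cut vertex of $\mathcal{RP}^*(\mathbb Z_n)$. By Proposition~\ref{cut1}, $o(x)=2$, so $n$ is even and $x$ is the unique involution $n/2$. If $n=2$ then $\mathcal{RP}^*(\mathbb Z_2)$ is a single vertex, which has no cut vertex; hence $n$ is even and composite, and it suffices to assume $n\neq 4$ and reach a contradiction. The structural fact I would use is that in $\mathcal{RP}^*(\mathbb Z_n)$ every generator of $\mathbb Z_n$ is adjacent to every non-generator, since a generator's cyclic subgroup equals $\mathbb Z_n$ and therefore properly contains every proper cyclic subgroup. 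Consequently $\mathcal{RP}^*(\mathbb Z_n)$ contains the complete bipartite graph between the set $A$ of its $\phi(n)$ generators and the set $B$ of its $n-1-\phi(n)$ non-identity non-generators; as $n$ is composite, $A$ and $B$ are both non-empty, so $\mathcal{RP}^*(\mathbb Z_n)$ is connected. I then claim $|B|\geq 2$: the number $n-\phi(n)$ counts the integers of $\{1,\dots,n\}$ not coprime to $n$, and the least prime divisor $p$ of $n$ already contributes the $n/p$ multiples $p,2p,\dots,n$, so $n-\phi(n)\geq n/p\geq p\geq 2$; moreover $n/p=2$ would force $n=2p$ with $p$ its smallest prime, hence $p=2$ and $n=4$, which we excluded, so in fact $n/p\geq 3$ and $|B|=n-1-\phi(n)\geq n/p-1\geq 2$. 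Now $x\in B$ and $B\setminus\{x\}\neq\emptyset$, so after deleting $x$ every surviving vertex is either a generator (adjacent to any member of $B\setminus\{x\}$) or a member of $B\setminus\{x\}$ (adjacent to any generator); hence $\mathcal{RP}^*(\mathbb Z_n)-\{x\}$ is still connected, contradicting that $x$ is a cut vertex. Therefore $n=4$.

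I expect the only delicate point to be the elementary counting that singles out $n=4$ as the unique composite exception — in particular the subcase $n=2p$ — together with the accompanying remark that $\mathcal{RP}^*(\mathbb Z_n)$ is connected to begin with, so that ``increasing the number of components'' is meaningful; both follow from the same generator/non-generator adjacency observation, and nothing else in the argument is more than routine.
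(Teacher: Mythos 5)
Your proof is correct and follows essentially the same route as the paper: both directions rest on Proposition~\ref{cut1} forcing $o(x)=2$ together with the observation that every generator of $\mathbb Z_n$ is adjacent to every non-identity non-generator. The paper derives the contradiction by a case analysis on the witnessing pair $y,z$, while you instead verify directly that $\mathcal{RP}^*(\mathbb Z_n)-\{x\}$ stays connected via the explicit count $n-1-\varphi(n)\geq 2$ for composite $n\neq 4$; this is a cosmetic repackaging of the same idea, and your counting is sound.
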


\begin{proof}
	Let $G=\left\langle a\right\rangle $. Assume that $\mathcal{RP}^*(G)$ has a cut vertex, say $x$. Then there exist $y$, $z\in G$ such that every $y-z$ path in $\mathcal{RP}^*(G)$ contains $x$ and by Proposition~\ref{cut1}, $o(x)=2$. Suppose both $y$ and $z$ are non-generators of $G$, then $y-a-z$ is a $y-z$ path not containing $x$, which is a contradiction. Suppose $y$ is a generator of $G$ but not $z$, then $y$ and $z$ are adjacent in $\mathcal{RP}^*(G)$, which is a contradiction. Now we assume that both $y$ and $z$ are generators of $G$, then it forces that both $y$ and $z$ are adjacent only to $x$. Consequently, $o(y)=4=o(z)$, since $o(x)=2$. This implies that $G\cong \mathbb Z_4$. Converse is clear. 
\end{proof}

\begin{pro}\label{cut edge}
	Let $G$ be a finite group and let $e'=xy$ be an edge in $\mathcal{RP}^*(G)$. Then $e'$ is a cut edge in $\mathcal{RP}^*(G)$ if and only if at least one of $\left\langle x\right\rangle$ or $\left\langle y\right\rangle$ is either $\mathbb Z_4$ or a maximal cyclic subgroup of $G$ of order 4 when $G$ is noncyclic.
\end{pro}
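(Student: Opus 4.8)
The plan is to fix notation and then prove the two implications separately, using the elementary facts that a pendent edge is always a cut edge and, more generally, that an edge $e'=xy$ fails to be a cut edge precisely when $\mathcal{RP}^*(G)$ contains a cycle through $e'$ (equivalently, an $x$--$y$ walk avoiding $e'$). Since $xy$ is an edge of $\mathcal{RP}^*(G)$, after possibly interchanging the names of $x$ and $y$ we may assume $\left\langle x\right\rangle\subsetneq\left\langle y\right\rangle$, so $o(x)\mid o(y)$ and $o(x)<o(y)$. Note that the smaller subgroup $\left\langle x\right\rangle$ of a proper inclusion can be neither all of $G$ nor a maximal cyclic subgroup of $G$; hence, under this renaming, the condition in the statement is equivalent to: $\left\langle y\right\rangle$ is $\mathbb{Z}_4$ or a maximal cyclic subgroup of $G$ of order $4$ when $G$ is noncyclic.

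\emph{Reverse direction.} Assume $\left\langle y\right\rangle$ is of the above form; then $o(y)=4$, so Lemma~\ref{pendent vertex} gives that $y$ is a pendent vertex of $\mathcal{RP}^*(G)$. Hence $e'=xy$ is the only edge incident with $y$, and deleting it isolates $y$; therefore $e'$ is a cut edge.

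\emph{Forward direction.} Assume $e'=xy$ is a cut edge with $\left\langle x\right\rangle\subsetneq\left\langle y\right\rangle$. I would make two reductions. First, $o(x)$ must be prime: otherwise $\left\langle x\right\rangle$ contains a nontrivial proper subgroup $\left\langle z\right\rangle$, and since $\left\langle z\right\rangle\subsetneq\left\langle x\right\rangle\subsetneq\left\langle y\right\rangle$ the path $x-z-y$ avoids $e'$, a contradiction. Second, $\left\langle y\right\rangle$ must be a maximal cyclic subgroup of $G$ (equal to $G$ when $G$ is cyclic): otherwise $\left\langle y\right\rangle\subsetneq\left\langle w\right\rangle$ for some cyclic subgroup $\left\langle w\right\rangle$ of $G$, and then $x-w-y$ avoids $e'$. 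Now $x$ is a non-identity non-generator of $\left\langle y\right\rangle$, and I claim it is the only one: if $u\neq x$ were a second non-identity non-generator of $\left\langle y\right\rangle$, then choosing a generator $v\neq y$ of $\left\langle y\right\rangle$ (possible as $o(y)>o(x)\geq2$ forces $\left\langle y\right\rangle$ to have at least two generators), the four distinct vertices $x,y,u,v$ span the $4$-cycle $x-y-u-v-x$ --- each of $x,u$ generates a proper subgroup of $\left\langle y\right\rangle=\left\langle v\right\rangle$ and is therefore adjacent to both $y$ and $v$ --- and this cycle passes through $e'$, again a contradiction. So $\left\langle y\right\rangle$ has a unique non-identity non-generator; counting elements, the number of generators of $\left\langle y\right\rangle$ then equals $o(y)-2$, which forces $o(y)=4$. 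Hence $o(x)$ is a prime properly dividing $4$, i.e.\ $o(x)=2$, and (by the second reduction) $\left\langle y\right\rangle\cong\mathbb{Z}_4$ is all of $G$ when $G$ is cyclic and a maximal cyclic subgroup of $G$ of order $4$ when $G$ is noncyclic --- exactly the required form, so at least one of $\left\langle x\right\rangle,\left\langle y\right\rangle$ is of that form.

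The main obstacle is this last step of the forward direction: one must verify carefully that a second non-identity non-generator of $\left\langle y\right\rangle$ genuinely produces a $4$-cycle through $e'$ on four pairwise distinct vertices, and then observe that ``$\left\langle y\right\rangle$ has exactly one non-identity non-generator'' is equivalent to $o(y)=4$. The two preliminary reductions ($o(x)$ prime and $\left\langle y\right\rangle$ maximal cyclic) are routine once one works consistently with walks avoiding $e'$.
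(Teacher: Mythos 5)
Your proof is correct, and while your reverse direction coincides with the paper's (both rest on the converse half of Lemma~\ref{pendent vertex} to conclude that the order-$4$ endpoint is pendent, so that deleting $e'$ isolates it), your forward direction takes a genuinely different route. The paper argues through cut vertices: a cut edge has an endpoint that is a cut vertex, Proposition~\ref{cut1} forces that endpoint to have order $2$, the same proposition shows the other endpoint is not a cut vertex and hence is pendent, and Lemma~\ref{pendent vertex} finishes. You instead work directly with cycles through $e'$: after normalizing $\langle x\rangle\subsetneq\langle y\rangle$ you exhibit a triangle through $e'$ unless $o(x)$ is prime and $\langle y\rangle$ is maximal cyclic, and a $4$-cycle through $e'$ unless $x$ is the unique non-identity non-generator of $\langle y\rangle$, whence $\varphi(o(y))=o(y)-2$ and $o(y)=4$. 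Your version is self-contained (it needs neither Proposition~\ref{cut1} nor the pendent-vertex lemma in the forward direction) and silently repairs a small gap in the paper's argument: the claim that a cut edge must have a cut-vertex endpoint fails when the edge is itself a $K_2$ component, a case the paper does not exclude (though it cannot in fact occur here, since the order-$2$ element below an element of order $4$ is adjacent to both generators of that $\mathbb Z_4$ and so has degree at least $2$). The price is the extra arithmetic step $\varphi(n)=n-2\Rightarrow n=4$, which you assert but could verify in one line via $n-\varphi(n)\ge n/p$ for the least prime $p\mid n$.
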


\begin{proof}
	Suppose $e'=xy$ is a cut edge in $\mathcal{RP}^*(G)$. Then either $x$ or $y$ is a cut vertex in $\mathcal{RP}^*(G)$. Without loss of generality,    we assume that $x$ is a cut vertex in $\mathcal{RP}^*(G)$. By Proposition~\ref{cut1}, $o(x)=2$ and so $o(y)\neq 2$. Again by Proposition~\ref{cut1}, $y$ is not a cut vertex in $\mathcal{RP}^*(G)$. Therefore, $y$ must be a pendent vertex in $\mathcal{RP}^*(G)$. By Lemma~\ref{pendent vertex}, $y$ satisfies the requirements of the theorem.
	
	
	Conversely, assume with out loss of generality that $\left\langle x\right\rangle $ is either $\mathbb Z_4$ or a maximal cyclic subgroup of $G$ of order $4$ when $G$ is noncyclic.
	Then by Lemma~\ref{pendent vertex}, $x$ is a pendent vertex in $\mathcal{RP}^*(G)$ and is adjacent only with $y$.  It follows that $e'=xy$ is a cut edge in $\mathcal{RP}^*(G)$. 
\end{proof}

\begin{cor}
	$\mathcal{RP}^*(\mathbb Z_n)$, $n \geq 2$ has a cut edge if and only if $n=4$.
\end{cor}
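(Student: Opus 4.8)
The plan is to reduce this corollary to Proposition~\ref{cut edge} by analyzing which cyclic groups $\mathbb{Z}_n$ have an element $x$ whose cyclic subgroup $\langle x\rangle$ is either equal to $\mathbb{Z}_n$ (i.e., $x$ is a generator) or a maximal cyclic subgroup of order $4$ (in the noncyclic case). Since $\mathbb{Z}_n$ is cyclic, the only maximal cyclic subgroup is $\mathbb{Z}_n$ itself, so the second clause of Proposition~\ref{cut edge} is relevant only when $\mathbb{Z}_n\cong\mathbb{Z}_4$; for the first clause, we need $\langle x\rangle=\mathbb{Z}_n$, which forces $n=|\langle x\rangle|=4$ as well (the subgroup in Lemma~\ref{pendent vertex} / Proposition~\ref{cut edge} must have order $4$ to produce a pendent vertex). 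Hence the ``if'' direction: when $n=4$, the generator of $\mathbb{Z}_4$ is a pendent vertex by Lemma~\ref{pendent vertex} (its cyclic subgroup is $\mathbb{Z}_4$), and the unique edge incident to it is a cut edge by Proposition~\ref{cut edge}. In fact $\mathcal{RP}^*(\mathbb{Z}_4)\cong K_{1,2}$, so the edge from a generator to the element of order $2$ is visibly a cut edge.

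For the ``only if'' direction, suppose $\mathcal{RP}^*(\mathbb{Z}_n)$ has a cut edge $e'=xy$. By Proposition~\ref{cut edge}, at least one of $\langle x\rangle$, $\langle y\rangle$ is either $\mathbb{Z}_4$ or a maximal cyclic subgroup of $\mathbb{Z}_n$ of order $4$ (the latter only if $\mathbb{Z}_n$ is noncyclic, which it is not). So some element $w\in\{x,y\}$ satisfies $\langle w\rangle\cong\mathbb{Z}_4$, whence $4\mid n$. I then need to rule out all $n$ that are proper multiples of $4$. The cleanest route is to observe that a cut edge forces one endpoint to be a pendent (degree-one) vertex of order $4$; so it suffices to show that if $4\mid n$ and $n>4$, then no element of order $4$ in $\mathbb{Z}_n$ is pendent. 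Indeed, if $n>4$ and $4\mid n$, then either $n$ has an odd prime factor $p$, in which case an element $x$ of order $4$ lies in a cyclic subgroup of order $4p$, so $\langle x^{p}\rangle$ (order $4$, wait—rather) — more carefully: the unique subgroup of order $4p$ contains $\langle x\rangle$ properly, giving $x$ a neighbor of order $4p$ in addition to its neighbor of order $2$, so $\deg(x)\geq 2$; or $8\mid n$, in which case $\langle x\rangle$ is properly contained in the subgroup of order $8$, again giving $x$ a second neighbor. Either way $\deg_{\mathcal{RP}^*(\mathbb{Z}_n)}(x)\geq 2$, so $x$ is not pendent, contradicting the conclusion of Proposition~\ref{cut edge}'s proof that the non-order-$2$ endpoint of a cut edge must be pendent.

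The main obstacle—really the only nontrivial point—is making the ``only if'' argument airtight: Proposition~\ref{cut edge} as stated gives the conclusion ``$\langle w\rangle$ is $\mathbb{Z}_4$ or a maximal cyclic subgroup of order $4$,'' but $\langle w\rangle\cong\mathbb{Z}_4$ by itself does not immediately contradict $n>4$ unless one invokes the finer structural fact extracted in its proof (namely that the order-$4$ endpoint is pendent, which is a stronger statement than merely having a cyclic subgroup isomorphic to $\mathbb{Z}_4$). I would therefore phrase the proof so that it leans on Proposition~\ref{cut edge} together with Lemma~\ref{pendent vertex}: a cut edge of $\mathcal{RP}^*(\mathbb{Z}_n)$ has a pendent endpoint $w$ with $o(w)=4$ and $\langle w\rangle$ a maximal cyclic subgroup, i.e.\ $\langle w\rangle=\mathbb{Z}_n$, forcing $n=4$. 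This keeps the argument short and avoids re-deriving the degree computations from scratch, deferring the only real content to results already established in the excerpt.

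\begin{proof}
	If $n=4$, then $\mathcal{RP}^*(\mathbb Z_4)\cong K_{1,2}$, and the edge joining a generator of $\mathbb Z_4$ (whose cyclic subgroup is $\mathbb Z_4$) to the element of order $2$ is a cut edge by Proposition~\ref{cut edge}.

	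Conversely, suppose $e'=xy$ is a cut edge in $\mathcal{RP}^*(\mathbb Z_n)$. By the proof of Proposition~\ref{cut edge}, one endpoint, say $x$, is a pendent vertex in $\mathcal{RP}^*(\mathbb Z_n)$, and by Lemma~\ref{pendent vertex} together with Proposition~\ref{cut edge}, $o(x)=4$ and $\langle x\rangle$ is either $\mathbb Z_4$ or a maximal cyclic subgroup of $\mathbb Z_n$ of order $4$ when $\mathbb Z_n$ is noncyclic. Since $\mathbb Z_n$ is cyclic, the latter case does not occur, so $\langle x\rangle\cong\mathbb Z_4$ and $4\mid n$. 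If $n>4$, then either $n$ has an odd prime divisor $p$, so the unique cyclic subgroup of order $4p$ properly contains $\langle x\rangle$, or $8\mid n$, so the unique cyclic subgroup of order $8$ properly contains $\langle x\rangle$; in either case $x$ has a neighbour of order $4p$ or $8$ in addition to its neighbour of order $2$, so $deg_{\mathcal{RP}^*(\mathbb Z_n)}(x)\geq 2$, contradicting that $x$ is pendent. Hence $n=4$.
\end{proof}
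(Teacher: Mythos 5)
Your proof is correct and takes the route the paper intends: the corollary is stated as an immediate consequence of Proposition~\ref{cut edge}, and you specialize that proposition (together with Lemma~\ref{pendent vertex}) to the cyclic case, rightly noting that the hypothesis ``$\langle x\rangle$ is $\mathbb Z_4$'' must be read as forcing the order-$4$ endpoint to be a pendent vertex, not merely to generate a subgroup isomorphic to $\mathbb Z_4$. The only simplification available: your case split (odd prime divisor versus $8\mid n$) is unnecessary, since for $4\mid n$ with $n>4$ the generator of $\mathbb Z_n$ itself is already a second neighbour of every element of order $4$, so such an element cannot be pendent.
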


\begin{pro}
	Let $G$ be a finite group and let $e'=xy$ be an edge in $\mathcal{RP}(G)$. Then $e'$ is a cut edge in $\mathcal{RP}(G)$ if and only if at least one of $\left\langle x\right\rangle$ or $\left\langle y\right\rangle$ is either $\mathbb Z_p$, where $p$ is a prime or a maximal cyclic subgroup of $G$ of prime order when $G$ is noncyclic.
\end{pro}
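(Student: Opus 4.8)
The plan is to mirror the structure of the proof of Proposition~\ref{cut edge} for the proper reduced power graph, but now working in $\mathcal{RP}(G)$, where the identity $e$ is a universal vertex. First I would establish the forward direction: suppose $e'=xy$ is a cut edge in $\mathcal{RP}(G)$. Since $e$ is adjacent to every other vertex, $e$ cannot be an endpoint of a cut edge (removing an edge incident to $e$ still leaves everything connected through $e$ or through the rest of the graph), so both $x$ and $y$ are non-identity elements. Removing $e'$ disconnects $\mathcal{RP}(G)$, and one of the two resulting components must consist of vertices that were joined to the rest only via this edge; I would argue that this forces one endpoint, say $x$, to be a pendent vertex of $\mathcal{RP}(G)$, adjacent only to $y$. (If $x$ had another neighbour $w$, then $x$ would still be connected to $e$-side of the graph through $w$ unless $w$ also sits in the isolated component — a short argument rules this out, using that $e$ is universal and that the component containing $y$ also contains $e$.) The key is then to characterize the pendent vertices of $\mathcal{RP}(G)$: $\deg_{\mathcal{RP}(G)}(x)=1$ means $x$ is adjacent to exactly one vertex, which must be either $e$ alone (when $\langle x\rangle$ is a minimal nontrivial subgroup, i.e. $o(x)$ is prime, and $x$ is the unique generator, forcing $o(x)=2$... no — rather $\langle x\rangle$ has no proper nontrivial subgroup, so $o(x)=p$ prime, but then $x$ is adjacent to $e$ and to every $w$ with $\langle x\rangle\subsetneq\langle w\rangle$, so we additionally need $\langle x\rangle$ to be maximal cyclic) or one non-identity vertex. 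Carefully: $x$ is adjacent to $e$ always, so $\deg(x)=1$ forces $x$ to have no other neighbour, meaning $\langle x\rangle$ is a maximal cyclic subgroup (nothing strictly contains it) \emph{and} $\langle x\rangle$ has no proper nontrivial subgroup other than possibly ones generated by other elements — so $o(x)=p$ is prime and $\langle x\rangle=\mathbb Z_p$ is maximal cyclic. When $G$ is cyclic this just says $G\cong\mathbb Z_p$.

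For the converse, assume without loss of generality that $\langle x\rangle$ is either $\mathbb Z_p$ for a prime $p$, or a maximal cyclic subgroup of $G$ of prime order (when $G$ is noncyclic). Then $\langle x\rangle$ has no proper nontrivial subgroup, so the only $w$ with $\langle w\rangle\subsetneq\langle x\rangle$ is $w=e$; and maximality means no $\langle w\rangle$ strictly contains $\langle x\rangle$. Hence the only neighbours of $x$ in $\mathcal{RP}(G)$ are $e$ and the other generators of $\langle x\rangle$ — but wait, those other generators $x'$ satisfy $\langle x'\rangle=\langle x\rangle$, so they are \emph{not} adjacent to $x$ in the reduced power graph (equality is excluded). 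Therefore $x$ is adjacent only to $e$, i.e. $x$ is pendent, and $e'=xe$... but the statement says $e'=xy$ with $y$ the other endpoint, so $y=e$. Removing the unique edge at a pendent vertex always disconnects the graph (it isolates $x$), so $e'$ is a cut edge. I need to double-check the edge case $p=2$: then $\langle x\rangle=\{e,x\}$ and $x$ is pendent with neighbour $e$, fine. Also $|G|=p$ itself: then $\mathcal{RP}(\mathbb Z_p)\cong K_{1,p-1}$, a star centered at $e$, and every edge is a cut edge, consistent with every non-identity element generating a maximal (= the whole, minus nothing) cyclic subgroup of prime order.

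The main obstacle I anticipate is the forward direction's structural step: showing that a cut edge of $\mathcal{RP}(G)$ must have a pendent endpoint. Because $e$ is a universal vertex, the graph is quite highly connected away from pendant structure, so intuitively the only way a single edge can be a bridge is if one side of it is a leaf; but I must argue this cleanly. The clean argument: if $e'=xy$ is a cut edge, $\mathcal{RP}(G)-e'$ has exactly two components $A\ni x$ and $B\ni y$. The universal vertex $e$ lies in exactly one of them, say $B$. Every vertex $a\in A$ with $a\neq x$ is adjacent to $e\in B$ in $\mathcal{RP}(G)$, and that edge is not $e'$ (since $e'=xy$ and $a\neq x$, $e\neq y$ unless $y=e$ — handle $y=e$ separately, in which case $B=\{e\}$-side issue, but then $x$ pendent is immediate). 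So $a$ has a neighbour in $B$ using an edge $\neq e'$, contradicting that $A$ and $B$ are separated by deleting only $e'$. Hence $A=\{x\}$, i.e. $x$ is pendent. The rest is the routine subgroup-lattice analysis identifying pendent vertices of $\mathcal{RP}(G)$ as generators of maximal cyclic subgroups of prime order, which parallels Lemma~\ref{pendent vertex}.
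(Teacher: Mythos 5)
Your final ``clean argument'' paragraph is correct and is, in substance, the paper's own proof: the paper simply notes that the argument parallels Proposition~\ref{cut edge}, using that the only possible cut vertex of $\mathcal{RP}(G)$ is the identity and that a vertex adjacent only to the identity must generate a maximal cyclic subgroup of prime order. Your identification of the pendent vertices of $\mathcal{RP}(G)$ (adjacent only to $e$, hence $\left\langle x\right\rangle$ has no nontrivial proper subgroup and is not properly contained in any cyclic subgroup, so it is a maximal cyclic subgroup of prime order, which in the cyclic case forces $G\cong\mathbb Z_p$) and your converse are both right.

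However, the opening step of your forward direction is false and contradicts what you correctly prove later. You assert that ``$e$ cannot be an endpoint of a cut edge\dots so both $x$ and $y$ are non-identity elements.'' The truth is exactly the opposite: because $e$ is a universal vertex, deleting any edge \emph{not} incident to $e$ leaves every remaining pair of vertices joined through $e$, so every cut edge of $\mathcal{RP}(G)$ is necessarily incident to $e$. Concretely, $\mathcal{RP}(\mathbb Z_p)\cong K_{1,p-1}$ is a star centred at $e$ and each of its edges is a cut edge with $e$ as an endpoint. Under your (false) premise that $x,y\neq e$, the pendent endpoint $x$ would have the two distinct neighbours $y$ and $e$, so no cut edge could exist at all and the proposition would be vacuous. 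You do recover from this: in the converse you observe that $y=e$, and the last paragraph's component argument correctly forces $A=\{x\}$ and, via the surviving edge from $x$ to the universal vertex, $y=e$. But the first paragraph must be deleted rather than supplemented; as written it would derail a formal write-up. Keep only the final version of the argument.
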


\begin{proof}
	The proof is similar to the proof of Proposition~\ref{cut edge} and using the facts that if $\mathcal{RP}(G)$ has a cut vertex, then it must be an identity element in $G$ and any vertex which is adjacent to the identity element generates a maximal cyclic subgroup of prime order.  
\end{proof}

\begin{cor}
	$\mathcal{RP}(\mathbb Z_n)$, $n \geq 1$ has a cut edge if and only if $n=p$, where $p$ is a prime.
\end{cor}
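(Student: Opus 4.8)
The plan is to reduce the statement to a question about \emph{isolated} vertices of $\mathcal{RP}^*(\mathbb Z_n)$, exploiting the identity $\mathcal{RP}(\mathbb Z_n)=K_1+\mathcal{RP}^*(\mathbb Z_n)$ (with the apex being the identity $e$), and then to read off the answer from the subgroup lattice of a cyclic group. I will also indicate the shorter route through the Proposition immediately preceding this corollary.

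First I would record a general observation about joins with $K_1$: for any graph $H$, an edge of $K_1+H$ incident with the apex $e$, say $ev$, is a cut edge precisely when $v$ is an isolated vertex of $H$ — removing $ev$ either isolates $v$ (if $v$ has no neighbour in $H$) or leaves the graph connected (since $v$ still reaches $e$ through any $H$-neighbour) — while no edge lying inside $H$ can be a cut edge of $K_1+H$, because both its endpoints remain joined to $e$. Applying this with $H=\mathcal{RP}^*(\mathbb Z_n)$ gives: $\mathcal{RP}(\mathbb Z_n)$ has a cut edge if and only if $\mathcal{RP}^*(\mathbb Z_n)$ has an isolated vertex.

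Next I would determine when $\mathcal{RP}^*(\mathbb Z_n)$ has an isolated vertex. Since $\mathbb Z_n$ has exactly one subgroup of each order dividing $n$, a non-identity element $v$ with $o(v)=d$ is isolated in $\mathcal{RP}^*(\mathbb Z_n)$ if and only if $\langle v\rangle$ contains no proper non-trivial subgroup (equivalently $d$ is prime) and $\langle v\rangle$ is not properly contained in a larger cyclic subgroup (equivalently $d=n$, since otherwise the full group $\mathbb Z_n$ contains it properly). Hence $\mathcal{RP}^*(\mathbb Z_n)$ has an isolated vertex if and only if $n$ is prime, in which case every non-identity element is isolated. Combining the two steps, $\mathcal{RP}(\mathbb Z_n)$ has a cut edge if and only if $n$ is prime; and when $n=p$ we have $\mathcal{RP}(\mathbb Z_p)\cong K_{1,p-1}$, each of whose edges is clearly a cut edge.

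Alternatively, the corollary drops out of the preceding Proposition: for $G=\mathbb Z_n$, which is cyclic, the clause concerning a maximal cyclic subgroup of a noncyclic group is vacuous, so a cut edge $xy$ forces $\langle x\rangle$ or $\langle y\rangle$ to be a maximal cyclic subgroup of $\mathbb Z_n$ of prime order; in a cyclic group the only maximal cyclic subgroup is the whole group, so $|\mathbb Z_n|$ is prime. The point I would be most careful about in either route is that the conclusion must be $n=p$ and not merely $p\mid n$: this requires genuinely using that the relevant endpoint generates a \emph{maximal} cyclic subgroup (equivalently, is isolated in $\mathcal{RP}^*(\mathbb Z_n)$), which for $\mathbb Z_n$ pins down $n$ exactly. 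The converse inclusion is routine in both approaches.
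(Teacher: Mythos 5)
Your proposal is correct. The paper gives no explicit proof of this corollary; it is meant to fall out of the immediately preceding Proposition, which is exactly your second route. Your primary route is genuinely different and self-contained: you bypass the cut-edge characterization entirely by using the join decomposition $\mathcal{RP}(\mathbb Z_n)=K_1+\mathcal{RP}^*(\mathbb Z_n)$ to reduce "has a cut edge" to "$\mathcal{RP}^*(\mathbb Z_n)$ has an isolated vertex," and then read off from the subgroup lattice of $\mathbb Z_n$ that an isolated vertex exists precisely when $n$ is prime. This buys you independence from the Proposition (whose proof in turn leans on the cut-vertex results and the pendant-vertex lemma), and it sidesteps an ambiguity you rightly flag: taken literally, the clause "$\langle x\rangle$ is $\mathbb Z_p$" in the Proposition would only yield $p\mid n$, whereas the intended reading (parallel to Lemma on pendant vertices, where "$\langle x\rangle$ is $\mathbb Z_4$" means $G$ itself is $\mathbb Z_4$) forces $\langle x\rangle$ to be the unique maximal cyclic subgroup of the cyclic group, i.e.\ $\langle x\rangle=\mathbb Z_n$ and hence $n=p$. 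Your explicit attention to that point, together with the verification that every edge of $K_{1,p-1}$ is a cut edge for the converse, makes the argument complete, including the degenerate cases $n=1$ and $n=2$.
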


\section{Independence number, Connectivity, Hamiltonicity}
Let $\Gamma$ be a graph. The independence number $\alpha(\Gamma)$ of $\Gamma$ is the number of vertices in a maximum independent set of $\Gamma$. The connectivity $\kappa(\Gamma)$ of $\Gamma$ is the minimum number of vertices, whose removal results in a disconnected or trivial graph. A graph is Hamiltonian if it has a spanning cycle.
It is known (\cite[Theorem 6.3.4]{balakrishnan GT book}) that if 
$\Gamma$ is Hamiltonian, then for every nonempty proper subset $S$ of
$V(\Gamma)$, the number of components of $G-S$ is at most $|S|$. Also, if $\Gamma$ is $2$-connected and $\alpha(\Gamma)\leq \kappa(\Gamma)$, then $\Gamma$ is Hamiltonian  (\cite[Theorem 6.3.13]{balakrishnan GT book}). We use these facts for investigation of the Hamiltonicity of the reduced power graph of groups.

In this section, we explore the independence number, connectivity and Hamiltonicity of  reduced power graph (resp. proper reduced power graph) of cyclic groups, dihedral groups, quaternion groups, semi dihedral groups and finite $p$-groups. Note that for any group $G$,  $\mathcal{RP}(G)=K_1+\mathcal{RP}^*(G)$, so it follows that $\alpha(\mathcal{RP}(G))=\alpha(\mathcal{RP}^*(G))$.

\subsection{Cyclic groups}


\begin{pro}
	Let $n$ be a positive integer and let $\varphi(n)$ denotes its Euler's totient function. Then
	\begin{enumerate}[\normalfont(1)]
		
		\item $\kappa(\mathcal{RP}(\mathbb Z_n))= n-\varphi(n)$, if $2\varphi(n)+1\geq n$;   
		
		\item $\kappa(\mathcal{RP}(\mathbb Z_n))\geq \varphi(n)+1$, if $2\varphi(n)+1 < n$. The equality holds for $n=2p$, where $p$ is a prime. 		
	\end{enumerate}
	
\end{pro}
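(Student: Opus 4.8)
The plan is to exploit the block structure of $\mathcal{RP}(\mathbb{Z}_n)$. Partition $\mathbb{Z}_n$ by element order: for each divisor $d$ of $n$ the class $V_d$ of elements of order $d$ has size $\varphi(d)$, is an independent set, and $V_{d_1}$ is completely joined to $V_{d_2}$ exactly when $d_1\mid d_2$ or $d_2\mid d_1$. Two consequences drive the whole argument: the identity $e$ (the unique vertex of $V_1$) is adjacent to every other vertex, and every generator of $\mathbb{Z}_n$ (vertex of $V_n$) is adjacent to every non-generator. In particular the set $T$ of all $n-\varphi(n)$ non-generators is a vertex cut, since $\mathcal{RP}(\mathbb{Z}_n)-T$ is the edgeless graph on the $\varphi(n)$ generators; hence $\kappa(\mathcal{RP}(\mathbb{Z}_n))\le n-\varphi(n)$. (For $n\le 2$ the graph is $K_1$ or $K_2$ and the statement is immediate, so assume $n\ge 3$.)

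For the lower bounds I would fix a minimum vertex cut $X$ and split into cases. If $e\notin X$, then $e$ is adjacent to every surviving vertex, so $\mathcal{RP}(\mathbb{Z}_n)-X$ is connected unless it is trivial; this forces $|X|\ge n-1$, which is at least the target bound in either part. If $e\in X$, let $G^*$ and $T^*$ denote the surviving generators and surviving non-generators respectively. Since every generator is joined to every non-generator, $\mathcal{RP}(\mathbb{Z}_n)-X$ is connected whenever both $G^*$ and $T^*$ are nonempty; as $X$ is a cut, this means $V_n\subseteq X$ or $T\subseteq X$. In the first subcase $\{e\}\cup V_n\subseteq X$, so $|X|\ge\varphi(n)+1$; in the second $|X|\ge|T|=n-\varphi(n)$.

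This already gives everything but the final equality. If $2\varphi(n)+1\ge n$, then $\varphi(n)+1\ge n-\varphi(n)$, so in both subcases $|X|\ge n-\varphi(n)$, giving $\kappa(\mathcal{RP}(\mathbb{Z}_n))= n-\varphi(n)$, which is part (1). If $2\varphi(n)+1<n$, then $n-\varphi(n)>\varphi(n)+1$, so in both subcases $|X|\ge\varphi(n)+1$, which is the inequality in part (2). For the sharpness claim take $n=2p$ with $p$ an odd prime; then $\varphi(n)=p-1$ and $2\varphi(n)+1=2p-1<n$, so we are in case (2) and the target is $\varphi(n)+1=p$. Let $c$ be the unique element of order $2$. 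Its neighbours are precisely $e$ together with the $p-1$ generators, because an element of order $p$ is not adjacent to $c$ (as $2\nmid p$ and $p\nmid 2$); thus $|N(c)|=p$. Deleting $N(c)$ leaves $\{c\}\cup V_p$ with $c$ isolated and $|V_p|=p-1\ge 2$, hence a disconnected graph, so $\kappa(\mathcal{RP}(\mathbb{Z}_{2p}))\le p$ and equality holds.

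The divisibility bookkeeping is routine; the only points needing care are verifying that the case analysis for the lower bound is exhaustive — in particular handling the ``trivial graph'' branch when $e\notin X$, and the degenerate complete cases $n\le 2$ — and confirming that the exhibited cut $N(c)$ for $n=2p$ genuinely disconnects the graph rather than merely reducing it to one vertex, which is exactly why $p-1\ge 2$ (equivalently $p\ge 3$) is needed. I do not anticipate any deeper obstacle.
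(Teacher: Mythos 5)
Your proof is correct and follows essentially the same route as the paper: the set of non-generators as an explicit cut for the upper bound, the observation that a cut must contain either all of $\{e\}\cup V_n$ or all non-generators for the lower bound, and the structure of $\mathcal{RP}(\mathbb Z_{2p})$ for the sharpness claim. Your version is in fact slightly more careful than the paper's (which loosely asserts that the generators are "adjacent to all the vertices"), but the underlying argument is identical.
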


\begin{proof}
	Let $S=\{g\in \mathbb Z_n\mid o(g)\neq n\}$. Then $|S|=n-\varphi(n)$ and $\mathcal{RP}(\mathbb Z_{n})- S=\overline{K}_{\varphi(n)}$. Therefore, $\kappa(\mathcal{RP}(\mathbb Z_{n}))\leq n-\varphi(n)$. Since  the identity element  and the generators of $\mathbb Z_n$ are adjacent to all the vertices in $\mathcal{RP}(\mathbb Z_n)$. It follows that $\kappa(\mathcal{RP}(\mathbb Z_{n}))\geq \varphi(n)+1$, if $n-\varphi(n)>\varphi(n)+1$; $\kappa(\mathcal{RP}(\mathbb Z_{n}))=n-\varphi(n)$, if $n-\varphi(n)\leq \varphi(n)+1$.
	
	If $n=2p$, where $p$ is a prime, then $\mathcal{RP}(\mathbb Z_{2p})\cong K_1+K_{p,p-1}$. Consequently, $\kappa(\mathcal{RP}(\mathbb Z_{2p}))=p=\varphi(2p)+1$.  
\end{proof}

\begin{cor}
	Let $n \geq 2$ be an integer. Then we have the following:
	\begin{enumerate}[\normalfont(1)]
		
		\item $\kappa(\mathcal{RP}^*(\mathbb Z_n))= n-\varphi(n)-1$ if $2\varphi(n)+1\geq n$;
		
		\item $\kappa(\mathcal{RP}^*(\mathbb Z_n))\geq \varphi(n)$ if $2\varphi(n)+1< n$. The equality holds for $n=2p$, where $p$ is a prime.
	\end{enumerate}
\end{cor}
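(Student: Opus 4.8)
The plan is to deduce the corollary directly from the preceding proposition by using the relation $\mathcal{RP}(\mathbb{Z}_n)=K_1+\mathcal{RP}^*(\mathbb{Z}_n)$ recorded earlier, which says precisely that the identity $e$ is a universal vertex of $\mathcal{RP}(\mathbb{Z}_n)$ and that $\mathcal{RP}^*(\mathbb{Z}_n)=\mathcal{RP}(\mathbb{Z}_n)-\{e\}$. The one general fact I would isolate first is that for any finite graph $H$ one has $\kappa(K_1+H)=\kappa(H)+1$. Indeed, if $S$ is a vertex cut of $K_1+H$ that avoids the apex vertex $v$, then every surviving vertex is still adjacent to $v$, so $K_1+H-S$ is disconnected or trivial only when $S\supseteq V(H)$; since $\kappa(H)+1\le |V(H)|$, the cheapest cut is obtained by adjoining $v$ to a minimum vertex cut of $H$, giving the claimed equality (with the usual conventions this also covers $|V(H)|\le 1$). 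Equivalently, and this is the form I would actually write down for $\mathbb{Z}_n$: a minimum vertex cut of $\mathcal{RP}(\mathbb{Z}_n)$ must contain $e$ and its remainder is a vertex cut of $\mathcal{RP}^*(\mathbb{Z}_n)$, whence $\kappa(\mathcal{RP}^*(\mathbb{Z}_n))=\kappa(\mathcal{RP}(\mathbb{Z}_n))-1$.

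Granting this, both parts are immediate from the proposition. When $2\varphi(n)+1\ge n$ the proposition gives $\kappa(\mathcal{RP}(\mathbb{Z}_n))=n-\varphi(n)$, so $\kappa(\mathcal{RP}^*(\mathbb{Z}_n))=n-\varphi(n)-1$, which is part (1). When $2\varphi(n)+1<n$ the proposition gives $\kappa(\mathcal{RP}(\mathbb{Z}_n))\ge \varphi(n)+1$, so $\kappa(\mathcal{RP}^*(\mathbb{Z}_n))\ge\varphi(n)$, which is part (2).

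For the sharpness assertion when $n=2p$, I would recall from the proof of the proposition that $\mathcal{RP}(\mathbb{Z}_{2p})\cong K_1+K_{p,p-1}$, hence $\mathcal{RP}^*(\mathbb{Z}_{2p})\cong K_{p,p-1}$; since $\kappa(K_{a,b})=\min\{a,b\}$, this yields $\kappa(\mathcal{RP}^*(\mathbb{Z}_{2p}))=p-1=\varphi(2p)$, and one checks $2\varphi(2p)+1=2p-1<2p$ so that $n=2p$ does fall under case (2). There is no real obstacle here; the only thing needing a line of care is the small case $n=2$, where $\mathcal{RP}^*(\mathbb{Z}_2)=K_1$ has connectivity $0=n-\varphi(n)-1$ under the convention that the trivial graph has connectivity $0$, so the join identity must be phrased to include that case.
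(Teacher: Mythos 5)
Your proof is correct and is essentially the derivation the paper intends: the corollary is stated without proof precisely because it follows from the preceding proposition via the join relation $\mathcal{RP}(\mathbb Z_n)=K_1+\mathcal{RP}^*(\mathbb Z_n)$ and the standard fact $\kappa(K_1+H)=\kappa(H)+1$, which you state and justify correctly (including the degenerate cases). The only minor caveat, inherited from the paper's own statement, is that your verification $2\varphi(2p)+1=2p-1<2p$ uses $\varphi(2p)=p-1$ and hence tacitly assumes $p$ odd; for $p=2$ the case $n=4$ falls under part (1) instead.
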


\begin{cor}\label{kappa}
	$\kappa(\mathcal{RP}(\mathbb Z_{p^m}))=p^{m-1}$ and $\kappa(\mathcal{RP}^*(\mathbb Z_{p^m}))=p^{m-1}-1$, where $p$ is a prime and $m$ is a positive integer.
\end{cor}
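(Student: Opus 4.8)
The plan is simply to specialize the preceding Proposition and its Corollary to the case $n=p^m$; no new idea is needed. First I would record the values of the relevant quantities: since $\varphi(p^m)=p^{m-1}(p-1)$, we have
\[
n-\varphi(n)=p^m-p^{m-1}(p-1)=p^{m-1},\qquad n-\varphi(n)-1=p^{m-1}-1 .
\]
So the claimed formulas are exactly the expressions $n-\varphi(n)$ and $n-\varphi(n)-1$ appearing in part (1) of those two results, and all that remains is to confirm that $n=p^m$ falls into case (1) rather than case (2).

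Next I would check the hypothesis $2\varphi(n)+1\ge n$. Substituting $n=p^m$, this reads $2p^{m-1}(p-1)+1\ge p^m$, which rearranges to $p^{m-1}(p-2)+1\ge 0$. Since $p$ is a prime we have $p\ge 2$, so $p-2\ge 0$ and the left-hand side is at least $1$; the inequality is in fact strict for every prime $p$ and every $m\ge 1$ (including $p=2$, where the left side equals $1$). Hence we are always in case (1) of both the preceding Proposition and the preceding Corollary, which immediately give $\kappa(\mathcal{RP}(\mathbb Z_{p^m}))=n-\varphi(n)=p^{m-1}$ and $\kappa(\mathcal{RP}^*(\mathbb Z_{p^m}))=n-\varphi(n)-1=p^{m-1}-1$.

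There is essentially no obstacle here: the entire content of the argument is the one-line verification that $p^{m-1}(p-2)+1\ge 0$. The only points worth a sanity check are the degenerate small cases — $m=1$, where $\mathcal{RP}(\mathbb Z_p)\cong K_{1,p-1}$ and $\mathcal{RP}^*(\mathbb Z_p)\cong \overline{K}_{p-1}$, consistent with $\kappa=1$ and $\kappa=0$, and $p=2$, which as noted above still satisfies the case-(1) hypothesis — and these present no difficulty.
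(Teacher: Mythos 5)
Your proof is correct and is exactly the intended derivation: the paper states this as an immediate corollary of the preceding Proposition and Corollary, and your verification that $n=p^m$ satisfies $2\varphi(n)+1\ge n$ (via $p^{m-1}(p-2)+1\ge 1$) together with $n-\varphi(n)=p^{m-1}$ is all that is needed.
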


\begin{pro}\label{alpha}
	Let  $n \geq 2$ be an integer. Then we have the following:
	\begin{enumerate}[\normalfont(1)]
		\item $\alpha(\mathcal{RP}^*(\mathbb Z_n))=p^{m-1}(p-1)$ if $n=p^m$, where $p$ is a prime and $m$ is a positive integer.
		
		\item $\alpha(\mathcal{RP}^*(\mathbb Z_n))\geq \varphi(n)$ if $n$ is not a prime power. The equality holds for $n=pq$, where $p$, $q$ are distinct primes with $p,q\neq 2$.
	\end{enumerate}
\end{pro}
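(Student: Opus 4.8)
The plan is to read off both parts from the explicit complete-multipartite descriptions of $\mathcal{RP}^*(\mathbb Z_n)$ already recorded inside the proof of Theorem~\ref{complete partite main thm}, together with the elementary fact that the independence number of a complete multipartite graph is the cardinality of its largest part. For part~(1), when $n=p^m$ I would invoke the isomorphism~\eqref{str of pm}, namely $\mathcal{RP}^*(\mathbb Z_{p^m})\cong K_{p-1,\,p(p-1),\,\ldots,\,p^{m-1}(p-1)}$, whose parts are exactly the classes $X_i$ of elements of order $p^i$ with $|X_i|=\varphi(p^i)=p^{i-1}(p-1)$ for $i=1,\dots,m$. Since $p^{i-1}(p-1)$ is strictly increasing in $i$, the largest part is $X_m$, and hence $\alpha(\mathcal{RP}^*(\mathbb Z_{p^m}))=p^{m-1}(p-1)$.

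For part~(2), I would first note that the $\varphi(n)$ generators of $\mathbb Z_n$ all generate the whole group, so no two of them are adjacent in $\mathcal{RP}^*(\mathbb Z_n)$, because adjacency requires a strict containment between the cyclic subgroups they generate. Thus the generating set is independent and $\alpha(\mathcal{RP}^*(\mathbb Z_n))\geq\varphi(n)$ for every $n\geq 2$, in particular when $n$ is not a prime power. For the equality statement take $n=pq$ with $p,q$ distinct primes; Case~2 of the proof of Theorem~\ref{complete partite main thm} gives $\mathcal{RP}^*(\mathbb Z_{pq})=K_{p+q-2,\,(p-1)(q-1)}$, the first part consisting of the elements of prime order and the second of the $\varphi(pq)=(p-1)(q-1)$ generators. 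Hence $\alpha(\mathcal{RP}^*(\mathbb Z_{pq}))=\max\{p+q-2,\,(p-1)(q-1)\}$, and since $(p-1)(q-1)-(p+q-2)=(p-2)(q-2)-1\geq 0$ exactly when $p,q\geq 3$, i.e. $p,q\neq 2$, the generator part is the larger one in that case and $\alpha(\mathcal{RP}^*(\mathbb Z_{pq}))=(p-1)(q-1)=\varphi(pq)$.

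The only genuinely delicate point is the short arithmetic comparison $(p-1)(q-1)\geq p+q-2$, equivalently $(p-2)(q-2)\geq 1$: this is what decides which colour class of $K_{p+q-2,\,(p-1)(q-1)}$ realizes the independence number, and it is precisely here that the hypothesis $p\neq 2\neq q$ enters, since for $n=2q$ the part of prime-order elements has size $q>q-1=\varphi(2q)$ and the claimed equality would fail. Everything else is routine bookkeeping: confirming the sizes of the partition classes through Euler's totient function and using that a maximum independent set of a complete multipartite graph is a largest part.
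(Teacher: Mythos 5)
Your proof is correct and follows essentially the same route as the paper's: part (1) is read off from the complete multipartite description \eqref{str of pm}, part (2)'s lower bound comes from the generators forming an independent set, and the equality for $n=pq$ comes from the complete bipartite structure together with the comparison $(p-1)(q-1)\geq p+q-2$ for odd distinct primes. Your version is in fact slightly more careful, spelling out the arithmetic $(p-1)(q-1)-(p+q-2)=(p-2)(q-2)-1$ and noting why $p=2$ must be excluded, whereas the paper states the corresponding inequality without detail.
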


\begin{proof}
	Clearly by~\eqref{str of pm},
	$\alpha(\mathcal{RP}^*(\mathbb Z_{p^m}))=p^{m-1}(p-1)$. Assume that $n$ is not a prime power. Since the generators of $\mathbb Z_n$ are not adjacent to each other in $\mathcal{RP}^*(\mathbb Z_{n})$, it follows that $\alpha(\mathcal{RP}^*(\mathbb Z_{n}))\geq \varphi(n)$. Finally, if $p$, $q$ are distinct primes with $p,q\neq 2$, then $\mathcal{RP}(\mathbb Z_{pq})=K_1+{K_{\varphi(pq),p+q-1}}$. Since $p,q\neq 2$, $\varphi(pq)=(p-1)(q-1)> p+q-1$, we have $\alpha(\mathcal{RP}^*(\mathbb Z_{pq}))=\varphi(pq)$.  
\end{proof}

\begin{cor}
	Let $n$ be a positive integer. Then we have the following.
	\begin{enumerate}[\normalfont(1)]
		\item $\alpha(\mathcal{RP}(\mathbb Z_n))=p^{m-1}(p-1)$ if $n=p^m$, where $p$ is a prime and $m$ is a positive integer.
		
		\item $\alpha(\mathcal{RP}(\mathbb Z_n))\geq \varphi(n)$ if $n$ is not a prime power. The equality holds for $n=pq$, where $p$, $q$ are distinct primes with $p,q\neq 2$.
	\end{enumerate}
\end{cor}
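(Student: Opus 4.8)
The plan is to reduce everything to Proposition~\ref{alpha}, which is the analogue of this statement for the proper reduced power graph. The key observation, already recorded at the start of Section~5, is that $\mathcal{RP}(\mathbb Z_n)=K_1+\mathcal{RP}^*(\mathbb Z_n)$, where the $K_1$ is the identity element $e$, adjacent to every other vertex. Consequently any independent set of $\mathcal{RP}(\mathbb Z_n)$ of size at least $2$ must avoid $e$ and is therefore an independent set of $\mathcal{RP}^*(\mathbb Z_n)$; conversely every independent set of $\mathcal{RP}^*(\mathbb Z_n)$ is one of $\mathcal{RP}(\mathbb Z_n)$. Since in both clauses of the statement $n\geq 2$, the graph $\mathcal{RP}^*(\mathbb Z_n)$ is non-empty, so $\alpha(\mathcal{RP}^*(\mathbb Z_n))\geq 1$, and hence $\alpha(\mathcal{RP}(\mathbb Z_n))=\alpha(\mathcal{RP}^*(\mathbb Z_n))$.

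With this identity in hand, part (1) is immediate: if $n=p^m$, then $\alpha(\mathcal{RP}(\mathbb Z_n))=\alpha(\mathcal{RP}^*(\mathbb Z_{p^m}))=p^{m-1}(p-1)$ by Proposition~\ref{alpha}(1). Likewise, if $n$ is not a prime power, then $\alpha(\mathcal{RP}(\mathbb Z_n))=\alpha(\mathcal{RP}^*(\mathbb Z_n))\geq \varphi(n)$ by Proposition~\ref{alpha}(2); and the sharpness for $n=pq$ with $p,q$ distinct odd primes transfers verbatim, since there $\alpha(\mathcal{RP}^*(\mathbb Z_{pq}))=\varphi(pq)$ by the same proposition.

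There is essentially no obstacle here: the only point requiring a line of care is the equality $\alpha(\mathcal{RP}(\mathbb Z_n))=\alpha(\mathcal{RP}^*(\mathbb Z_n))$, i.e. verifying that a largest independent set is never forced to be the singleton $\{e\}$, which holds precisely because $n\geq 2$ ensures $\mathcal{RP}^*(\mathbb Z_n)$ has at least one vertex. Everything else is a direct citation of Proposition~\ref{alpha}.
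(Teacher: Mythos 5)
Your proposal is correct and matches the paper's intended argument: the corollary is meant to follow from Proposition~\ref{alpha} via the observation, recorded at the start of Section~5, that $\mathcal{RP}(G)=K_1+\mathcal{RP}^*(G)$ implies $\alpha(\mathcal{RP}(G))=\alpha(\mathcal{RP}^*(G))$. Your extra remark verifying that the maximum independent set is never the singleton $\{e\}$ is a harmless (and slightly more careful) elaboration of the same step.
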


\begin{pro}\label{cyclic  pgroup hamiltonian} Let $p$ be a prime number and $n\geq 1$. Then we have the following:
	\begin{enumerate}[\normalfont(1)]
		\item
		$\mathcal{RP}(\mathbb Z_{p^n})$  is Hamiltonian if and only if $p=2$ and $n\geq 2$;
		\item  $\mathcal{RP}^*(\mathbb Z_{p^n})$ is non-Hamiltonian.
	\end{enumerate}
\end{pro}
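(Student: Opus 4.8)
The key structural fact is the description of $\mathcal{RP}^*(\mathbb Z_{p^n})$ from equation~\eqref{str of pm}: it is the complete multipartite graph $K_{p-1,\,p(p-1),\,\ldots,\,p^{n-1}(p-1)}$, and hence $\mathcal{RP}(\mathbb Z_{p^n}) = K_1 + \mathcal{RP}^*(\mathbb Z_{p^n})$ is the complete multipartite graph with one extra singleton part, i.e. $K_{1,\,p-1,\,p(p-1),\,\ldots,\,p^{n-1}(p-1)}$. For a complete multipartite graph $K_{n_1,\ldots,n_r}$ with $N=\sum n_i$ vertices, Hamiltonicity is classical: it is Hamiltonian if and only if $r\ge 2$ and $\max_i n_i \le N - \max_i n_i$ (equivalently $\max_i n_i \le \lfloor N/2\rfloor$). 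I would invoke this directly, or derive the "only if" direction from the cited fact that a Hamiltonian graph cannot have more components after deleting a set $S$ than $|S|$: deleting all the smaller parts leaves an independent set (empty graph) on $\max_i n_i$ vertices, so we need $\max_i n_i \le N - \max_i n_i$.

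For part (1): the largest part of $\mathcal{RP}(\mathbb Z_{p^n})$ has size $p^{n-1}(p-1)$ (for $n\ge 2$; for $n=1$ the parts are $\{1\}$ and the $p-1$ generators) and the total number of vertices is $p^n$. The Hamiltonicity condition becomes $p^{n-1}(p-1) \le p^n - p^{n-1}(p-1) = p^{n-1}$, i.e. $p-1 \le 1$, i.e. $p=2$. When $p=2$ and $n\ge 2$ the graph is $K_{1,1,2,4,\ldots,2^{n-1}}$ with $2^n$ vertices, largest part $2^{n-1}=N/2$, so it is Hamiltonian. When $n=1$ we must check separately: $\mathcal{RP}(\mathbb Z_2)\cong K_2$ is not Hamiltonian (it is not even a cycle), and $\mathcal{RP}(\mathbb Z_p)\cong K_{1,p-1}$ is a star, not Hamiltonian for $p\ge 3$ and $K_2$ for $p=2$ — so $n\ge 2$ is genuinely needed. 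This disposes of (1).

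For part (2): $\mathcal{RP}^*(\mathbb Z_{p^n}) = K_{p-1,\,p(p-1),\,\ldots,\,p^{n-1}(p-1)}$ has $p^n - 1$ vertices and largest part $p^{n-1}(p-1)$. Since $2p^{n-1}(p-1) = p^n(p-1) \cdot \frac{2}{p} $ — more cleanly, $p^{n-1}(p-1) > (p^n-1) - p^{n-1}(p-1)$ is equivalent to $2p^{n-1}(p-1) > p^n - 1$, i.e. $2p^n - 2p^{n-1} > p^n - 1$, i.e. $p^n - 2p^{n-1} + 1 > 0$, i.e. $p^{n-1}(p-2) + 1 > 0$, which holds for every prime $p$ and every $n\ge 1$. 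Hence the largest part exceeds half the vertices, so $\mathcal{RP}^*(\mathbb Z_{p^n})$ is not Hamiltonian; the degenerate small cases ($\mathbb Z_p$ gives an edgeless or one-edge graph, $\mathbb Z_4$ gives $K_{1,2}$) are subsumed. I do not anticipate a serious obstacle here; the only care needed is the boundary bookkeeping for small $n$ and the clean invocation of the multipartite Hamiltonicity criterion (or its necessity half via the component-counting inequality already quoted in the section preamble).
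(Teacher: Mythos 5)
Your proposal is correct and follows essentially the same route as the paper: both rest on the complete multipartite description $\mathcal{RP}^*(\mathbb Z_{p^n})\cong K_{p-1,p(p-1),\ldots,p^{n-1}(p-1)}$ from \eqref{str of pm}, and both obtain the non-Hamiltonicity statements by deleting everything outside the largest part $X_n$ and counting the resulting components. The only cosmetic difference is in the sufficiency half for $p=2$, $n\ge 2$: the paper invokes the Chv\'atal--Erd\H{o}s condition via $\alpha(\mathcal{RP}(\mathbb Z_{2^n}))=2^{n-1}=\kappa(\mathcal{RP}(\mathbb Z_{2^n}))$ together with $2$-connectedness, whereas you invoke the standard Hamiltonicity criterion for complete multipartite graphs, which for these graphs reduces to the very same inequality $\max_i n_i\le N-\max_i n_i$.
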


\begin{proof}
	
	\noindent (1) We divide the proof in to two cases.\\ 
	\textbf{Case 1.} Let $p\geq 3$. Take $S=G\setminus X_n$, where $X_n$ is given in~\eqref{partion name}. Then by~\eqref{str of pm}, $\mathcal{RP}(\mathbb Z_{p^n})-S=\overline{K}_{p^{n-1}(p-1)}$. It follows that the number of components of $\mathcal{RP}(\mathbb Z_{p^n})-S$ is $p^{n-1}(p-1)>p^{n-1}=|S|$. Therefore,  $\mathcal{RP}(G)$ is non-Hamiltonian. 
	
	\noindent \textbf{Case 2.} Assume that $p=2$. Clearly, $\mathcal{RP}(\mathbb Z_2)\cong K_2$, which is non-Hamiltonian. 
	Now	let $n\geq 2$. Then by Proposition~\ref{alpha} and Corollary \ref{kappa},  $\alpha(\mathcal{RP}(\mathbb Z_{2^n}))=2^{n-1}= \kappa(\mathcal{RP}(\mathbb Z_{2^n}))$. Also by \eqref{str of pm}, $\mathcal{RP}^*(G)$ is connected. It follows that $\mathcal{RP}(G)$ is 2-connected, since the identity element adjacent to all the elements of  $\mathcal{RP}(G)$. Therefore, $\mathcal{RP}(G)$ is Hamiltonian. 
	
	(2) Proof follows by taking $S=G\setminus (X_n \cup \{e\})$ and following the similar argument as in the proof of part (1).	
\end{proof}

\subsection{Dihedral groups, Quaternion groups and Semi-dihedral groups}


The dihedral group of order $2n$ ($n\geq 3$) is given by $D_{2n}=\left\langle a,b \mid a^n=e=b^2,\right.$ $\left. ab=ba^{-1} \right\rangle $.
\begin{pro}\label{dihedral 1}
	For an integer $n\geq 3$,
	\begin{enumerate}[\normalfont(1)]
		\item $\kappa(\mathcal{RP}(D_{2n}))=1$ and $\kappa(\mathcal{RP}^*(D_{2n}))=\kappa(\mathcal{RP}^*(\mathbb Z_{n}))$;
		\item $\alpha(\mathcal{RP}(D_{2n}))=n+\alpha(\mathcal{RP}(\mathbb Z_n))$;
		
		\item $\mathcal{RP}(D_{2n})$ and $\mathcal{RP}^*(D_{2n})$ are non-Hamiltonian.
	\end{enumerate}
\end{pro}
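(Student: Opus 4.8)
The plan is to analyze the structure of $\mathcal{RP}(D_{2n})$ and $\mathcal{RP}^*(D_{2n})$ directly from the subgroup lattice of $D_{2n}$, separating the $n$ reflections (each generating a copy of $\mathbb Z_2$, but these $n$ copies are pairwise distinct since a reflection has order $2$ and $\langle b a^i\rangle \ne \langle ba^j\rangle$ for $i \ne j$) from the cyclic part $\langle a \rangle \cong \mathbb Z_n$. The key observation is that a reflection $x = ba^i$ satisfies $o(x) = 2$, so in $\mathcal{RP}^*(D_{2n})$ it is adjacent only to elements $y$ with $\langle x \rangle \subsetneq \langle y \rangle$; but no element of $D_{2n}$ properly contains $\langle ba^i\rangle$ (a reflection lies in no cyclic subgroup of order $>2$), and $\langle ba^i\rangle$ properly contains only $\{e\}$. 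Hence each reflection is an isolated vertex of $\mathcal{RP}^*(D_{2n})$, and in $\mathcal{RP}(D_{2n})$ each reflection is a pendant vertex adjacent only to $e$. Consequently $\mathcal{RP}^*(D_{2n})$ is the disjoint union of $\mathcal{RP}^*(\mathbb Z_n)$ (on the rotation subgroup $\langle a\rangle\setminus\{e\}$, noting that no reflection can be adjacent to a rotation either, since orders must divide one another and $2 \mid o(y)$ would not force $\langle ba^i\rangle \subset \langle y\rangle$ as $y$ is a rotation) together with $n$ isolated vertices, and $\mathcal{RP}(D_{2n}) = K_1 + \mathcal{RP}^*(D_{2n})$.

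For part (1): since $\mathcal{RP}^*(D_{2n})$ has isolated vertices (the $n\geq 3$ reflections), $\mathcal{RP}(D_{2n})$ becomes disconnected upon removing the single vertex $e$, and it is connected (every vertex is adjacent to $e$), so $\kappa(\mathcal{RP}(D_{2n})) = 1$. For the proper graph, removing any vertex set $S$ from $\mathcal{RP}^*(D_{2n})$ disconnects it iff it disconnects the $\mathbb Z_n$-component (the reflections being already isolated do not affect connectivity of the nontrivial part), so $\kappa(\mathcal{RP}^*(D_{2n})) = \kappa(\mathcal{RP}^*(\mathbb Z_n))$; here one should note the degenerate case where $\mathcal{RP}^*(\mathbb Z_n)$ is itself already disconnected or trivial, in which case both sides are $0$, so the identity still holds. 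For part (2): an independent set in $\mathcal{RP}(D_{2n})$ cannot contain $e$ together with any other vertex, and since the $n$ reflections are mutually non-adjacent and non-adjacent to everything in the rotation part, a maximum independent set consists of all $n$ reflections together with a maximum independent set among the rotations, giving $\alpha(\mathcal{RP}(D_{2n})) = n + \alpha(\mathcal{RP}^*(\mathbb Z_n)) = n + \alpha(\mathcal{RP}(\mathbb Z_n))$, using $\alpha(\mathcal{RP}(\mathbb Z_n)) = \alpha(\mathcal{RP}^*(\mathbb Z_n))$ from the remark preceding the cyclic subsection.

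For part (3): both graphs have a nonempty proper vertex subset $S$ for which $D_{2n} - S$ (resp. the proper graph minus $S$) has more than $|S|$ components. For $\mathcal{RP}^*(D_{2n})$, since $n\geq 3$ there are already at least $3$ isolated reflection-vertices, so the graph is disconnected and hence non-Hamiltonian outright. For $\mathcal{RP}(D_{2n})$, take $S = \{e\}$: then $\mathcal{RP}(D_{2n}) - S = \mathcal{RP}^*(D_{2n})$ has at least $n \geq 3 > 1 = |S|$ components, contradicting the Tutte-type necessary condition (\cite[Theorem 6.3.4]{balakrishnan GT book}) quoted at the start of Section 5; hence $\mathcal{RP}(D_{2n})$ is non-Hamiltonian. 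I expect the main (minor) obstacle to be bookkeeping the degenerate small cases — checking that the claimed adjacency structure (reflections isolated; rotation part an induced $\mathcal{RP}^*(\mathbb Z_n)$) genuinely holds with no stray edges, and handling $n$ prime where $\mathcal{RP}^*(\mathbb Z_n)$ is empty so that the connectivity identity reads $0 = 0$ — but none of this requires real work beyond careful case separation.
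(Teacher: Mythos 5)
Your proof is correct and follows essentially the same route as the paper: both establish the decomposition $\mathcal{RP}(D_{2n})\cong K_1+\bigl(\mathcal{RP}^*(\mathbb Z_n)\cup \overline{K}_n\bigr)$ (each reflection generates a maximal subgroup of order $2$, hence is pendant at $e$ and isolated in the proper graph) and then read off $\kappa$, $\alpha$ and non-Hamiltonicity from that structure. The one caveat, which you share with the paper rather than introduce, is that the identity $\kappa(\mathcal{RP}^*(D_{2n}))=\kappa(\mathcal{RP}^*(\mathbb Z_n))$ only makes sense under the convention that one measures the connectivity of the non-trivial component, since $\mathcal{RP}^*(D_{2n})$ is literally disconnected by its $n$ isolated reflection vertices.
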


\begin{proof}
	Since $D_{2n}=\left\langle a\mid a^n=e\right\rangle \cup \{a^ib\mid 1\leq i\leq n\}$, it is easy to see that the structure of
	
	\begin{figure}[ht]
		\begin{center}
			\includegraphics [scale=0.8] {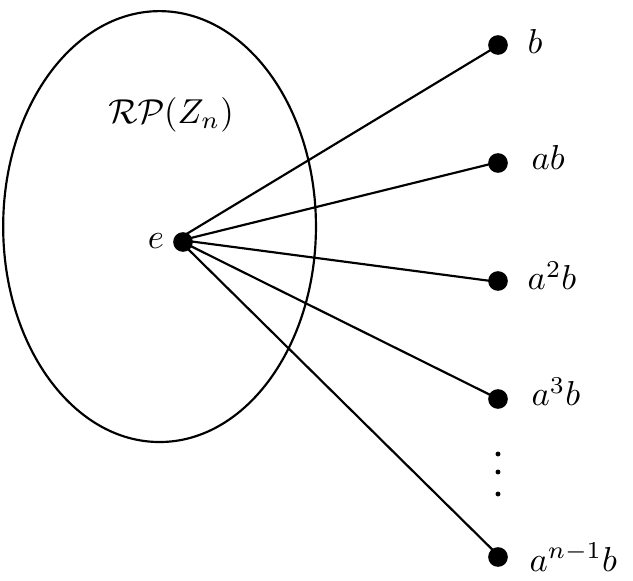}
			\caption{The graph $\mathcal{RP}(D_{2n})$} \label{str dn}
		\end{center}
	\end{figure}
	
	$\mathcal{RP}(D_{2n})$ is as shown in Figure~\ref{str dn} and so
	\begin{align}
	\mathcal{RP}(D_{2n})\cong K_1+\left( \mathcal{RP}^*(\mathbb Z_n)\cup \overline{K}_n \right)\label{str Dn},\\
	\mathcal{RP}^*(D_{2n})=\mathcal{RP}^*(\mathbb Z_{n})\cup \overline{K}_n \label{str1 Dn}
	\end{align} 
	
	By \eqref{str Dn}, we can determine the values of $\kappa(\mathcal{RP}(D_{2n}))$ and $\alpha(\mathcal{RP}(D_{2n}))$. Moreover, $\mathcal{RP}(D_{2n})$ has a cut vertex and so it is non-Hamiltonian.  The proofs for $\mathcal{RP}^*(D_{2n})$ follows from \eqref{str1 Dn}. 
\end{proof}

\noindent The quaternion group of order $4n$ ($n\geq 2$) is given by $Q_{4n}=\left\langle a, b\mid  a^{2n}=e=b^4, \right.$ $\left. bab^{-1}=a^{-1} \right\rangle $.
\begin{pro}\label{Qn nonhamil}
	For an integer $n\geq 2$,
	\begin{enumerate}[\normalfont(1)]
		\item  $\kappa(\mathcal{RP}(Q_{4n}))=2$ and $\kappa(\mathcal{RP}^*(Q_{4n}))=1$;
		
		\item $\alpha(\mathcal{RP}(Q_{4n}))\in \left\lbrace 2n+\alpha(\mathcal{RP}(\mathbb Z_{2n}))-1, 2n+\alpha(\mathcal{RP}(\mathbb Z_{2n})) \right\rbrace $;  
		\item $\mathcal{RP}(Q_{4n})$ and $\mathcal{RP}^*(Q_{4n})$ are non-Hamiltonian.
	\end{enumerate}
\end{pro}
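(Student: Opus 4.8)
The plan is to first determine the structure of $\mathcal{RP}(Q_{4n})$ from the subgroup lattice of $Q_{4n}$, exactly in the spirit of \eqref{str Dn}--\eqref{str1 Dn} for the dihedral case, and then to deduce (1)--(3) as in Proposition~\ref{dihedral 1}.

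I would start by recording the facts to be used: $\langle a\rangle$ is a cyclic subgroup of order $2n$ and index $2$, $Q_{4n}$ has a unique involution $z:=a^{n}$, and every element of $Q_{4n}\setminus\langle a\rangle$ is of the form $a^{i}b$, has order $4$, and satisfies $(a^{i}b)^{2}=z$. Hence each $\langle a^{i}b\rangle=\{e,\,a^{i}b,\,z,\,(a^{i}b)^{-1}\}$ is a maximal cyclic subgroup of order $4$, contains $\langle z\rangle$, and meets $\langle a\rangle$ only in $\langle z\rangle$. Since two distinct vertices of $\mathcal{RP}$ are adjacent exactly when one generates a proper subgroup of the other, it follows that in $\mathcal{RP}(Q_{4n})$ the vertex $a^{i}b$ is adjacent only to $e$ and $z$: the proper subgroups of $\langle a^{i}b\rangle\cong\mathbb Z_{4}$ are just $\{e\}$ and $\langle z\rangle$, and no cyclic subgroup of $Q_{4n}$ properly contains $\langle a^{i}b\rangle$ because every cyclic subgroup lies in $\langle a\rangle$ or equals some $\langle a^{j}b\rangle$ of order $4$, while $a^{i}b\notin\langle a\rangle$. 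Consequently $\mathcal{RP}^{*}(Q_{4n})$ is $\mathcal{RP}^{*}(\mathbb Z_{2n})$ with the $2n$ pairwise non-adjacent pendant vertices $a^{0}b,a^{1}b,\dots,a^{2n-1}b$ attached at the vertex $z$, and $\mathcal{RP}(Q_{4n})=K_{1}+\mathcal{RP}^{*}(Q_{4n})$ with $e$ as the apex (a picture analogous to Figure~\ref{str dn}).

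Granting this structure, the three parts follow quickly. For (1): $\mathcal{RP}^{*}(\mathbb Z_{2n})$ is connected (by Theorem~\ref{connected RPG} together with the connectedness of $\mathcal{P}^{*}(\mathbb Z_{2n})$) and contains $z$, so $\mathcal{RP}^{*}(Q_{4n})$ is connected and $z$ is a cut vertex of it (deleting $z$ isolates the $2n\ge4$ vertices $a^{i}b$), giving $\kappa(\mathcal{RP}^{*}(Q_{4n}))=1$; in $\mathcal{RP}(Q_{4n})$ the identity is adjacent to every other vertex, so removing a single vertex cannot disconnect it (removing $e$ leaves the connected $\mathcal{RP}^{*}(Q_{4n})$), while removing $\{e,z\}$ does, so $\kappa(\mathcal{RP}(Q_{4n}))=2$. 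For (3): by the fact recalled at the start of this section, a Hamiltonian graph has at most $|S|$ components after deleting any nonempty proper vertex set $S$; taking $S=\{z\}$ in $\mathcal{RP}^{*}(Q_{4n})$ and $S=\{e,z\}$ in $\mathcal{RP}(Q_{4n})$, the deletion leaves the $2n$ isolated vertices $a^{i}b$ together with the (non-empty) graph $\mathcal{RP}^{*}(\mathbb Z_{2n})-z$, i.e. at least $2n+1>2\ge|S|$ components, so neither graph is Hamiltonian. For (2): $\alpha(\mathcal{RP}(Q_{4n}))=\alpha(\mathcal{RP}^{*}(Q_{4n}))$ since $\mathcal{RP}(Q_{4n})=K_{1}+\mathcal{RP}^{*}(Q_{4n})$; a maximum independent set of $\mathcal{RP}^{*}(Q_{4n})$ that contains $z$ contains no $a^{i}b$, hence has size at most $\alpha(\mathcal{RP}^{*}(\mathbb Z_{2n}))\le 2n-1$, whereas the set consisting of all $2n$ vertices $a^{i}b$ together with a maximum independent set of $\mathcal{RP}^{*}(\mathbb Z_{2n})-z$ is independent of size $2n+\alpha(\mathcal{RP}^{*}(\mathbb Z_{2n})-z)\ge 2n-1+\alpha(\mathcal{RP}^{*}(\mathbb Z_{2n}))$, which is strictly larger (as $2n\ge4$); hence $\alpha(\mathcal{RP}^{*}(Q_{4n}))=2n+\alpha(\mathcal{RP}^{*}(\mathbb Z_{2n})-z)$, and since deleting one vertex changes the independence number by at most $1$ and $\alpha(\mathcal{RP}^{*}(\mathbb Z_{2n}))=\alpha(\mathcal{RP}(\mathbb Z_{2n}))$, this value lies in $\{2n+\alpha(\mathcal{RP}(\mathbb Z_{2n}))-1,\ 2n+\alpha(\mathcal{RP}(\mathbb Z_{2n}))\}$.

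The only substantive point is the structural step — showing that everything outside $\langle a\rangle$ has order exactly $4$ with square $z$, that the corresponding order-$4$ subgroups are maximal cyclic, and that they meet $\langle a\rangle$ and one another only in $\langle z\rangle$ — which is what makes $\mathcal{RP}^{*}(Q_{4n})$ equal to $\mathcal{RP}^{*}(\mathbb Z_{2n})$ with $2n$ leaves hung on $z$. After that, (1)--(3) parallel the dihedral computation; the one spot needing a line of care is the comparison in (2) ruling out the $z$-containing independent set, which goes through because $2n\ge4$.
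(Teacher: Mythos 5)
Your proposal is correct and follows essentially the same route as the paper: you establish the same structural picture of $\mathcal{RP}(Q_{4n})$ (each $a^ib$ has order $4$, generates a maximal cyclic subgroup meeting everything else in $\langle a^n\rangle$, hence is adjacent only to $e$ and $a^n$), and then read off the connectivity, the component count after deleting $\{e,a^n\}$ for non-Hamiltonicity, and the independence number exactly as in the paper's proof. Your phrasing of part (2) via $\alpha(\mathcal{RP}^*(\mathbb Z_{2n})-z)$ is equivalent to the paper's case split on whether some maximum independent set of $\mathcal{RP}(\mathbb Z_{2n})$ avoids the involution, and is if anything slightly more explicit.
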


\begin{proof}
	Note that $Q_{4n}=\left\langle a\mid a^{2n}=e\right\rangle \cup \{a^ib\mid 1\leq i\leq 2n\}$. It has $\left\langle a^ib\right\rangle $ as a maximal cyclic subgroup of order $4$ and $\left\langle a^ib \right\rangle \cap \left\langle a^jb \right\rangle =\{e,a^n\}$, $i,j\in \{1,2,\ldots, 2n\}$, for all $i\neq j$. So $a^n$ is adjacent to all the elements of order 4 in $\mathcal{RP}(Q_{4n})$. From these facts, it is easy to see that the structure of  $\mathcal{RP}(Q_{4n})$ is as in Figure~\ref{str qn}. 
	
	\begin{figure}[ht]
		\begin{center}
			\includegraphics [scale=0.8] {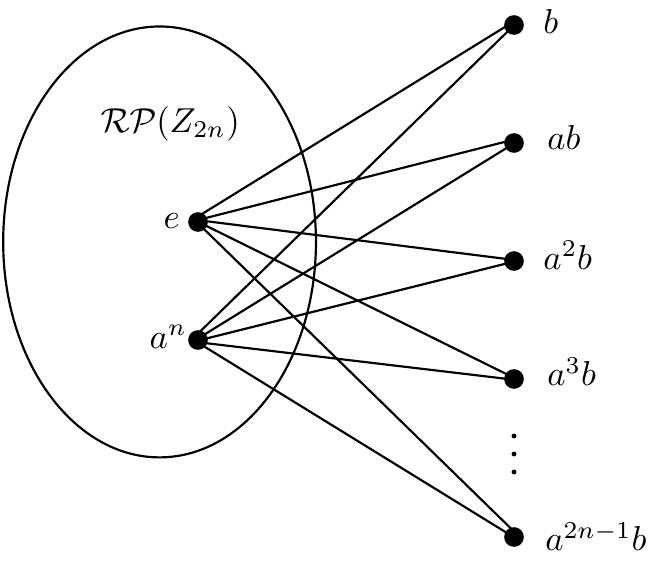}
			\caption{The graph $\mathcal{RP}(Q_{4n})$} \label{str qn}
		\end{center}
	\end{figure}
	So $\mathcal{RP}(Q_{4n})$ has no cut vertex and $\mathcal{RP}(Q_{4n})- \{e,a^n\}$ is disconnected. Therefore, $\kappa(\mathcal{RP}(Q_{4n}))=2$ and $\kappa(\mathcal{RP}^*(Q_{4n}))=1$. It follows that $\mathcal{RP}^*(Q_{4n})$ is non-Hamiltonian. Moreover, the number of components of  $\mathcal{RP}(Q_{4n})-\{e,a^n\}$ is at least $2n+1>2$. So, $\mathcal{RP}(Q_{4n})$ is non-Hamiltonian. It can be seen from Figure~\ref{str qn} that if at least one of a maximal independent set of $\mathcal{RP}(\mathbb Z_{2n })$ does not have the element of order 2 in $\mathbb Z_{2n}$, then $\alpha(\mathcal{RP}(Q_{4n}))=2n+\alpha(\mathcal{RP}(\mathbb Z_{2n}))$; otherwise, $\alpha(\mathcal{RP}(Q_{4n}))=2n+\alpha(\mathcal{RP}(\mathbb Z_{2n}))-1$. 
\end{proof}

\noindent The semi-dihedral group of order $8n$ ($n\geq 2$) is given by $SD_{8n}=\left\langle a, b\mid a^{4n}=e \right.$ $\left.=b^2 bab^{-1}=a^{2n-1} \right\rangle $.
\begin{pro}
	For an integer $n\geq 3$,
	\begin{enumerate}[\normalfont(1)]
		\item  $\kappa(\mathcal{RP}(SD_{8n}))=1= \kappa(\mathcal{RP}^*(SD_{8n}))$;
		
		\item $\alpha(\mathcal{RP}(SD_{8n}))\in \left\lbrace 4n+\alpha(\mathcal{RP}(\mathbb Z_{4n}))-1, 4n+\alpha(\mathcal{RP}(\mathbb Z_{4n}))\right\rbrace $;
		\item  $\mathcal{RP}(SD_{8n})$ and $\mathcal{RP}^*(SD_{8n})$ are non-Hamiltonian.
	\end{enumerate}
\end{pro}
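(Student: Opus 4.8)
The plan is to follow the template of the proofs of Propositions~\ref{dihedral 1} and \ref{Qn nonhamil}: determine the cyclic subgroup lattice of $SD_{8n}$, use it to describe the structure of $\mathcal{RP}(SD_{8n})$ and $\mathcal{RP}^*(SD_{8n})$ (recorded in a figure in the spirit of Figures~\ref{str dn} and \ref{str qn}), and then read the three conclusions off that picture. Write $SD_{8n}=\langle a\rangle\cup\{a^{i}b:1\le i\le 4n\}$ with $o(a)=4n$. The first step is the element-order computation: from $ba=a^{2n-1}b$ one gets $ba^{i}=a^{i(2n-1)}b$, hence $(a^{i}b)^{2}=a^{2ni}$, so $o(a^{i}b)=2$ when $i$ is even and $o(a^{i}b)=4$ when $i$ is odd, in which case $(a^{i}b)^{2}=a^{2n}$ is the unique involution of $\langle a\rangle$. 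Consequently the cyclic subgroups of $SD_{8n}$ are the subgroups of $\langle a\rangle$, the $2n$ maximal cyclic subgroups $\langle a^{i}b\rangle$ ($i$ even) of order $2$, and the $n$ maximal cyclic subgroups $\langle a^{i}b\rangle=\{e,a^{i}b,a^{2n},a^{2n+i}b\}$ ($i$ odd) of order $4$, each of which contains $\langle a^{2n}\rangle$; no two of the subgroups $\langle a^{i}b\rangle$ are comparable with one another.

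From this the graph structure is immediate: the subgraph of $\mathcal{RP}(SD_{8n})$ induced on $\langle a\rangle$ is $\mathcal{RP}(\mathbb Z_{4n})$, each of the $2n$ involutions $a^{i}b$ ($i$ even) is adjacent only to $e$, each of the $2n$ elements $a^{i}b$ ($i$ odd) of order $4$ is adjacent exactly to $e$ and to $a^{2n}$, and no two of the $a^{i}b$ are adjacent. Part (1) is then read off this description exactly as for $D_{2n}$ in Proposition~\ref{dihedral 1}: $e$ is a cut vertex of $\mathcal{RP}(SD_{8n})$ and, being the only possible cut vertex of a reduced power graph, gives $\kappa(\mathcal{RP}(SD_{8n}))=1$; the value $\kappa(\mathcal{RP}^*(SD_{8n}))=1$ comes from the same description, the involution $a^{2n}$ playing the role of the cut vertex (note, consistently with Proposition~\ref{cut1}, that it has order $2$). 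Part (3) then follows at once: by \cite[Theorem 6.3.4]{balakrishnan GT book} applied to the cut vertex $e$ of $\mathcal{RP}(SD_{8n})$ and to the cut vertex of $\mathcal{RP}^*(SD_{8n})$ furnished by part (1), neither $\mathcal{RP}(SD_{8n})$ nor $\mathcal{RP}^*(SD_{8n})$ is Hamiltonian.

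For part (2) I would argue as in Proposition~\ref{Qn nonhamil}, using $\alpha(\mathcal{RP}(SD_{8n}))=\alpha(\mathcal{RP}^*(SD_{8n}))$. Since the $4n$ elements $a^{i}b$ are pairwise non-adjacent, the $2n$ of order $2$ being joined only to $e$ and the $2n$ of order $4$ only to $e$ and $a^{2n}$, a maximum independent set is obtained either by adjoining all $4n$ of the $a^{i}b$ to a maximum independent set of $\mathcal{RP}^*(\mathbb Z_{4n})$ that avoids $a^{2n}$, or --- when every maximum independent set of $\mathcal{RP}^*(\mathbb Z_{4n})$ contains $a^{2n}$ --- by adjoining all $4n$ of the $a^{i}b$ after deleting $a^{2n}$ from such a set; here $n\ge 3$ makes $4n-1>2n$, so the second option beats the alternative of keeping $a^{2n}$ and adjoining only the $2n$ involutions. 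Comparing the two cases gives $\alpha(\mathcal{RP}(SD_{8n}))=4n+\alpha(\mathcal{RP}(\mathbb Z_{4n}))$ when some maximum independent set of $\mathcal{RP}(\mathbb Z_{4n})$ misses the involution and $4n+\alpha(\mathcal{RP}(\mathbb Z_{4n}))-1$ otherwise, which is the assertion. The main obstacle throughout is the first step --- getting the subgroup lattice of $SD_{8n}$ right, in particular the identity $(a^{i}b)^{2}=a^{2ni}$ and the fact that all $n$ order-$4$ subgroups outside $\langle a\rangle$ share the single involution $a^{2n}$ --- since everything else is bookkeeping on $\mathcal{RP}(\mathbb Z_{4n})$, exactly as in Propositions~\ref{dihedral 1} and \ref{Qn nonhamil}.
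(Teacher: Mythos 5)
Your proposal is correct and follows essentially the same route as the paper: you determine the cyclic subgroup structure of $SD_{8n}$ (the $2n$ maximal order-$2$ subgroups $\langle a^ib\rangle$ for $i$ even and the $n$ maximal order-$4$ subgroups for $i$ odd, all sharing the involution $a^{2n}$), read off the structure of $\mathcal{RP}(SD_{8n})$, and then extract the connectivity, non-Hamiltonicity via cut vertices, and the two-case computation of the independence number exactly as the paper does; your only addition is that you carry out the order computation $(a^ib)^2=a^{2ni}$ explicitly, which the paper leaves implicit. Note that both your argument and the paper's gloss over the fact that the $2n$ involutions $a^ib$ ($i$ even) are isolated vertices of $\mathcal{RP}^*(SD_{8n})$, so that graph is already disconnected and the claim $\kappa(\mathcal{RP}^*(SD_{8n}))=1$ rests on reading ``cut vertex'' rather than the stated definition of connectivity.
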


\begin{proof}
	Note that 
	
	$SD_{8n}=\left\langle a\mid a^{4n}=e \right\rangle \cup \{a^kb\mid 1\leq k\leq 4n~\text{and}~ k ~\text{is even}\} \cup \{a^kb\mid 1\leq k\leq 4n~\text{and}~ k~ \text{is odd}\}$. 
	
	\begin{figure}[h!]
		\begin{center}
			\includegraphics [scale=0.8] {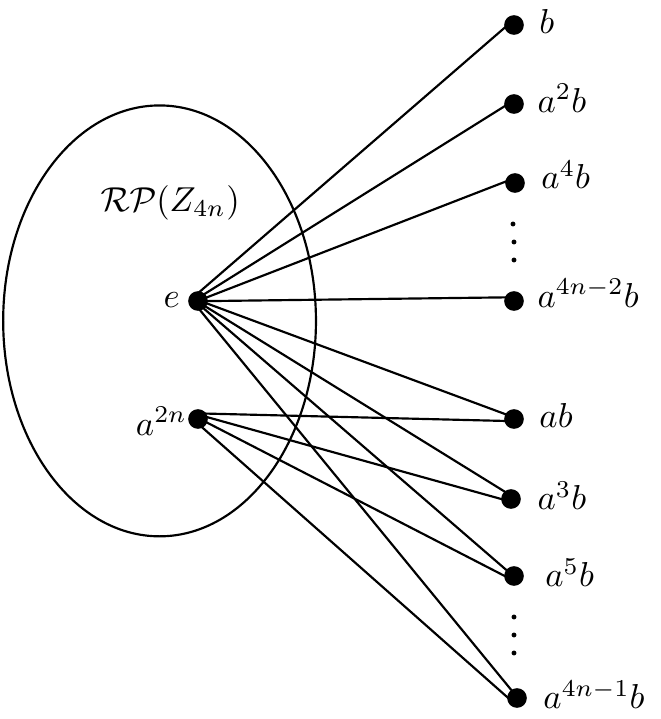}
			\caption{The graph $\mathcal{RP}(SD_{8n})$} \label{str qd}
		\end{center}
	\end{figure}  
	
	Here for each $k$,  $1\leq k\leq 4n$, $k$ is odd,  $\left\langle a^kb\right\rangle $  is a maximal cyclic subgroup of order 4 in $SD_{8n}$ and all subgroups of $SD_{8n}$ have an element of order 2 in $\left\langle a \right\rangle $ as common, so they are adjacent only to the identity element and the element of order 2 in $\left\langle a \right\rangle $; $\left\langle a^kb\right\rangle $ where $k$ is even, $1\leq k\leq 4n$ is maximal cyclic subgroup of order 2, so they are adjacent only the identity element in $SD_{8n}$. Hence the structure of  $\mathcal{RP}(SD_{8n})$ is as shown in Figure~\ref{str qd}.

	Moreover, the identity element is a cut vertex of $\mathcal{RP}(SD_{8n})$ and $a^{2n}$ is a  cut vertex of  $\mathcal{RP}^*(SD_{8n})$ and so these two graphs are non-Hamiltonian and their connectivity is 1. From  Figure~\ref{str qd} it is not hard to see that if at least one of a maximal independent set of $\mathcal{RP}(\mathbb Z_{4n})$ doesn't have the element of order 2, then $\alpha(\mathcal{RP}(SD_{8n}))=4n+\alpha(\mathcal{RP}(\mathbb Z_{4n}))$; otherwise, $\alpha(\mathcal{RP}(SD_{8n}))=4n+\alpha(\mathcal{RP}(\mathbb Z_{4n}))-1$. 
\end{proof}

\subsection{Finite $p$-groups}

\begin{cor}
	Let $G$ be a finite $p$-group, where $p$ is a prime. Then 
	\begin{center}
		$\kappa(\mathcal{RP}(G))=
		\begin{cases}
		p^{m-1}, & \text{if}~G\cong \mathbb Z_{p^m}, m\geq 1;\\
		2,& \text{if}~G\cong Q_{2^\alpha}, \alpha\geq 3;\\
		1, &\text{otherwise}.
		\end{cases}$
	\end{center}
\end{cor}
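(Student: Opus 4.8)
The plan is to split into three cases according to the structure of $G$ and use the connectivity results already available for $\mathcal{RP}^*(G)$, together with the identity relation $\mathcal{RP}(G) = K_1 + \mathcal{RP}^*(G)$. First I would record the elementary observation that for any graph $\Gamma$ on at least two vertices, $\kappa(K_1 + \Gamma) = \kappa(\Gamma) + 1$ when $\Gamma$ is connected, and $\kappa(K_1 + \Gamma) = 1$ when $\Gamma$ is disconnected (since then the apex vertex is a cut vertex and removing it disconnects the graph, while removing any single other vertex keeps it connected through the apex). By Corollary~\ref{connected p group}, $\mathcal{RP}^*(G)$ is connected if and only if $G$ is cyclic or generalized quaternion; in all other cases $\mathcal{RP}^*(G)$ is disconnected, which immediately gives $\kappa(\mathcal{RP}(G)) = 1$, covering the ``otherwise'' branch.

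Next I would handle the cyclic case: if $G \cong \mathbb{Z}_{p^m}$ with $m \geq 1$, then Corollary~\ref{kappa} already gives $\kappa(\mathcal{RP}(\mathbb{Z}_{p^m})) = p^{m-1}$, so this branch is done by direct citation. For the generalized quaternion case $G \cong Q_{2^\alpha}$ with $\alpha \geq 3$, I would invoke Proposition~\ref{Qn nonhamil}(1) with $4n = 2^\alpha$ (i.e. $n = 2^{\alpha-2} \geq 2$), which states $\kappa(\mathcal{RP}(Q_{4n})) = 2$; this settles the middle branch. Finally, I would note that these three cases — cyclic $p$-group, generalized quaternion $2$-group, and everything else — exhaust all finite $p$-groups, so combining the three computations yields the stated formula.

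The only point requiring a little care, and the mild obstacle, is making sure the $\mathbb{Z}_{p^m}$ and $Q_{2^\alpha}$ subcases do not overlap with, or contradict, the ``otherwise'' branch: one should check that $\mathbb{Z}_{p^m}$ is not of the form $Q_{2^\alpha}$ and vice versa (true, since $\mathbb{Z}_{p^m}$ is abelian while $Q_{2^\alpha}$ is not for $\alpha \geq 3$), and that the small orders are consistent — e.g. $G \cong \mathbb{Z}_2$ gives $\kappa(\mathcal{RP}(\mathbb{Z}_2)) = \kappa(K_2) = 1 = p^{m-1}$, and $Q_8$ is genuinely in the generalized quaternion branch with $\kappa(\mathcal{RP}(Q_8)) = \kappa(K_1 + K_{1,6}) = 2$. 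With these consistency checks in place the proof is a short assembly of Corollaries~\ref{kappa} and~\ref{connected p group} and Proposition~\ref{Qn nonhamil}.
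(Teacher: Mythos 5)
Your proposal is correct and follows essentially the same route as the paper, which simply cites Proposition~\ref{Qn nonhamil}(1), Corollary~\ref{connected p group} and Corollary~\ref{kappa}; you have merely filled in the routine details (in particular, that disconnectedness of $\mathcal{RP}^*(G)$ makes the identity a cut vertex of $\mathcal{RP}(G)$, forcing $\kappa(\mathcal{RP}(G))=1$ in the ``otherwise'' branch). No gaps.
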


\begin{proof}
	Proof follows from Proposition~\ref{Qn nonhamil}(1), Corollaries~\ref{connected p group} and~\ref{kappa} 
\end{proof}

\begin{cor}\label{pgroup hamiltonian}
	Let $G$ be a finite $p$-group, where $p$ is a prime. Then
	\begin{enumerate}[\normalfont(1)]
		\item  $\mathcal{RP}(G)$ is Hamiltonian if and only if $G\cong \mathbb Z_{2^n}$, $n\geq 2$;
		\item $\mathcal{RP}^*(G)$ is non-Hamiltonian.
	\end{enumerate}	 
\end{cor}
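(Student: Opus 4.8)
The plan is to deduce both parts from structural facts already established in the excerpt, reducing the $p$-group case to the cyclic case handled in Proposition~\ref{cyclic pgroup hamiltonian} together with the connectivity dichotomy of Corollary~\ref{connected p group}. First I would recall that for any finite group $G$ we have $\mathcal{RP}(G)=K_1+\mathcal{RP}^*(G)$, so $\mathcal{RP}(G)$ is connected whenever $G$ is nontrivial, but that connectivity alone is far from sufficient for Hamiltonicity; the obstruction-counting criterion (\cite[Theorem 6.3.4]{balakrishnan GT book}) — namely, that a Hamiltonian graph $\Gamma$ satisfies $c(\Gamma-S)\le|S|$ for every nonempty proper $S\subseteq V(\Gamma)$ — will do the work on the negative side.

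For part (1): if $G\cong\mathbb{Z}_{2^n}$ with $n\ge2$, Proposition~\ref{cyclic pgroup hamiltonian}(1) already gives that $\mathcal{RP}(G)$ is Hamiltonian, so only the converse needs argument. Suppose $G$ is a finite $p$-group that is not isomorphic to any $\mathbb{Z}_{2^n}$ with $n\ge2$. I would split into cases according to whether $G$ is cyclic. If $G$ is cyclic, then $G\cong\mathbb{Z}_{p^n}$ with either $p$ odd, or $p=2$ and $n\le1$; in all these cases Proposition~\ref{cyclic pgroup hamiltonian}(1) says $\mathcal{RP}(G)$ is non-Hamiltonian. If $G$ is noncyclic, then by Corollary~\ref{connected p group}, $\mathcal{RP}^*(G)$ is connected exactly when $G\cong Q_{2^\alpha}$; in that subcase Proposition~\ref{Qn nonhamil}(3) gives that $\mathcal{RP}(Q_{4n})$ is non-Hamiltonian. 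In the remaining subcase $\mathcal{RP}^*(G)$ is disconnected, say with $c\ge2$ components; then taking $S=\{e\}$ in $\mathcal{RP}(G)=K_1+\mathcal{RP}^*(G)$ yields $c(\mathcal{RP}(G)-S)=c\ge2>1=|S|$, so by \cite[Theorem 6.3.4]{balakrishnan GT book} $\mathcal{RP}(G)$ is non-Hamiltonian. Combining all cases gives the claimed equivalence.

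For part (2): I would show $\mathcal{RP}^*(G)$ is non-Hamiltonian for every finite $p$-group $G$. If $\mathcal{RP}^*(G)$ is disconnected (or has fewer than three vertices), it is trivially non-Hamiltonian. By Corollary~\ref{connected p group} the only connected cases are $G\cong\mathbb{Z}_{p^n}$ or $G\cong Q_{2^\alpha}$. For $G\cong\mathbb{Z}_{p^n}$, Proposition~\ref{cyclic pgroup hamiltonian}(2) states $\mathcal{RP}^*(\mathbb{Z}_{p^n})$ is non-Hamiltonian; for $G\cong Q_{2^\alpha}$, Proposition~\ref{Qn nonhamil}(3) states $\mathcal{RP}^*(Q_{4n})$ is non-Hamiltonian (indeed $\kappa(\mathcal{RP}^*(Q_{4n}))=1$, so it has a cut vertex). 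This exhausts all finite $p$-groups, completing the proof.

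The only subtlety — and the step I would be most careful about — is the case analysis for noncyclic non-quaternion $p$-groups in part (1): one must be sure that "$\mathcal{RP}^*(G)$ disconnected" is correctly licensed by Corollary~\ref{connected p group} (which itself rests on Theorems~\ref{connected RPG} and \ref{con PG}), and that the component-count inequality $c\ge2>1$ is applied with the right $S$. Everything else is a bookkeeping assembly of results proved earlier in the paper, so no new combinatorial input is needed.
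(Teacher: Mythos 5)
Your proposal is correct and follows essentially the same route as the paper: both reduce to the cyclic and generalized quaternion cases via Corollary~\ref{connected p group} together with Propositions~\ref{cyclic  pgroup hamiltonian} and~\ref{Qn nonhamil}(3), and both handle the remaining noncyclic, non-quaternion $p$-groups by observing that disconnectedness of $\mathcal{RP}^*(G)$ makes the identity a cut vertex of $\mathcal{RP}(G)$ (your $c(\mathcal{RP}(G)-\{e\})\ge 2>1$ count is just this fact phrased via the component criterion). No gaps.
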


\begin{proof}
	%
	(1) If $G$ is a non-cyclic and non-quaternion $p$-group, then by Corollary~\ref{connected p group}, $\mathcal{RP}^*(G)$ is non-Hamiltonian. Moreover, the identity element in $G$ is a cut vertex of $\mathcal{RP}(G)$. Hence $\mathcal{RP}(G)$ is non-Hamiltonian. If $G \cong  \mathbb Z_{p^n}$ or  $Q_{2^\alpha}$, where $p$ is a prime,  $n\geq 1$ and $\alpha\geq 3$, then the proof follows by Propositions~\ref{cyclic  pgroup hamiltonian} and \ref{Qn nonhamil}(3).
	
	(2) Proof is similar to the proof of part (1). 
\end{proof}

\section{Number of components and diameter}

A component of a graph $\Gamma$ is its maximal connected subgraph. We denote the number of components of $\Gamma$ by $c(\Gamma)$. The distance $d(u,v)$ between two vertices $u$ and $v$ is the length of the shortest $u-v$ path in $\Gamma$; If there is no path between $u$ and $v$, we set $d(u,v)$ as $\infty$. The diameter $diam(\Gamma)$ of a connected graph $\Gamma$ is defined as $max\{d(u,v)\mid u,v\in \Gamma\}$. If $\Gamma$ is disconnected, then we define $diam(\Gamma) = \infty.$

In the following result, we explore a relationship between the number of components of the proper reduced power graph and the number of components of the proper power graph of a finite group.


\begin{thm}\label{connected}	
	
	Let $G$ be a finite group and $p$ be a prime. Then 	
	\begin{align*}
	c(\mathcal{RP}^*(G))	=	c(\mathcal{P}^*(G))+\displaystyle \sum_{p | |G|} r(p-2),
	\end{align*}

	\noindent where $r=0$, if $G \cong \mathbb Z_n$ with $n \neq p$; $r=1$, if $G \cong \mathbb Z_p$; and $r$ equals the number of maximal cyclic subgroups of prime order $p$ in $G$, otherwise.
\end{thm}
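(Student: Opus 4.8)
The plan is to understand how $\mathcal{RP}^*(G)$ differs from $\mathcal{P}^*(G)$ as a graph on the same vertex set $G\setminus\{e\}$: the only edges of $\mathcal{P}^*(G)$ that are deleted when passing to $\mathcal{RP}^*(G)$ are those between two distinct elements $x,y$ with $\langle x\rangle=\langle y\rangle$, i.e. edges inside the set of generators of a fixed cyclic subgroup $C=\langle x\rangle$. In $\mathcal{P}^*(G)$ the $\varphi(|C|)$ generators of $C$ form a clique, all mutually adjacent; in $\mathcal{RP}^*(G)$ these generators become pairwise non-adjacent (an independent set of size $\varphi(|C|)$), but each of them keeps all its edges to elements generating \emph{proper} subgroups or proper supergroups of $C$. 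So the first step is to fix a maximal cyclic subgroup $M$ and analyze, within the component of $\mathcal{P}^*(G)$ containing $M$, what happens to the generator-cliques: if $|M|$ is not prime, each generator $g$ of $M$ is still adjacent in $\mathcal{RP}^*(G)$ to some element generating a proper nontrivial subgroup of $M$ (such an element exists since $|M|$ is composite, hence $\langle g\rangle=M$ has a proper nontrivial subgroup), so the generators of $M$ all remain in one component and no new components are created at $M$. If $|M|=p$ is prime, then the $p-1$ generators of $M$ have, in $\mathcal{RP}^*(G)$, \emph{no} proper nontrivial subgroup to attach to, and they are adjacent only to elements whose cyclic subgroup \emph{properly contains} $M$ — but $M$ is maximal cyclic, so there are none. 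Hence each generator of a maximal cyclic subgroup of prime order $p$ becomes an isolated vertex in $\mathcal{RP}^*(G)$.

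The second step is the bookkeeping. Start from $c(\mathcal{P}^*(G))$. For each prime $p\mid|G|$ and each maximal cyclic subgroup $M$ of $G$ of order $p$, the set of $p-1$ generators of $M$ forms, inside $\mathcal{P}^*(G)$, a clique that in many cases is a whole component of $\mathcal{P}^*(G)$ by itself (this happens precisely in the "generic" case where $M$ meets no other cyclic subgroup nontrivially, which for order $p$ is automatic unless some element of order $p^2$ or $pq$ lies above it — but maximality rules the former out and the latter means $M$ isn't isolated in $\mathcal{P}^*(G)$). Passing to $\mathcal{RP}^*(G)$ splits this one clique-component into $p-1$ isolated vertices, a net change of $+(p-2)$ components. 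Summing over all primes $p$ and all such maximal cyclic subgroups of order $p$ gives the claimed correction term $\sum_{p\mid|G|} r(p-2)$, where $r$ counts those subgroups. One must then separately verify the two degenerate boundary cases encoded in the statement's definition of $r$: if $G\cong\mathbb Z_n$ with $n\neq p$ prime then $G$ has no maximal cyclic subgroup of prime order at all (the whole group is the unique maximal cyclic subgroup), so $r=0$; and if $G\cong\mathbb Z_p$ then $\mathcal{P}^*(G)=K_{p-1}$ is connected ($c=1$) while $\mathcal{RP}^*(G)=\overline{K}_{p-1}$ has $p-1$ components, matching $1+1\cdot(p-2)=p-1$ with $r=1$.

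The delicate point — and what I expect to be the main obstacle — is justifying that for a maximal cyclic subgroup $M$ of order $p$ in a \emph{non-cyclic, non-$\mathbb Z_p$} group $G$, the generators of $M$ really do form an entire component of $\mathcal{P}^*(G)$ (so that deleting the internal edges genuinely adds $p-2$ components and does not merely thin out a larger component). This requires showing that in $\mathcal{P}^*(G)$ no generator of $M$ is adjacent to anything outside $M$: any neighbour $y$ of a generator $g$ of $M$ in $\mathcal{P}^*(G)$ satisfies $\langle y\rangle\subseteq\langle g\rangle=M$ or $\langle y\rangle\supseteq M$; the former forces $\langle y\rangle$ trivial or equal to $M$ since $|M|=p$, and the latter contradicts maximality of $M$ unless $\langle y\rangle=M$. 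So indeed the neighbourhood of $g$ in $\mathcal{P}^*(G)$ is exactly the other $p-2$ generators of $M$, confirming the clique is a full component; and in $\mathcal{RP}^*(G)$ we additionally delete these remaining edges, isolating all $p-1$ generators. Care is also needed to confirm these generator-sets are pairwise disjoint across different maximal cyclic subgroups of order $p$ (immediate, since an element of order $p$ generates a unique subgroup), and that no component of $\mathcal{P}^*(G)$ not containing such a subgroup is affected at all (it isn't, since the only deleted edges lie inside generator-cliques, and a non-prime-order cyclic subgroup's generators stay connected to that subgroup's proper subgroups). Assembling these observations yields the displayed identity.
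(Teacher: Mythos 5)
Your proposal is correct and follows essentially the same route as the paper: both identify the deleted edges as those joining two generators of a common cyclic subgroup, observe that the components of $\mathcal{P}^*(G)$ consisting of a single such generator class are exactly the generator sets of maximal cyclic subgroups of prime order $p$ (each of which shatters into $p-1$ isolated vertices, a net gain of $p-2$ components), and check that every other component survives intact, with the cyclic cases handled separately to match the stated values of $r$. Your justification that the multi-class components remain connected (rerouting through a proper subgroup or a proper cyclic overgroup) is in fact spelled out more explicitly than in the paper, which only asserts the corresponding bijection between such components.
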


\begin{proof}
	
	We notice that each equivalence class of $\sim$ is entirely contained in the vertex set of a component of $\mathcal{P}^*(G)$, and the vertex set of each component of  $\mathcal{P}^*(G)$  is the union of some equivalence classes of $\sim$.  
	On the other hand, the elements of any equivalence class of $\sim$ are either entirely contained in the vertex set of a component of $\mathcal{RP}^*(G)$ or isolated vertices, and the vertex set of each component of   $\mathcal{RP}^*(G)$ is either an union of some equivalence classes of $\sim$ or consists exactly one isolated vertex.

	Also for each integer $k \geq 2$, there is a one to one correspondence between the set $\mathcal{A}_k$ of all components of $\mathcal{P}^*(G)$  containing  exactly $k$ equivalence class of $\sim$ and, the set $\mathcal{B}_k$  of all components of $\mathcal{RP}^*(G)$  containing  exactly $k$ equivalence class of $\sim$. If we take $$\mathcal{A} := \displaystyle \bigcup_{k\geq 2} \mathcal{A}_k$$  and  $$\displaystyle \mathcal{B} := \bigcup_{k\geq 2}\mathcal{B}_k,$$ then it follows that there is a one to one correspondence between $\mathcal{A}$ and $\mathcal{B}$.
	
	
	
	

	Now let $\mathcal{C}$ denote the  components of  $\mathcal{P}^*(G)$ whose vertex sets contains exactly one equivalence class of $\sim$. Note that
	\begin{align}
	c(\mathcal{P}^*(G))=|\mathcal{A}|+|\mathcal{C}|\label{eqn 2}
	\end{align}
	\noindent \textbf{Case 1.} Let $G$ be cyclic of order $n$.
	
	It is not hard to see that $c(\mathcal{RP}^*(G))=p-1=	c(\mathcal{P}^*(G))+ (p-2)$, if $n=p$ and $c(\mathcal{RP}^*(G))=1=	c(\mathcal{P}^*(G))$,  if $n \neq p$.
	
	\noindent \textbf{Case 2.} Let $G$ be non-cyclic.
	
	If $W \in \mathcal{C}$, then the elements in $W$ must be the non-trivial elements of a cyclic subgroup of order $p$ in $G$, where $p$ is a prime. This cyclic subgroup must be maximal in $G$, since $G$ is non-cyclic and $W$ is a component of $\mathcal{P}^*(G)$. Conversely, each maximal cyclic subgroup of prime order in $G$ corresponds to an element of  $\mathcal{C}$, since $G$ is non-cyclic. Thus, there is a one to one correspondence between $\mathcal{C}$ and the set of all maximal cyclic subgroups of prime order in $G$. Consequently,
	\begin{align}
	|\mathcal{C}| = \displaystyle \sum_{p | |G|} m_p,\label{eqn 1}
	\end{align}
	
	\noindent where $m_p$ denotes the number of maximal cyclic subgroups of prime order $p$ in $G$.

	Notice that the elements in $W$ induces a clique in  $\mathcal{P}^*(G)$	and so they are isolated  in $\mathcal{RP}^*(G)$. 	
	Thus $W$ corresponds to $p-1$ isolated vertices in $\mathcal{RP}^*(G)$. Conversely, each isolated vertex of $\mathcal{RP}^*(G)$ can be obtained in this way. Consequently,
	\begin{align}
	\text{the number of isolated vertices of~} \mathcal{RP}^*(G) = \displaystyle \sum_{p | |G|} m_p(p-1).\label{eqn 3}
	\end{align}

	By using \eqref{eqn 2}, \eqref{eqn 1} and \eqref{eqn 3}, we have
	\begin{eqnarray}
	c(\mathcal{RP}^*(G)) &=& |\mathcal{B}| + \text{ the number of isolated vertices of~} \mathcal{RP}^*(G) \notag\\
	&=& |\mathcal{A}| + \displaystyle \sum_{p | |G|} m_p(p-1) \notag\\
	&=& |\mathcal{A}| + \displaystyle \sum_{p | |G|} m_p + \displaystyle \sum_{p | |G|} m_p(p-2)\notag\\
	&=&	c(\mathcal{P}^*(G))+\displaystyle \sum_{p | |G|} m_p(p-2). \notag
	\end{eqnarray}
	This completes the proof.		
\end{proof}

An immediate consequence of Theorem~\ref{connected} is the following result.
\begin{cor}
	Let $G$ be a finite group and  $p$ be prime. Then $c(\mathcal{RP}^*(G))=c(\mathcal{P}^*(G))$ if and only if either $G\cong \mathbb Z_n$, where $n\neq p$ or $G$ has no maximal cyclic subgroup of order $p\, (\neq 2)$.
\end{cor}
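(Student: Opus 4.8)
The plan is to read off the equivalence directly from the formula in Theorem~\ref{connected}, namely
\[
c(\mathcal{RP}^*(G)) = c(\mathcal{P}^*(G)) + \sum_{p\mid |G|} r(p-2),
\]
so that $c(\mathcal{RP}^*(G)) = c(\mathcal{P}^*(G))$ holds if and only if the correction term $\sum_{p\mid |G|} r(p-2)$ vanishes. First I would recall, straight from the statement of Theorem~\ref{connected}, the three cases for the coefficient $r = r(p)$: $r=0$ when $G\cong\mathbb{Z}_n$ with $n\neq p$; $r=1$ when $G\cong\mathbb{Z}_p$; and otherwise $r$ equals the number of maximal cyclic subgroups of prime order $p$ in $G$. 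So the argument splits along the case $G$ cyclic versus $G$ non-cyclic.

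In the cyclic case, if $G\cong\mathbb{Z}_n$ with $n$ not prime, then $r(p)=0$ for every prime $p$, the sum is empty/zero, and the two component counts agree; conversely if $G\cong\mathbb{Z}_p$ then the unique prime contributes $r=1$ and the sum equals $p-2$, which is $0$ precisely when $p=2$, i.e.\ $G\cong\mathbb{Z}_2$ — but $\mathbb{Z}_2$ is covered by the clause ``$G\cong\mathbb{Z}_n$ with $n\neq p$'' only in the degenerate reading; cleaner is to note $\mathbb{Z}_2$ has no maximal cyclic subgroup of order $p\neq 2$, so it falls under the second alternative of the corollary. In the non-cyclic case, $r(p)$ is the number $m_p$ of maximal cyclic subgroups of prime order $p$, each term $m_p(p-2)$ is nonnegative, and the whole sum is zero if and only if every term is zero, i.e.\ for each prime $p\mid|G|$ either $m_p=0$ or $p=2$. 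Equivalently: $G$ has no maximal cyclic subgroup of order $p$ for any odd prime $p$ — which is exactly ``$G$ has no maximal cyclic subgroup of order $p\,(\neq 2)$''.

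I would then assemble these observations: $c(\mathcal{RP}^*(G))=c(\mathcal{P}^*(G))$ $\iff$ $\sum_{p\mid|G|}r(p)(p-2)=0$ $\iff$ either $G\cong\mathbb{Z}_n$ with $n\neq p$ (so all $r(p)=0$), or $G$ has no maximal cyclic subgroup of odd prime order (so every surviving term $m_p(p-2)$ with $p$ odd has $m_p=0$, while the $p=2$ term contributes $0$ automatically). One should double-check that these two alternatives are exactly the hypotheses in the corollary and that the case $\mathbb{Z}_p$ is correctly excluded when $p$ is odd and included (via the second alternative) when $p=2$; that bookkeeping is the only subtlety, since the heavy lifting is already done in Theorem~\ref{connected}. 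There is no real obstacle here — the proof is a short deduction from the displayed formula, with the only care needed being the precise placement of the small groups $\mathbb{Z}_2$ and $\mathbb{Z}_p$ among the three regimes defining $r$.
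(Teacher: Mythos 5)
Your proposal is correct and matches the paper's intent exactly: the corollary is stated there as an immediate consequence of Theorem~\ref{connected} with no written proof, and your deduction --- that the correction term $\sum_{p\mid |G|} r(p-2)$ is a sum of nonnegative terms vanishing precisely in the two stated alternatives --- is the intended argument. Your careful placement of $\mathbb{Z}_2$ under the second alternative (it has no maximal cyclic subgroup of odd prime order) and the exclusion of $\mathbb{Z}_p$ for odd $p$ is exactly the right bookkeeping.
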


In the next result, we obtain the diameter of the (proper) reduced power graph of a finite group which shows that there is a close relationship between the diameter of the proper power graph of a finite group and the diameter of the proper reduced power graph of that group.

\begin{thm}\label{diam}
	Let $G$ be a finite group and $p$ be a prime. Then we have the following:
	\begin{enumerate}[\normalfont(1)]
		\item  $diam(\mathcal{RP}(G))=\begin{cases}
		1, &\text{if~} G\cong \mathbb Z_2;\\
		2, &\text{otherwise}.
		\end{cases}$
		
		\item  $diam(\mathcal{RP}^*(G))=\begin{cases}
		2, &\text{if~} G\cong \mathbb Z_{p^n}, n\geq 2;\\
		\infty, &\text{if~} G\cong \mathbb Z_{p}.\\
		diam(\mathcal{P}^*(G)), & \text{otherwise~}
		\end{cases}$	
	\end{enumerate}	 
\end{thm}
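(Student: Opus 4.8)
The plan is to treat the two parts separately, handling $\mathcal{RP}(G)$ first since it is the easier case, and then exploiting the relation $\mathcal{RP}(G)=K_1+\mathcal{RP}^*(G)$ and the structural comparison with $\mathcal{P}^*(G)$ for the second part. For part (1): if $G\cong\mathbb Z_2$ then $\mathcal{RP}(G)\cong K_2$, giving diameter $1$. Otherwise $|G|\ge 3$, and the identity $e$ satisfies $\langle e\rangle\subset\langle x\rangle$ for every non-trivial $x$, so $e$ is adjacent to all other vertices; hence any two vertices lie at distance at most $2$ via $e$. To rule out diameter $1$ one only needs two non-adjacent vertices, e.g. two distinct elements of the same prime order or two generators, which exist whenever $|G|\ge 3$; a short case check ($G$ cyclic of prime order, cyclic prime-power, or anything else) covers this. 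So $diam(\mathcal{RP}(G))=2$ in all remaining cases.

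For part (2), I would split along the three cases in the statement. If $G\cong\mathbb Z_p$, then $\mathcal{RP}^*(G)$ is totally disconnected (every non-trivial element generates $G$, so no proper containment of cyclic subgroups occurs), hence $diam=\infty$. If $G\cong\mathbb Z_{p^n}$ with $n\ge 2$, I would invoke the explicit structure $\mathcal{RP}^*(\mathbb Z_{p^n})\cong K_{p-1,p(p-1),\dots,p^{n-1}(p-1)}$ from \eqref{str of pm}: any element of order $p$ generates the unique subgroup of order $p$, which is contained in $\langle x\rangle$ for every non-trivial $x$, so a fixed element of order $p$ is adjacent to everything, giving $diam\le 2$; since two generators are non-adjacent, $diam=2$. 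For the generic case I want to show $diam(\mathcal{RP}^*(G))=diam(\mathcal{P}^*(G))$. The inequality $diam(\mathcal{RP}^*(G))\ge diam(\mathcal{P}^*(G))$ is immediate because $\mathcal{RP}^*(G)$ is a spanning subgraph of $\mathcal{P}^*(G)$ (so distances can only increase) — one must also note that the excluded cases $\mathbb Z_p$, $\mathbb Z_{p^n}$ are exactly where a component of $\mathcal{P}^*(G)$ can shrink to isolated vertices in $\mathcal{RP}^*(G)$, which by Theorem~\ref{connected RPG} and Theorem~\ref{connected} cannot happen otherwise; in the generic case $\mathcal{RP}^*(G)$ and $\mathcal{P}^*(G)$ have the same components on the non-isolated part, and connectedness is preserved.

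The reverse inequality $diam(\mathcal{RP}^*(G))\le diam(\mathcal{P}^*(G))$ is the crux and the step I expect to be the main obstacle. Here the idea is: given $x,y$ adjacent in $\mathcal{P}^*(G)$, i.e. $\langle x\rangle\subseteq\langle y\rangle$ (say), either $\langle x\rangle\subset\langle y\rangle$, in which case $x,y$ are already adjacent in $\mathcal{RP}^*(G)$, or $\langle x\rangle=\langle y\rangle$. In the latter case one replaces the edge $xy$ in a geodesic by a detour: since $G$ is not cyclic of prime-power order (the generic case), the cyclic group $\langle x\rangle$ is a proper subgroup, and one wants to argue that $\langle x\rangle$ is not maximal cyclic of prime order — so it is contained in a strictly larger cyclic subgroup $\langle z\rangle$ or it has a proper nontrivial subgroup $\langle w\rangle$, letting us route $x-z-y$ or $x-w-y$ of length $2$ entirely in $\mathcal{RP}^*(G)$. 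The delicate point is that such a detour must not lengthen the path beyond $diam(\mathcal{P}^*(G))$: one has to check that every equivalence class $\langle x\rangle$ appearing along a geodesic in $\mathcal{P}^*(G)$ can be "entered and exited" through a single representative, or argue directly that for $u,v$ in the same $\mathcal{P}^*(G)$-component the $\mathcal{RP}^*(G)$-distance is bounded by the $\mathcal{P}^*(G)$-distance by a careful induction on the geodesic, using at each same-generator edge that we can pass through a common strictly-smaller or strictly-larger cyclic subgroup without revisiting vertices. I would isolate this as the main lemma of the proof and verify it by the case analysis on whether $\langle x\rangle$ admits a proper larger cyclic overgroup within the component, noting that it always does unless $\langle x\rangle$ is maximal cyclic of prime order, which in the generic (non-$\mathbb Z_p$, non-$\mathbb Z_{p^n}$) setting produces only isolated vertices already excluded from the diameter computation.
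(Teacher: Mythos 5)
Your proposal is correct and follows essentially the same route as the paper: the identity's universal adjacency for part (1), the $\mathbb Z_{p}$ and $\mathbb Z_{p^n}$ cases via the totally disconnected and complete multipartite structures, and for the generic case the observation that a geodesic of length at least $2$ in $\mathcal{P}^*(G)$ has pairwise distinct cyclic subgroups along it and hence survives unchanged in $\mathcal{RP}^*(G)$. The one place you go beyond the paper is the explicit detour argument for pairs $x,y$ with $\langle x\rangle=\langle y\rangle$ (adjacent in $\mathcal{P}^*(G)$ but not in $\mathcal{RP}^*(G)$), a step the paper's own proof leaves implicit; your case split (composite order gives a common strictly smaller neighbour, prime order gives either a strictly larger cyclic overgroup or an isolated vertex forcing both diameters to be $\infty$) correctly closes it.
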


\begin{proof}
	(1) $diam(\mathcal{RP}(G))=1$ if and only if $G$ is complete; and this holds only when $G\cong \mathbb Z_2$, by \cite[Corollary 2.8]{raj RPG 2017}. If $G$ is other than $\mathbb Z_2$, then $diam(\mathcal{RP}(G))=2$, since the identity element of $G$ is adjacent to all the elements in $\mathcal{RP}(G)$.
	
	(2) Assume that $diam(\mathcal{P}^*(G))=1$. Then $\mathcal{P}^*(G)$ is complete and so by \cite[Corollary 9]{curtin 2015}, $G\cong \mathbb Z_{p^n}$, $n\geq 1$. If $n=1$, then $\mathcal{RP}^*(G)\cong \overline{K}_{p-1}$ and so $diam(\mathcal{RP}^*(G))=\infty$. If $n\geq 2$, then, $\mathcal{RP}^*(G)$ is complete $n$-partite and so $diam(\mathcal{RP}^*(G))=2$. Next, we assume that $diam(\mathcal{P}^*(G))=m$ $(m\geq 2)$. Let $x$, $y$ be vertices of $\mathcal{P}^*(G)$  such that $d_{\mathcal{P}^*(G)}(x,y)=r$, $r>1$. Let $P(x,y):=x-x_1-x_2-\cdots-x_{r-1}-y$ be a shortest $x-y$ path in $\mathcal{P}^*(G)$. Then $\left\langle x_i\right\rangle \neq \left\langle x_j\right\rangle $, for $i\neq j$. So $P(x,y)$ is also a shortest path in $\mathcal{RP}^*(G)$ and so $d_{\mathcal{P}^*(G)}(x,y)=r$.  Therefore, $diam(\mathcal{RP}^*(G))=diam(\mathcal{P}^*(G))$. If $diam(\mathcal{P}^*(G))=\infty$, then $\mathcal{P}^*(G)$ is disconnected. Since $\mathcal{RP}^*(G)$ is subgraph of  $\mathcal{P}^*(G)$, it follows that $diam(\mathcal{RP}^*(G))=\infty$. So $diam(\mathcal{RP}^*(G))=diam(\mathcal{P}^*(G))$, in this case also. 
\end{proof}

The following result is an immediate consequence of Theorem~\ref{diam} and \cite[Theorem 2.8(2)]{doost}.
\begin{cor}\label{diam 2}
	Let $G$ be a finite group. Then the following are equivalent:
	\begin{enumerate}[\normalfont (a)]
		\item 	$diam(\mathcal{RP}^*(G))=2$;
		
		\item $diam(\mathcal{RP}(G))=diam(\mathcal{RP}^*(G))$;
		
		\item $G$ is nilpotent  and its sylow subgroup are cyclic or generalized quaternion 2-group and $G\ncong \mathbb Z_p$, where $p$ is a prime.
	\end{enumerate}	
\end{cor}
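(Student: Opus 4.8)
The plan is to prove Corollary~\ref{diam 2} by combining Theorem~\ref{diam} with the cited characterization \cite[Theorem 2.8(2)]{doost} of groups whose proper power graph has diameter at most~$2$. First I would establish the equivalence of (a) and (c). By Theorem~\ref{diam}(2), $diam(\mathcal{RP}^*(G))=2$ happens precisely in two situations: when $G\cong\mathbb Z_{p^n}$ with $n\geq 2$, or when $G\ncong\mathbb Z_{p^m}$ for any prime power and $diam(\mathcal{P}^*(G))=2$. The first case is subsumed under nilpotent groups with cyclic Sylow subgroups and $G\ncong\mathbb Z_p$. For the second case, I would invoke \cite[Theorem 2.8(2)]{doost}, which states that $\mathcal{P}^*(G)$ has diameter~$2$ exactly when $G$ is nilpotent with all Sylow subgroups cyclic or generalized quaternion, and $G$ is not cyclic of prime power order; intersecting this description with the hypothesis of the second case of Theorem~\ref{diam}(2) (namely $G$ is not a cyclic $p$-group, which rules out $\mathbb Z_p$ but keeps everything else) yields exactly condition (c). Conversely, starting from (c) one checks that either $G\cong\mathbb Z_{p^n}$ with $n\geq 2$, giving $diam(\mathcal{RP}^*(G))=2$ directly, or $G$ is a non-$p$-power nilpotent group with cyclic or generalized quaternion Sylow subgroups, for which \cite[Theorem 2.8(2)]{doost} gives $diam(\mathcal{P}^*(G))=2$ and hence $diam(\mathcal{RP}^*(G))=2$ by Theorem~\ref{diam}(2).

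Next I would handle the equivalence of (a) and (b). By Theorem~\ref{diam}(1), $diam(\mathcal{RP}(G))$ equals $1$ if $G\cong\mathbb Z_2$ and $2$ otherwise; in particular, for any group not equal to $\mathbb Z_2$ we have $diam(\mathcal{RP}(G))=2$. If (a) holds, then $diam(\mathcal{RP}^*(G))=2$, and since condition (c)—already shown equivalent to (a)—excludes $G\cong\mathbb Z_p$ and in particular $G\cong\mathbb Z_2$, we get $diam(\mathcal{RP}(G))=2=diam(\mathcal{RP}^*(G))$, so (b) holds. For the converse, suppose (b) holds. The only values $diam(\mathcal{RP}^*(G))$ can take are $2$, $\infty$, or $diam(\mathcal{P}^*(G))$; I would argue that (b) forces it to be $2$: if $G\cong\mathbb Z_2$ then $diam(\mathcal{RP}(G))=1$ but $\mathcal{RP}^*(\mathbb Z_2)$ is a single vertex with $diam=0$, contradicting (b); if $G\cong\mathbb Z_p$ for $p\geq 3$ then $diam(\mathcal{RP}^*(G))=\infty\neq 2=diam(\mathcal{RP}(G))$; and if $diam(\mathcal{RP}^*(G))=\infty$ in any other case (i.e., $\mathcal{RP}^*(G)$ disconnected), again this cannot equal $diam(\mathcal{RP}(G))=2$. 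Hence the only way (b) can hold is $diam(\mathcal{RP}^*(G))=2$, which is (a).

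The step I expect to require the most care is the precise bookkeeping in the (a)$\Leftrightarrow$(c) direction: one must verify that the family $\{\mathbb Z_{p^n}:n\geq 2\}$ arising from the first branch of Theorem~\ref{diam}(2) is exactly the set of cyclic $p$-groups that are \emph{not} $\mathbb Z_p$, and that these are already covered by the statement in (c) (cyclic groups are nilpotent with a single cyclic Sylow subgroup), so no group is double-counted or omitted when the two branches are merged. One also has to confirm that \cite[Theorem 2.8(2)]{doost} indeed has the exclusion "$G$ not cyclic of prime power order" (or equivalently that in the second branch the extra hypothesis $G\ncong\mathbb Z_{p^m}$ is automatically compatible), so that the union of the two branches of Theorem~\ref{diam}(2) matches condition (c) on the nose, including the sole exclusion $G\ncong\mathbb Z_p$. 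The rest is a routine case check using the already-established Theorem~\ref{diam}.
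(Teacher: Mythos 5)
Your proposal is correct and follows exactly the route the paper intends: the paper gives no written proof, stating only that the corollary is an immediate consequence of Theorem~\ref{diam} and \cite[Theorem 2.8(2)]{doost}, and your argument is precisely the careful unpacking of that combination (merging the $\mathbb Z_{p^n}$, $n\geq 2$, branch with the $diam(\mathcal{P}^*(G))=2$ branch, and using Theorem~\ref{diam}(1) for the equivalence with (b)). The bookkeeping you flag as delicate is handled correctly.
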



%
%


\begin{rem}
	\normalfont A. R. Moghaddamfar and S. Rahbariyan \cite{moghaddamfar PG 2014}, B. Curtin et al  \cite{curtin 2015} and  A. Doostabadi et al  \cite{doost} proved several results on the connectedness and the diameter of the proper power graphs of finite groups. As a consequence of Theorems~\ref{connected} and \ref{diam}(2), the same results are also hold for the proper reduced  power graphs of finite groups, except for the cyclic group of prime order in the case of connectedness, and cyclic $p$-groups in the case of diameter.
	
	Moreover,  the number of components of proper power graphs of some family of finite groups
	including nilpotent groups, groups with a nontrivial partition,  symmetric groups, and
	alternating groups were studied in \cite{doost}. In view of Theorem~\ref{connected}, for a given group, the number of components of its proper power graph may differ from the number of components of  its proper reduced power graph. Finding the number of components of proper reduced power graph of a finite group involves in finding its number of maximal subgroups of prime order. Since the later corresponds to the number of components of its proper power graph containing exactly one equivalence class of $\sim$, so from the process of finding the number of components of proper power graph,  one can easily obtain the number of components of its proper reduced power graph and vice versa.
\end{rem}

\section*{Acknowledgment}
The second author gratefully acknowledges University Grants Commission (UGC), Govt. of India  for granting Rajiv Gandhi National Fellowship for this research work.

\end{document}